\newtheorem{thm}{Theorem}[subsection]
\newtheorem{prop}[thm]{Proposition}
\newtheorem{lem}[thm]{Lemma}
\newtheorem{lem-def}[thm]{Lemma-Definition}
\newtheorem{cor}[thm]{Corollary}
\newtheorem{proposition}[thm]{Proposition}
\newtheorem{conjecture}[thm]{Conjecture}
\theoremstyle{definition}
\newtheorem{remark}[thm]{Remark}
\theoremstyle{definition}
\newtheorem{definition}[thm]{Definition}
\newtheorem{ex}[thm]{Example}
\newtheorem{construction}[thm]{Construction}
\newtheorem{rmk}[thm]{Remark}
\theoremstyle{definition}
\newtheorem{dfn}[thm]{Definition}
\numberwithin{equation}{subsection}
\newcommand{\quash}[1]{}  
\newcommand{\nc}{\newcommand}
\nc{\on}{\operatorname}
\DeclareMathOperator{\Ker}{Ker}
\def\bba{\mathbf{a}}
\def\FF{\mathbb{F}}
\def\ZZ{\mathbb{Z}}
\def\calE{\mathcal{E}}
\def\calF{\mathcal{F}}
\def\calH{\mathcal{H}}
\newcommand{\out}{\mathrm{out}}
\newcommand{\frakg}{{\mathfrak g}}
\newcommand{\frakp}{{\mathfrak p}}
\newcommand{\scrS}{\mathscr{S}}
\newcommand{\bA}{{\mathbb A}}
\newcommand{\bC}{{\mathbb C}}
\newcommand{\bD}{{\mathbb D}}
\newcommand{\bF}{{\mathbb F}}
\newcommand{\bG}{{\mathbb G}}
\newcommand{\bH}{{\mathbb H}}
\newcommand{\bM}{{\mathbb M}}
\newcommand{\bN}{{\mathbb N}}
\newcommand{\bP}{{\mathbb P}}
\newcommand{\bQ}{{\mathbb Q}}
\newcommand{\bR}{{\mathbb R}}
\newcommand{\bV}{{\mathbb V}}
\newcommand{\bX}{{\mathbb X}}
\newcommand{\bZ}{{\mathbb Z}}
\newcommand{\mA}{{\mathcal A}}
\newcommand{\mC}{{\mathcal C}}
\newcommand{\mD}{{\mathcal D}}
\newcommand{\mE}{{\mathcal E}}
\newcommand{\mF}{{\mathcal F}}
\newcommand{\mG}{{\mathcal G}}
\newcommand{\mH}{{\mathcal H}}
\newcommand{\mL}{{\mathcal L}}
\newcommand{\mO}{{\mathcal O}}
\newcommand{\mW}{{\mathcal W}}
\newcommand{\mX}{{\mathcal X}}
\newcommand{\mZ}{{\mathcal Z}}
\nc{\Tate}{\mathrm{Tate}}
\nc{\al}{{\alpha}} \nc{\be}{{\beta}} \nc{\ga}{{\gamma}}
\nc{\ve}{{\varepsilon}} \nc{\Ga}{{\Gamma}} \nc{\la}{{\lambda}}
\nc{\La}{{\Lambda}}
\nc{\ad}{{\on{ad}}}
\newcommand{\Ad}{{\on{Ad}}}
\nc{\aff}{{\on{aff}}}
\nc{\Aff}{{\mathbf{Aff}}}
\newcommand{\Aut}{{\on{Aut}}}
\nc{\Bun}{{\on{Bun}}}
\newcommand{\cha}{{\on{char}}}
\nc{\Coh}{{\on{Coh}}}
\nc{\der}{{\on{der}}}
\nc{\diag}{{\on{diag}}}
\nc{\dR}{{\on{dR}}}
\newcommand{\End}{{\on{End}}}
\nc{\Fl}{{\calF\ell}}
\newcommand{\Gal}{{\on{Gal}}}
\newcommand{\Gr}{{\on{Gr}}}
\newcommand{\Hom}{{\on{Hom}}}
\newcommand{\id}{{\on{id}}}
\nc{\Id}{{\on{Id}}}
\nc{\Ind}{{\on{Ind}}}
\nc{\inv}{{\on{Inv}}}
\nc{\Iso}{{\on{Isom}}}
\nc{\Nm}{{\on{Nm}}}
\nc{\pf}{{\on{pf}}}
\newcommand{\pr}{{\on{pr}}}
\nc{\rec}{{\on{rec}}}
\nc{\res}{{\on{res}}}
\nc{\Iw}{\on{Iw}}
\newcommand{\Res}{{\on{Res}}}
\DeclareMathOperator{\Spec}{Spec}
\nc{\tr}{{\on{tr}}}
\newcommand{\loc}{\mathrm{loc}}
\newcommand{\Tr}{{\on{Tr}}}
\newcommand{\Shv}{\mathrm{Shv}}
\nc{\ev}{\on{ev}}
\nc{\coev}{\on{coev}}
\nc{\un}{\on{un}}
\nc{\coun}{\on{coun}}
\newcommand{\GL}{{\on{GL}}}
\nc{\GSp}{{\on{GSp}}} \nc{\GU}{{\on{GU}}} \nc{\SL}{{\on{SL}}}
\nc{\SU}{{\on{SU}}} \nc{\SO}{{\on{SO}}}
\nc{\Ql}{{\overline{\bQ}_\ell}}
\nc{\fg}{\frakg}
\nc{\fp}{\frakp}
\nc{\pFr}{{_\mathrm{p}\sigma}}
\nc{\rat}{\overline{\bQ}}
\nc{\triv}{{\bf{1}}}
\newcommand{\longto}{\longrightarrow}
\newcommand{\wGr}{{\widetilde{\Gr}}}
\newcommand{\et}{\mathrm{et}}
\nc{\dm}{/\!\!/}
\def\xcoch{\mathbb{X}_\bullet}
\def\xch{\mathbb{X}^\bullet}
\nc{\wt}{\mathrm{wt}}
\nc{\Sat}{\on{Sat}}
\nc{\Sh}{\on{Sh}}
\nc{\Sht}{\on{Sht}}
\nc{\wSh}{\widetilde{\Sht}}
\nc{\Sph}{\on{Sph}}
\nc{\Fr}{\on{Frob}}
\nc{\Fp}{{^\sigma\mE}}
\nc{\und}{\underline}
\nc{\mmu}{{\mu_\bullet}}
\nc{\nnu}{{\nu_\bullet}}
\nc{\Hk}{{\on{Hk}}}
\nc{\lhk}{\Hk^{\on{loc}}}
\newcommand{\IC}{\on{IC}}
\nc{\bb}{{\mathbf{b}}}
\nc{\bp}{{\mathbf{p}}}
\nc{\MV}{{\bM\bV}}
\nc{\Mp}{{\on{Mp}}}
\newcommand{\leftone}{\leftarrow}
\newcommand{\rightone}{\to}
\nc{\bfB}{{\mathbf{B}}}
\newcommand{\rmP}{\mathrm{P}}
\begin{document}
\title{Geometric Satake, categorical traces, and arithmetic of Shimura varieties}
\author{Xinwen Zhu}
\address{Xinwen Zhu, Department of Mathematics, Caltech, 1200 East California Boulevard, Pasadena, CA 91125.}
\email{xzhu@caltech.edu}\date{\today}
\thanks{
The author was partially supported by NSF grant DMS-1602092 and the Sloan Fellowship.}
\begin{abstract}
We survey some recent work on the geometric Satake of $p$-adic groups and its applications to some arithmetic problems of Shimura varieties. We reformulate a few constructions appeared in the previous works more conceptually. 
\end{abstract}
\date{}
\maketitle

\tableofcontents

\section{Introduction}
The geometric Langlands program, initiated by Drinfeld and Laumon, arose as a continuation and generalization of  Drinfeld's approach to the Langlands correspondence for $\GL_2$ over a global function field. In the geometric theory, the fundamental object to study shifts from the space of automorphic forms of a reductive group $G$ to the category of sheaves on the moduli space of $G$-bundles on an algebraic curve. 

In recent years, the geometric Langlands program has found fruitful applications to the classical Langlands program and some related arithmetic problems. Traditionally, one applies Grothendieck's sheaf-to-function dictionary to
``decategorify" the category of sheaves studied in geometric theory to obtain the space of functions studied in arithmetic theory. This was used in Drinfeld's approach to the Langlands correspondence for $\GL_2$ (\cite{Dr}), as mentioned above. Another celebrated example is Ng\^o's proof the the fundamental lemma (\cite{Ng}). 
In this expository article, we explain \emph{another} passage from the geometric theory to the arithmetic theory, again via a trace construction, but of different nature. The abstract version of this method is discussed in \S \ref{S: cat trace}. We should mention that this idea already appeared  implicitly in V. Lafforgue's work on Langlands correspondence over global function fields (\cite{La}), and has also been explained in an informal article by Gaitsgory (\cite{Ga}).

One of the cornerstones of the geometric Langlands program is the geometric Satake equivalence, which establishes a tensor equivalence between the category of certain perverse sheaves on the affine Grassmannian of $G$ and the category of finite dimensional algebraic representations of the Langlands dual group $\hat G$. This is a vast generalization of the classical Satake isomorphism (via the sheaf-to-function dictionary), and can be regarded as a conceptual construction of the Langlands dual group.
For reductive groups over equal characteristic local fields, the geometric Satake is a result of works of Lusztig, Ginzburg, Beilinson-Drinfeld and Mirkovi\'{c}-Vilonen (\cite{Lu,Gi1,BD,MV}), which has been found many applications in representation theory, mathematical physics, and (arithmetic) algebraic geometry. The recent work \cite{Zh1} also established this equivalence for $p$-adic groups,  which will be  reviewed in \S \ref{S: GSat}. We will take the opportunity to discuss a motivic version of the geometric Satake in \S \ref{SS: MotSat}, which in some sense is a toy model for our arithmetic applications. Then  in \S \ref{S: cattrSat}, we apply the above mentioned abstract trace construction to the geometric Satake of $p$-adic groups, which leads to some applications to the study of the cohomology and cycles of Shimura varieties, as discussed in \S \ref{S:app to Sh}. These last two sections are based on the work \cite{XZ}.

\medskip

\noindent\bf Acknowledgement. \rm The article originates as a survey for the author's talk in the Current Developments in Mathematics 2016 conference. The author would like to thank organizers for the invitation. Section \ref{S: cattrSat} and \ref{S:app to Sh} are based on a joint work with Liang Xiao. The author would like to thank him for the collaboration. In addition, the author would like to thank Pavel Etingof, Dennis Gaitsgory and Yifeng Liu for useful discussions.

\section{Geometric Satake for $p$-adic groups}
\label{S: GSat}
In this section, we review the work \cite{Zh1}. We also take the opportunity to reformulate the geometric Satake in a motivic way. This makes some ideas behind the constructions in \cite{Zh1} more transparent and also gives a toy model of the arithmetic applications in \cite{XZ}, which will be discussed in later sections.
\subsection{Review of the geometric Satake for $p$-adic groups}
\label{S: affGrass}
We start with a brief review the classical Satake isomorphism. 

Let $F$ be a non-archimidean local field with $\mO$ its ring of integers and $k=\bF_q$ its residue field. I.e. $F$ is a finite extension of $\bQ_p$ or is isomorphic to $\bF_q((\varpi))$.
Let $\sigma$ be the geometric $q$-Frobenius of $k$.

We will assume that $G$ is a connected reductive group over $\mO$. 
Then $G(F)$ is topological group, with a basis of open neighborhoods of the unit element given by subgroups of $K:=G(\mO)$ of finite index. 
For example, if $G=\GL_2$, $F=\bQ_p$ and $\mO=\bZ_p$, let 
$$\Ga(p^n)=\big\{\begin{pmatrix}a& b\\ c& d\end{pmatrix}\mid a=d=1 \mod p^n,\  b=c=0 \mod p^n\big\}$$ 
denote the $n$th principal congruence subgroup. Then 
$$\GL_2(\bZ_p)=\Ga(1)\supset\Ga(p)\supset\Ga(p^2)\supset\cdots$$ form a basis of open neighbourhood of the identity matrix in $\GL_2(\bQ_p)$.

With this topology, $G(F)$ is locally compact, and $K$ is an open compact subgroup. Therefore, there is a unique Haar measure such that the volume of $K$ is $1$.
The classical spherical Hecke algebra is the space of compactly supported $G(\mO)$-bi-invariant $\bC$-valued functions on $G(F)$, equipped with the convolution product
\begin{equation}
\label{E:conv prod alge}
(f*g)(x)=\int_{G(F)}f(y)g(y^{-1}x)dy,
\end{equation}
Note that if both $f$ and $g$ are $\bZ$-valued, so is $f*g$. Therefore, the subset of $\bZ$-valued functions form a $\bZ$-subalgebra, which we denote by $H_G$.

It follows from the definition that the characteristic functions of the double cosets $\{KgK\mid g\in G\}$ form a $\bZ$-basis of $H_G$. We recall an explicit parameterization of these double cosets.

Let $T\subset G$ be a maximal torus defined over $\mO$. Let 
$$\xcoch(T)=\Hom(\bG_m,T)$$ denote the group of one-parameter subgroups of $T$, on which $\sigma$ acts.
Elements in $\xcoch(T)$ are usually called cocharacters. A $\sigma$-invariant cocharacter $\la$ is defined over $F$ and induces $\la: F^\times\to T(F)\subset G(F)$, and therefore for a choice of uniformizer $\varpi\in F$, we obtain a map
\begin{equation*}\label{cartan1}
\xcoch(T)^\sigma\to G(F), \quad \la\mapsto \la(\varpi).
\end{equation*}
The Cartan decomposition asserts that this map induces a canonical bijection
\begin{equation}\label{cartan2}
\inv:  K\backslash G(F)/K\cong \xcoch(T)^{\sigma}/W_0 
\end{equation}
which is independent of the choice of the uniformizer. Here $W=N_G(T)/T$ is the Weyl group of $T$, on which $\sigma$ acts, and $W_0=W^{\sigma}$.

\begin{ex}\label{coweightGLn}
If $G=\GL_n$ over $\bQ_p$, we can choose $T\subset G$ be the group of diagonal matrices. Then there is a canonical identification $\xcoch(T)\simeq \bZ^n$, on which $\sigma$ acts trivially. The above map $\xcoch(T)\to G(F)$ is then given by 
\[\mu=(m_1,m_2,\ldots,m_n)\mapsto \mu(p)=\begin{pmatrix}p^{m_1}&&&\\&p^{m_2}&&\\ &&\ddots&\\ &&& p^{m_n}\end{pmatrix}.\]
The Weyl group in this case is the permutation group $S_n$ of $n$ letters acting on $\bZ^n$ by permuting direct factors. Then every element in $\xcoch(T)/W$ admits a unique representative in
$$\xcoch(T)^+=\{\mu=(m_1,\ldots,m_n)\in \bZ^n\mid m_1\geq m_2\geq\cdots\geq m_n\}.$$
\end{ex}

We fix a coefficient ring $E$, which for simplicity is assumed to be an algebraically closed field of characteristic zero.
From $G$, one can construct another split connected reductive group $\hat G$ over $E$, called the Langlands dual group. The original definition of $\hat G$ is combinatoric, and relies on the the classification of connected reductive groups. A more conceptual construction is via the geometric Satake presented below (see Remark \ref{R: birth of the dual group}) so we do not recall the original definition of $\hat G$ here. For the moment, it is enough to know that $\hat G$ is equipped with a Borel subgroup $\hat B$, a maximal torus $\hat T\subset \hat B$ whose character group $\Hom(\hat T,\bG_m)$ is equal to $\xcoch(T)$, and that and the set of isomorphism classes of irreducible representations of $\hat G$ are parameterized by $\xcoch(T)/W$. In addition, the action of $\sigma$ induces a canonical finite order automorphism of $(\hat G,\hat B, \hat T)$.
Here is a list of examples to keep in mind (ignoring $(\hat B,\hat T)$ and the action of $\sigma$).

\medskip

\begin{center}
\begin{tabular}{| c | c | c | c | c | c |}
\hline 
$G$ & $\GL_n$ & $\on{SL}_n$ & $\on{SO}_{2n+1}$ & $\on{SO}_{2n}$ & $E_8$  \\ \hline 
$\hat G$ & $\GL_n$ & $\on{PGL}_{n}$ & $\on{Sp}_{2n}$ & $\on{SO}_{2n}$ & $E_8$ \\ \hline 
\end{tabular}
\end{center}

\medskip

Now consider the $\sigma$-twisted conjugation of $\hat G$ on itself given by
\begin{equation}
\label{E:twist conj}
c_\sigma:\hat G\times \hat G\to\hat G, \quad (g,h)\mapsto c_\sigma(g)(h):=gh\sigma(g)^{-1},
\end{equation}
which is equivalent to the usual conjugation action of $\hat G$ on the coset $\hat G\sigma\subset \hat G\rtimes\langle\sigma\rangle=:{^L}G$. Let $R({^L}G)$ denote the Grothendieck $K$-ring of the category $\on{Rep}({^L}G)$ of finite dimensional algebraic representations of ${^L}G$.
Let $\mathbf J= E[\hat G\sigma]^{\hat G}$ denote the space of $\sigma$-twisted conjugate invariant functions on $\hat G$. For a representation $V$ of ${^L}G$, let $\chi_V$ be the restriction of the character of $V$ to $\hat G\sigma$.
Then the map $R({^L}G)\otimes E\to \mathbf J$ sending $[V]$ to $\chi_V$ is surjective.

We fix a choice of $q^{1/2}\in E$.
The classical Satake isomorphism (or rather, Langlands' reinterpretation) establishes a canonical isomorphism
\begin{equation}\label{Sat isom}
\Sat^{cl}: H_G\otimes E\cong \mathbf J.
\end{equation}
This can be deduced via Grothendieck's sheaf-to-function dictionary from the geometric Satake which will be discussed below. So we do not discuss the proof here (but see \cite[\S 5.6]{Zh2} and \cite[\S 3.5]{XZ} for details). Instead, we give an example.

\begin{ex}
Let $G=\GL_2$. Then $\hat{G}=\GL_2$ on which $\sigma$ acts trivially so ${^L}G=\hat{G}$. As an algebra 
$$H_{\GL_2}=\bZ[T_p,S_p^{\pm 1}],$$
where $T_p$ is the characteristic function of $K\begin{pmatrix}p&\\&1\end{pmatrix}K$, and $S_p$ is the characteristic function of $\begin{pmatrix}p&\\&p\end{pmatrix}K$.
On the other hand, we know that 
$$R(\GL_2)=\bZ[\tr,\det{^{\pm 1}}],$$
where $\tr$ is the usual trace function (the character of the standard representation $\on{Std}$), and $\det$ is the usual determinant function (the character of $\wedge^2\on{Std}$). Then after choosing $p^{1/2}\in E$, the Satake isomorphism is given by
$$\Sat^{cl}(T_p)=p^{\frac{1}{2}}\cdot \tr,\quad \Sat^{cl}(S_p)=\det.$$
Note that 
\[T_p* T_p= (T_{p^2}+S_p)+pS_p,\quad \tr\cdot \tr= \chi_{\on{sym}^2\on{Std}}+\det,\]
where $T_{p^2}$ is the characteristic function of $K\begin{pmatrix}p^2&\\&1\end{pmatrix}K$, and $\chi_{\on{sym}^2\on{Std}}$ is the trace function of the symmetric square of the standard representation.
It follows that
\begin{equation}\label{upper tr}
\Sat^{cl}(T_{p^2}+S_p)=p\cdot\chi_{\on{sym}^2\on{Std}}.
\end{equation}
\end{ex}
\begin{rmk}
Note that it follows from the definition that both rings in \eqref{Sat isom} have a natural basis labelled by $\xcoch(T)^\sigma/W_0$. I.e. every $\mu\in \xcoch(T)^\sigma/W_0$ gives the characteristic function $1_{K\mu(\varpi)K}\in H_G$, and the function $\chi_{_{V_\mu}}\in \mathbf J$, where $V_\mu$ is the irreducible representation of ${^L}G$ of ``highest weight $\mu$".
However, the above example shows that the isomorphism $\Sat^{cl}$ does not send $1_{K\mu(\varpi)K}$ to $\chi_{_{V_\mu}}$ in general.
\end{rmk}

\begin{rmk}
\label{R: norm Sat}
Of course, it is possible to define an isomorphism $\Sat'^{cl}: H_{\GL_2}\to R(\GL_2)$ sending $T_p$ to $\tr$ (which is defined over $\bZ$ and is independent of the choice of $p^{1/2}$). This is related to the question of normalization of the Satake isomorphism (see \cite[\S 8]{Gr}). More fundamentally, it is related to the \emph{correct} definition of the Langlands dual group. We will not discuss this issue in this article (except Remark \ref{R:Tate twist}). A detailed discussion, from the point of view of the geometric Satake, can be found in \cite[\S 5.5]{Zh2}. 
\end{rmk}

\medskip

Now we explain the geometric approach, where instead of thinking $G(F)$ as a topological group and considering the space of $K$-bi-invariant compactly supported functions on it, we regard $G(F)$ as certain algebro-geometric object and study the category of $K$-bi-equivariant sheaves on it. It is equivalent, but more convenient to regard $G(F)/K$ as an algebro-geometric object and to study the category of $K$-equivariant sheaves on it. More precisely, we will construct an infinite dimensional algebraic variety $\Gr_G$ over $k$, called the affine Grassmannian of $G$, such that $G(F)/K$ is the set of its $k$-points. 
To see why this is possible, let us first analyze some subsets of $G(F)/K$, which can naturally be identified with the set of $k$-points of some algebraic varieties.

\begin{ex}\label{minu Sch}
Assume $G=\GL_n$ over $F$. Then the quotient $\GL_n(F)/K$ can be identified with the set of lattices in $F^n$. Here by a lattice, we mean a finite free $\mO$-submodule $\Lambda$ of $F^n$ such that $\Lambda\otimes_{\mO}F=F^n$. For example, $\La_0:=\mO^n\subset F^n$ is a lattice, usually called the standard lattice. Then the bijection is given by
\begin{equation}\label{coset vs lattice}
\GL_n(F)/K\simeq \big\{\mbox{Lattices in } F^n\big\},\quad gK\mapsto \Lambda:=g\La_0.
\end{equation}

We consider $\omega_i=1^i0^{n-i}\in \bZ^{n}$, regarded as a coweight of $\GL_n$ as in Example \ref{coweightGLn}. Then it is easy to see that 
under the identification \eqref{coset vs lattice}, the set $K\omega_i(\varpi)K/K$ can be identified with the set of lattices $\Lambda\subset F^n$ satisfying
\[\big\{\varpi\mO^n\subset \Lambda \subset \mO^n,\quad  \dim_{k}\mO^n/\Lambda=i\big\}.\]
The map 
$$\Lambda\mapsto (k^n=\mO^n/\varpi\mO^n\to \mO^n/\La)$$
identifies this set with the set of $i$-dimensional quotient spaces of $k^n$ (or equivalently, $(n-i)$-dimensional subspaces of $k^n$). To summarize, we have a canonical bijection between $K\omega_i(\varpi)K/K$ and the set of $k$-points of the Grassmannian variety $\Gr(n-i,n)$ of $(n-i)$-planes in a fixed $n$-dimensional space. It is natural to expect that $\Gr(n-i,n)$ is a subvariety of  $\Gr$.
\end{ex}

\begin{ex}\label{quasiminuSch}
We assume $G=\GL_2$ over $F$. According to the Satake isomorphism (in particular see \eqref{upper tr}), it is reasonable to consider the set 
\begin{equation}\label{Gr2}
K\begin{pmatrix}\varpi^2 & \\ &1 \end{pmatrix}K/K\sqcup \begin{pmatrix}\varpi & \\ &\varpi \end{pmatrix} K/K.
\end{equation} Under the identification \eqref{coset vs lattice}, it can be identified with the set of lattices $\La\subset F^2$ such that 
\begin{equation}\label{Gr2latt}
\{\La\subset \mO^2\mid \on{length}\mO^2/\La=2\}.
\end{equation}
If $F=k((\varpi))$, we may regard $\mO^2/\La$ as a $2$-dimensional quotient space of $k^4=\mO^2/\varpi^2\mO^2$. In addition, multiplication by $\varpi$ induces a nilpotent endomorphism on $\mO^2/\varpi^2\mO^2$, which stabilizes $\mO^2/\La$. It is easy to see that every $2$-dimensional quotient space of $\mO^2/\varpi^2\mO^2$ stable under this nilpotent endomorphism arises in this way. Therefore, \eqref{Gr2} can be identified with the set of $\bF_q$-points of a closed subvariety of $\Gr(2,4)$.

If $F=\bQ_p$, the situation is more complicated since we cannot regard $\mO^2/\La$ as a vector space over $k=\bF_p$. To overcome this difficulty, it is natural to consider another set
\begin{equation}\label{wGr2latt}
\{\La\subset\La'\subset \mO^2\mid \dim\mO^2/\La'=\La/\La'=1\}.
\end{equation}
There is a natural surjective map $m$ from \eqref{wGr2latt} to \eqref{Gr2latt} by forgetting $\La'$. It is an easy exercise to see that $m^{-1}(\La)$ consists of one element unless $\La=p\La_0$.
 
Using a reasoning similar as before, one shows that \eqref{wGr2latt} is identified with the set of $\bF_p$-points of the Hirzebruch surface $\bP(\mO_{\bP^1}(-1)+\mO_{\bP^1}(1))$. In addition, the set $p^{-1}(\La_0)$ form the set of $\bF_p$-points of a $(-2)$-curve on the surface. Blowing down this curve, one obtains a projective variety whose set of $\bF_p$-points is naturally identified with \eqref{Gr2latt}.
\end{ex}

Now we give a precise definition of the affine Grassmannian of $G$.  In the rest of the section, we allow $F$ to be slightly more general. Namely, we will assume that
$F$ is a local field complete with respect to a discrete valuation, with ring of integer $\mO$ and a \emph{perfect} residue field $k$ of characteristic $p>0$ \footnote{In equal characteristic situation, i.e. $\cha F=\cha k$, this assumption is not really necessary. We impose here to have a uniform treatment.}. Let $\varpi\in\mO$ be a uniformizer. 

Recall that a $k$-algebra $R$ is called perfect if the Frobenius endomorphism $R\to R, \ r\mapsto r^p$ is a bijection.
For a perfect $k$-algebra $R$, let $W(R)$ be the ring of ($p$-typical) Witt vectors of $R$. This is a ring whose elements are sequences $(r_0,r_1,r_2,\ldots)\in R^{\bN}$, with the addition and the multiplication given by certain (complicated) polynomials. The projection to the first component $W(R)\to R, \ (r_0,r_1,\ldots)\mapsto r_0$ is a surjective ring homomorphism, with a multiplicative (but \emph{non-additive}) section $R\to W(R), \ r\mapsto [r]=(r,0,0,\ldots)$, called the Teichm\"uller lifting of $r$. If $R$ is a perfect ring, every element in $W(R)$ can be uniquely written as $\sum_{i\geq 0} [a_i]p^i$ so $W(R)$ can be regarded as a ``power series ring in variable $p$ and with coefficients in $R$''.
For example, $W(\bF_p)=\bZ_p$. We define the ring of Witt vectors in $R$ with coefficient in $\mO$ as
\begin{equation}\label{ramified Witt vector}
W_\mO(R):= W(R)\hat\otimes_{W(k)}\mO:= \underleftarrow\lim_n W_{\mO,n}(R),\quad W_{\mO,n}(R):=W(R)\otimes_{W(k)} \mO/\varpi^n.
\end{equation}
In particular, if $R=\bar k$, we sometimes write $W_\mO(\bar k)$ by $\mO_L$ and $W_{\mO}(\bar k)[1/\varpi]$ by $L$.
Note that if $\cha F=\cha k$, then $W_\mO(R)\simeq R[[\varpi]]$, and $r\mapsto [r]$ a ring homomorphism. We sometimes write
\begin{equation}\label{discs}
D_{n,R}=\Spec W_{\mO,n}(R),\quad D_R=\Spec W_\mO(R),\quad D_R^*=\Spec W_\mO(R)[1/\varpi],
\end{equation}
which are thought as families of (punctured) discs parameterized by $\Spec R$.
Given two $G$-torsors $\mE_1$ and $\mE_2$ on $D_R$, a modification from $\mE_1$ to $\mE_2$ is an isomorphism $\mE_1|_{D_R^*}\simeq \mE_2|_{D_R^*}$. We usually denote it by $\mE_1\dashrightarrow\mE_2$ to indicate that the map is not defined over the whole $D_R$. 

We need to work with perfect algebraic geometry. Some foundations can be found in \cite[Appendix A]{Zh1}, \cite{BS} and \cite[\S A.1]{XZ}.
Let $\Aff_k^\pf$ denote the category of perfect $k$-algebras. We define the affine Grassmannian $\Gr_G$ of $G$ over $k$ as a presheaf over $\Aff_k^\pf$,
\begin{equation}
\label{E:affGrass def}
\Gr_G(R)= \left\{(\mE,\beta)\left | \begin{array}{l}\mE \mbox{ is a } G\mbox{-torsor on } D_R\textrm{ and }  \\
 \beta: \mE\dashrightarrow \mE^0 \mbox{ is a modification }\end{array}\right.\right\},
\end{equation}
where $\mE^0$ denotes the trivial $G$-torsor. If the group $G$ is clear, we write $\Gr_G$ by $\Gr$ for simplicity.
\begin{thm}\label{affGrass}
The affine Grassmannian $\Gr$ is represented as the inductive limit of subfunctors $\Gr=\underrightarrow\lim X_i$, with $X_i\to X_{i+1}$ closed embedding, and $X_i$ being perfections of projective varieties. 
\end{thm}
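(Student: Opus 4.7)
The strategy is to reduce to $G=\GL_n$, bound the pole of the modification to cut out closed subfunctors $X_N\subset \Gr$, and identify each $X_N$ with the perfection of a classical projective Grassmannian-type variety.

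First pick a closed embedding $G\hookrightarrow \GL_n$ such that the quotient $\GL_n/G$ is quasi-affine. The induced map $\Gr_G\to \Gr_{\GL_n}$ is then a locally closed (and, when $\GL_n/G$ is affine, closed) embedding of presheaves, reducing the theorem to $G=\GL_n$. By Beauville--Laszlo descent, $\Gr_{\GL_n}(R)$ is identified with the set of finitely generated projective $W_\mO(R)$-submodules $\Lambda\subset W_\mO(R)[1/\varpi]^n$ of full rank. For $N\ge 0$, set
$$X_N(R)=\{\,\Lambda : \varpi^N W_\mO(R)^n\subset \Lambda\subset \varpi^{-N}W_\mO(R)^n\,\}.$$
Then $X_N\hookrightarrow X_{N+1}$ is a closed embedding and $\Gr_{\GL_n}=\varinjlim_N X_N$, so the task reduces to representing each $X_N$ by a perfection of a projective variety.

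Writing $Q_N(R)=\varpi^{-N}W_\mO(R)^n/\varpi^N W_\mO(R)^n$, giving such a $\Lambda$ amounts to giving a $W_{\mO,2N}(R)$-stable $R$-module direct summand $\bar\Lambda\subset Q_N(R)$ of rank $nN$. In the equal characteristic case, $W_\mO(R)=R[[\varpi]]$ and $Q_N(R)$ is a free $R$-module of rank $2nN$, so $X_N$ is tautologically the perfection of the closed subvariety of $\Gr(nN,2nN)$ on which the universal subbundle is stable under multiplication by $\varpi$, which settles that case.

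In mixed characteristic, $W_{\mO,2N}(R)$ is not of finite type over $R$ and $X_N$ carries no a priori scheme structure at all, so the main obstacle is representability itself: one must exhibit a perfect scheme $Y_N$, arising as the perfection of a projective variety, together with a functorial bijection $Y_N(R)\cong X_N(R)$ on all perfect $k$-algebras $R$. The plan, following \cite{Zh1}, is to first produce natural perfections of projective varieties for each affine Schubert variety $\Gr_{\le\mu}$ via a Demazure-type resolution built from iterated $\bP^1$-bundles over the partial affine flag variety of an Iwahori subgroup, and then to glue these along their closure relations. The technical heart is a descent argument: one must show that quotients of the positive loop group $L^+G$ by sufficiently deep congruence subgroups are themselves perfections of smooth affine varieties over $k$, and then that the resulting local-on-$L^+G$ construction descends to a perfect scheme representing $X_N$ as a $v$-sheaf.
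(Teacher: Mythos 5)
The paper does not give a proof of this theorem: it explicitly cites Beauville--Laszlo and Faltings for the equal-characteristic case and Bhatt--Scholze \cite{BS} for the mixed-characteristic case, noting that the earlier paper \cite{Zh1} only established the weaker statement that the $X_i$ are perfections of proper algebraic spaces. So you are attempting a proof where the paper records none; I will compare your sketch against what is actually required.

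Your reduction to $\GL_n$, the definition of the bounded subfunctors $X_N$, and the identification of $X_N$ in equal characteristic with a closed subvariety of a finite Grassmannian cut out by $\varpi$-stability are all correct (modulo the small point that $X_N$ decomposes into pieces indexed by the degree of the lattice, so the rank of the subspace is $nN+d$ on the component of degree $d$, not uniformly $nN$). The genuine gap is the mixed-characteristic case, which you acknowledge is ``the main obstacle'' but for which you offer only a plan ``following \cite{Zh1}'': Demazure-type resolutions of Schubert varieties, the statement that congruence quotients of $L^+G$ are perfections of smooth affine schemes, and a descent argument. This plan is precisely the one carried out in \cite{Zh1}, and it proves only that each $X_N$ is the perfection of a \emph{proper algebraic space}, not of a \emph{projective variety}. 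The Demazure resolution is a perfectly smooth projective iterated $\bP^1$-bundle, but it maps onto $\Gr_{\leq\mu}$ by a non-finite birational morphism, and there is no reason the target inherits projectivity; in perfect algebraic geometry one cannot invoke the usual push-forward-of-an-ample-bundle arguments, and indeed \cite{Zh1} stops at algebraic spaces for exactly this reason.

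What your proposal is missing is the actual content of \cite{BS}: one must (i) construct a natural line bundle on $\Gr_{\GL_n}$ (a determinant line bundle built from the perfect complex $\Lambda/\Lambda\cap\Lambda_0 \ominus \Lambda_0/\Lambda\cap\Lambda_0$); (ii) prove it is ample on each $X_N$; and (iii) prove a criterion that a perfect, proper, finitely presented algebraic space over $k$ carrying an ample line bundle is the perfection of a projective $k$-scheme. Step (iii) is itself nontrivial and rests on $h$-descent (or $v$-descent) for the relevant Picard functors and on comparing cohomology under perfection. None of these ingredients appears in your sketch, and they cannot be extracted from the Demazure resolution approach. Your final sentence mentions ``descends to a perfect scheme representing $X_N$ as a $v$-sheaf,'' which gestures at the right toolbox, but $v$-descent alone does not produce projectivity; the ample determinant line bundle is the essential new idea, and without it the proof does not close.
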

In the case when $\cha F=\cha k$, this is a classical result, due to Beauville-Laszlo, Faltings, etc (cf. \cite{BL, Fa}). In the mixed characteristic situation (i.e. $\cha F\neq \cha k$), this was conjectured in \cite[Appendix B]{Zh1}, and proved by Bhatt-Scholze (\cite{BS}). Previously a weaker statement that $X_i$ are perfections of proper algebraic spaces was proved in \cite{Zh1}, which allows one to formulate the geometric Satake in this setting.

\begin{rmk}
The category of perfect $k$-schemes is a full subcategory of the category of presheaves on $\Aff_k^\pf$, see \cite[Lemma A.10]{Zh1}. So the above theorem makes sense. 
\end{rmk}
\begin{rmk}One may wonder why we just consider the affine Grassmannian $\Gr$ as a presheaf on $\Aff_k^\pf$, rather than on the whole category $\Aff_k$ of $k$-algebras as usual. In equal characteristic, this is indeed possible and is the situation considered in \cite{BL, Fa}. But
in mixed characteristic, we do not know a correction definition of  $\Gr$ as a presheaf on $\Aff_k$, due to some pathology of taking ring of Witt vectors for non-perfect characteristic $p$ rings. On the other hand, passing to the perfection does not change the topology of a scheme so we do not loss any information if we are just interested in topologically problems related $\Gr$ (i.e. perverse sheaves on $\Gr$).
\end{rmk}
\begin{rmk}
\label{R:scholzediamond}
If $F=k((t))$, there exists the Beilinson-Drinfeld Grassmannian living over the disc $D$ (in fact, the Beilinson-Drinfeld Grassmannians are living over all Cartesian powers of $D$ over $k$), such that the above defined $\Gr$ appears as the fiber over the closed point $s\in D$. In mixed characteristic, the analogous geometric objects were recently constructed by Scholze, after he invented some new geometric structure called diamonds (cf. \cite{SW}). Using these objects, it literally makes sense to define the fiber product like $\Spec \bQ_p\times \Spec \bQ_p$ and it is possible to define the analogous Beilinson-Drinfeld Grassmannians over these fiber products.
\end{rmk}

Here is another useful interpretation of the affine Grassmannian as a homogeneous space. 
Let $H$ be an affine group scheme of finite type defined over $\mO$. We denote by $L^+H$ (resp. $LH$) the \emph{jet group} (resp. \emph{loop group}) of $H$. As presheaves,
\begin{equation*}\label{E:jet and loop}
L^+H(R)=H(W_\mO(R)),\quad LH(R)=H(W_\mO(R)[1/\varpi]).
\end{equation*}
It is represented by an affine group scheme (resp. ind-scheme). For $r\geq 0$, let $L^rH$ be the $r$th jet group, i.e. 
\begin{equation*}\label{E:rjet}
L^rH(R)=H(W_{\mO,r}(R)).
\end{equation*} 
Then $L^rH$ is represented by the perfection of an algebraic $k$-group (the usual Greenberg realization), and $L^+H=\underleftarrow\lim L^rH$. 

Now we can define a space $\Gr^{(\infty)}$ over $\Gr$ consisting of $(\mE,\beta)\in \Gr(R)$ and a trivialization $\epsilon:\mE\simeq \mE^0$. It turns out that this is an $L^+G$-torsor over $\Gr$ and there is a canonical isomorphism $\Gr^{(\infty)}\cong LG$ (see \cite[Lemma 1.3]{Zh1} or \cite[Proposition 1.3.6]{Zh2}). Therefore, one can realize $\Gr$ as the fpqc quotient
\begin{equation}
\label{E:gr as quotient}
\Gr\cong LG/L^+G.
\end{equation}
It follows that $L^+G$ acts naturally on $\Gr$ by left multiplication. In terms of the moduli problem \eqref{E:affGrass def}, it is the natural composition of $\beta$ with automorphisms $L^+G=\Aut(\mE^0)$.
\medskip

Next, we define some closed subvarieties of $\Gr$, including those studied in Example \ref{minu Sch} and Example \ref{quasiminuSch} as special cases.
First let $\mE_1$ and $\mE_2$ be two $G$-torsors over $D_{\bar k}=\Spec \mO_L$, and let $\beta: \mE_1\dashrightarrow\mE_2$ be a modification.
We attach to $\beta$ an element
$G(\mO_L)\backslash G(L)/G(\mO_L)$ as follows: by choosing isomorphisms $\epsilon_1:\mE_1\simeq \mE^0$ and $\epsilon_2:\mE_2\simeq \mE^0$, one obtains an automorphism of the trivial $G$-torsor $\epsilon_2\beta\epsilon_1^{-1}\in \Aut(\mE^0|_{D_{\bar k}^*})$ and therefore an element in $G(L)$. Different choices of $\epsilon_1$ and $\epsilon_2$ will modify this element by left and right multiplication by elements from $G(\mO_L)$. Therefore,
$\beta$ gives rise to a well-defined element in $G(\mO_L)\backslash G(L)/G(\mO_L)$. Via the bijection \eqref{cartan2}, we attach $\beta$ an element
\[\inv(\beta)\in \xcoch(T)/W.\]
We call $\inv(\beta)$ the relative position of $\beta$. 
\begin{ex}\label{invbetaGLn}
Let $G=\GL_n$. We identify $\GL_n$-torsors on $D$ with finite projective $\mO$-modules of rank $n$ in the usual way. Then $\inv(\beta)$ can be describe as follows.
We identify $\xcoch(T)\simeq \bZ^n$ as in Example \ref{coweightGLn}. 
Given two finite free $\mO$-modules of rank $n$ $\La_1$ and $\La_2$, and $\beta: \La_1[1/\varpi]\simeq \La_2[1/\varpi]$, there always exist a basis $(e_1,\ldots,e_n)$ of $\La_1$ and a basis $(f_1,\ldots,f_n)$ of $\La_2$ such that $\beta$ is given by 
$$\beta(e_i)=p^{m_i}f_i$$ 
and $m_1\geq m_2\geq\cdots\geq m_n$. In addition, this sequence $(m_1,\ldots,m_n)$ is independent of the choice of the basis. Then we define $\inv(\beta)=(m_1,\ldots,m_n)$.
\end{ex}

Now let $\mE_1$ and $\mE_2$ be two $G$-torsors over $D_R$, and let
$\beta: \mE_1\dashrightarrow \mE_2$ be a modification.
Then for each geometric point $x\in \Spec R$, by base change we obtain a triple $(\mE_1|_{D_x},\mE_2|_{D_x},\beta_x: \mE_1|_{D_x}\dashrightarrow \mE_2|_{D_x})$. Let $\inv_x(\beta):=\inv(\beta_x)$.

We define the (spherical) Schubert cell $\mathring{\Gr}_{\mu}\subset \Gr\otimes \bar k$ as
\[\mathring{\Gr}_\mu:=\left\{(\mE,\beta)\in \Gr\otimes \bar k\mid \inv(\beta)=\mu\right\}.\]
One can show that it is locally closed in $\Gr\otimes \bar k$ and forms an $L^+G\otimes \bar k$-orbit through $\mu(\varpi)$. Its closure, denoted by $\Gr_\mu$ is called a (spherical) Schubert variety\footnote{Note that in literature sometimes Schubert varieties are denoted by $\Gr_{\leq\mu}$ or $\overline{\Gr}_\mu$ while Schubert cells are denoted by $\Gr_\mu$. We hope our notation is not too confusing.}. It is a union of Schubert cells. Then one can define a partial order (usually called the Bruhat order) on $\xcoch(T)/W$ as follows:
\[\la\leq \mu \quad \mbox{ iff } \quad \Gr_{\la}\subset \Gr_\mu.\]
\begin{rmk}
Usually, the Bruhat order is defined combinatorially in terms of the root datum associated to $G$. For our discussion in the sequel, the above definition will suffice.
\end{rmk}
\begin{ex}
Let $G=\GL_n$. We identify $\xcoch(T)/W$ with $\xcoch(T)^+$ as in Example \ref{invbetaGLn}. 
Then Bruhat order can be explicitly described as follows: $\mu_1=(m_1,\ldots,m_n)\leq \mu_2=(l_1,\ldots,l_n)$ if 
$$m_1+\cdots+m_j\leq l_1+\cdots+l_j, \ \ \ j=1,\ldots, n, \quad \mbox{ and }\ \ m_1+\cdots+m_n=l_1+\cdots+l_n.$$ 
\end{ex}

The Schubert variety $\Gr_\mu$ is a projective variety in general defined over $\bar k$\footnote{It is in fact defined over the field of definition of $\mu$.}. It is perfectly smooth if and only if $\Gr_\mu=\mathring{\Gr}_\mu$, in which case $\mu$ is a minimal element in $\xcoch(T)/W$ with respect to the Bruhat order. Such $\mu$ is called \emph{minuscule}.

\begin{ex}
Let $G=\GL_n$. Let $\omega_i=1^i0^{n-i}$ as in Example \ref{minu Sch}, and let $\omega_i^*=\omega_{n-i}-\omega_{n}$. 
Note that  $\inv(\beta)=\omega_i$ if and only if $\beta$ extends to a genuine map $\La_1\to \La_2$  such that $\varpi \La_2\subset \La_1$ and $\La_2/\La_1$ is a $k$-vector space of dimension $i$. Similarly, $\inv(\beta)=\omega_i^*$ if and only if $\beta^{-1}$ induces the inclusions $\varpi \La_1\subset \La_2\subset \La_1$ such that $\La_1/\La_2$ is of dimension $i$.
It is not hard to see that $\omega_i,\omega_i^*$ are minuscule, and $\Gr_{\omega_i}\cong \Gr_{\omega_{n-i}^*}\cong \Gr(n-i,n)$. Taking $k$-points recovers Example \ref{minu Sch}. 
\end{ex}

Next we briefly discuss the usual formulation of the geometric Satake and refer to \cite{Zh2} for a much more detailed discussion (at least in the equal characteristic case). 
A motivic version will be discussed in the next subsection.
For simplicity, we assume that $k=\bar{k}$ in the rest of this subsection, and denote $L^+G$ by $K$ and $L^nG$ by $K_n$ (so contrary to the previous notations, in this rest of this subsection $K$ is a pro-algebraic group rather than a pro-finite group). Let $K^{(n)}=\ker(K\to K_n)$ denote the $n$th congruence subgroup. We fix a coefficient ring $E$ in which $\on{char} k$ is invertible. Sheaves will mean $E$-sheaves.

We first briefly recall the definition of the category $\on{P}_{K}(\Gr)$ of perverse sheaves on the affine Grassmannian, usually called the \emph{Satake category}, and denoted by $\Sat_G$. 
First, we can choose a presentation $\Gr=\underrightarrow\lim X_i$ such that each $X_i$ is a $K$-invariant closed subscheme (e.g. we can choose $X_i$ to be a finite union of Schubert varieties). In addition, the action of $K$ on $X_i$ factors through some $K_m$.
Then one can define 
\[\Sat_G=\on{P}_{K}(\Gr)=\underrightarrow\lim\on{P}_{K}(X_i),\]
where $\on{P}_{K}(X_i)$ is defined as the category of $K_m$-equivariant perverse sheaves on $X_i$ (which is independent of the choice of $m$ up to a canonical equivalence) and the connecting functor $\on{P}_{K}(X_i)\to \on{P}_{K}(X_{i'})$ is the pushforward along the closed embedding $X_i\to X_{i'}$. We refer to \cite[\S A.1.17]{Zh2} for a detailed discussion of the definition of this category.

It turns out that this is a semisimple abelian monoidal category with the monoidal structure given by Lusztig's convolution product of sheaves. We recall the definition. Let $\Gr\tilde\times \Gr$ be the twisted product of affine Grassmannians (also known as the convolution Grassmannian). In terms moduli problem, it is defined as the presheaf
\begin{equation*}\label{I:conv}
(\Gr\tilde\times\Gr)(R)=\left\{(\mE_1,\mE_2,\beta_1,\beta_2)\ \left|\ \begin{split}&\mE_1,\mE_2 \mbox{ are }  G\mbox{-torsors on } D_R, \\
&  \beta_1:\mE_1\dashrightarrow \mE^0, \beta_2:\mE_2\dashrightarrow  \mE_1\end{split}\right.\right\}.
\end{equation*}
There exist a natural map
\begin{equation}\label{conv m0}
m:\Gr\tilde\times\Gr\to \Gr,\quad (\mE_1,\mE_2,\beta_1,\beta_2)\mapsto (\mE_2, \beta_1\beta_2).
\end{equation}
and a natural projection
$$\pr_1:\Gr\tilde\times\Gr\to \Gr, \quad (\mE_1,\mE_2,\beta_1,\beta_2)\mapsto (\mE_1,\beta_1),$$
which together induce an isomorphism
$(\pr_1,m):\Gr\tilde\times\Gr\cong \Gr\times \Gr$.
In particular, the convolution Grassmannian is representable.  The map $m$ is usually called the convolution map. 

More generally, there exists the $r$-fold convolution Grassmannian $\Gr\tilde\times\cdots\tilde\times\Gr$, classifying a chain of modifications of $G$-torsors
\begin{equation}
\label{E:chain mod}
\mE_r\stackrel{\beta_r}{\dashrightarrow}\mE_{r-1}\stackrel{\beta_{r-1}}{\dashrightarrow}\cdots\stackrel{\beta_0}{\dashrightarrow}\mE_0=\mE^0.
\end{equation}
There exist a natural projection $\pr_j$ to the $j$-fold convolution Grassmannian for each $j=1,\ldots,r-1$ by remembering $\mE_j\stackrel{\beta_j}{\dashrightarrow}\cdots\stackrel{\beta_0}{\dashrightarrow}\mE_0=\mE^0$, and
an $r$-fold convolution map
\begin{equation}\label{conv m}
m:\Gr\tilde\times\cdots\tilde\times\Gr\to \Gr,
\end{equation}
sending \eqref{E:chain mod} to $(\mE_r, \beta_1\cdots\beta_r)$. As before, $K$ acts naturally on $\Gr\tilde\times\cdots\tilde\times\Gr$ as automorphisms of $\mE_0=\mE^0$, and there is a $K$-torsor $(\Gr\tilde\times\cdots\tilde\times\Gr)^{(\infty)}\to\Gr\tilde\times\cdots\tilde\times\Gr$, which classifies \eqref{E:chain mod} together with an isomorphism $\epsilon:\mE_r\simeq \mE^0$.

Using the isomorphism \eqref{E:gr as quotient}, it is easy to see that $\Gr\tilde\times\Gr\cong \Gr^{(\infty)}\times^K\Gr=LG\times^K\Gr$
with the convolution map induced by the action map $LG\times\Gr\to \Gr$. 
This motivates us to define the twisted product of $K$-invariant closed subsets of $\Gr$. Namely,
for a closed subset $X\subset \Gr$, let $X^{(\infty)}$ denote its preimage in $\Gr^{(\infty)}$. So $X^{(\infty)}\to X$ is a $K$-torsor, and for every $n$, let $X^{(n)}=X^{(\infty)}/K^{(n)}\subset LG/K^{(n)}$, which is a $K_n$-torsor over $X$.
Then given two closed subsets $X_1, X_2$ with $X_2$ being $K$-invariant, we denote their twisted product as
\begin{equation}
\label{E-conv Sch}
X_1\tilde\times X_2:= X_1^{(\infty)}\times^K X_2,
\end{equation}
which is a closed subsets of $\Gr\tilde\times\Gr$ (and therefore is representable). If $X_2$ is quasi-compact, there is an integer $m$ sufficiently large so that the action of $K$ on $X_2$ factors through $K_m$. 
Then $X_1^{(\infty)}\times^K X_2=X_1^{(m)}\times^{K_m}X_2$.  Note that
if $X_i=\Gr_{\mu_i}$ for $\mu_i\in \xcoch(T)/W$, we can alternatively describe $\Gr_{\mu_1}\tilde{\times}\Gr_{\mu_2}$ as
\[\Gr_{\mu_1}\tilde{\times}\Gr_{\mu_2}\cong \left\{(\mE_1,\mE_2,\beta_1,\beta_2)\in\Gr\tilde\times\Gr\mid \inv(\beta_1)\leq \mu_1, \inv(\beta_2)\leq \mu_2\right\}.\]

More generally, we can define the twisted product of a closed subset $X_1$ in the $r$-fold convolution Grassmannian and a $K$-invariant closed subset $X_2$ in the $s$-fold convolution Grassmannian by the same formula in \eqref{E-conv Sch}, with $X_1^{(\infty)}$ understood as the preimage of $X_1$ in $(\Gr\tilde\times\cdots\tilde\times\Gr)^{(\infty)}$.
In particular,  if $\mmu=(\mu_1,\ldots,\mu_r)$ is a sequence of dominant coweights of $G$, one can define the twisted product of Schubert varieties
$$\Gr_{\mmu}:=\Gr_{\mu_1}\tilde\times\cdots\tilde\times\Gr_{\mu_r}\subset\Gr\tilde\times\cdots\tilde\times\Gr,$$
and if $\nu_\bullet=(\nu_1,\ldots,\nu_s)$ is another sequence, then $\Gr_{\mmu}\tilde\times\Gr_{\nu_\bullet}\cong\Gr_{\mmu,\nu_\bullet}$.
Let $|\mu_\bullet|=\sum \mu_i$. Then the convolution map \eqref{conv m} induces
\begin{equation}\label{conv m2}
m: \Gr_{\mu_\bullet}\to \Gr_{|\mu_\bullet|}, \quad \mE_r\stackrel{\beta_r}{\dashrightarrow}\cdots\stackrel{\beta_0}{\dashrightarrow}\mE_0=\mE^0\mapsto (\mE_r,\beta_1\cdots\beta_r),
\end{equation}
which is known to be a semismall map. This implies that
\begin{equation}
\label{E:dim est}
\dim (\Gr_{\mmu}\times_\Gr\Gr_{\nu_\bullet})\leq \frac{1}{2}(d_{\mmu}+d_{\nu_\bullet}),
\end{equation}
where we denote $d_{\mmu}=\dim \Gr_{\mmu}$. (See \cite[Proposition 3.1.10]{XZ}.) 

Now, for $\mA_1,\mA_2\in \on{P}_{K}(\Gr)$,
we denote by $\mA_1\tilde\boxtimes\mA_2$ the ``external twisted product'' of $\mA_1$ and $\mA_2$ on $\Gr\tilde\times\Gr$. It is the unique perverse sheaf on $\Gr\tilde\times \Gr$ whose pullback to $\on{Supp}(\mA_1)^{(m)}\times \on{Supp}(\mA_2)$ is isomorphic to the external product of the pullback of $\mA_1$ to $\on{Supp}(\mA_1)^{(m)}$ and $\mA_2$. Here $\on{Supp}(\mA_i)$ is the support of $\mA_i$, which is $K$-invariant, and $m$ is sufficiently large so that the action of $K$ on $\on{Supp}(\mA_2)$ factors through $K_m$.
For example, if $\mA_i=\IC_{\mu_i}$, then $\IC_{\mu_1}\tilde\boxtimes\IC_{\mu_2}$ is canonically isomorphic to the intersection cohomology sheaf of $\Gr_{\mu_1}\tilde\times\Gr_{\mu_2}$.
Then the convolution product of $\mA_1$ and $\mA_2$ is defined as
\begin{equation}\label{Lus conv}
\mA_1\star\mA_2:=m_!(\mA_1\tilde\boxtimes\mA_2),
\end{equation}
where $m:\Gr\tilde\times\Gr\to \Gr$ is the convolution map (defined by \eqref{conv m0}).
A priori, this is a $K$-equivariant complex on $\Gr$. But because of the semismallness of the convolution map \eqref{conv m2}, it is a perverse sheaf.

The twisted product $\Gr\tilde\times\Gr$ is ``topologically isomorphic to" the product $\Gr\times\Gr$. If $F=\bC((\varpi))$, one can make this precise using the analytic topology on $\Gr$. Namely, under the analytic topology, the $K$-torsor $\Gr^{(\infty)}\to \Gr$ is trivial.  In general, this can be understood motivically. 
Then when $E=\Ql$ ($\ell\neq \cha k$), applying the K\"unneth formula, one can endow the hypercohomology functor 
$$\on{H}^*(\Gr_G,-) : \on{P}_{K}(\Gr_G,\Ql)\to \on{Vect}_{\Ql}$$ with a canonical monoidal structure (see \cite[\S 2.3]{Zh1} and \cite[\S 5.2]{Zh2}). 

This following theorem is usually referred as the geometric Satake equivalence. We assume that $E=\Ql$.

\begin{thm}\label{intro:geomSat}
The monoidal functor $\on{H}^*$ factors as the composition of an equivalence of monoidal categories from $\Sat_G$ to the category $\on{Rep}_{\Ql}(\hat{G})$ of finite dimensional representations of the Langlands dual group $\hat{G}$ over  $\Ql$ and the forgetful functor from $\on{Rep}_{\Ql}(\hat{G})$ to the category $\on{Vect}_{\Ql}$ of finite dimensional $\Ql$-vector spaces. Under the equivalence, the intersection cohomology sheaf $\IC_\mu$ of the Schubert variety $\Gr_\mu$ corresponds to the irreducible representation $V_\mu$ of $\hat G$ of ``highest weight" $\mu$.
\end{thm}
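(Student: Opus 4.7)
The plan is to verify that $(\Sat_G,\star,\rmH^*)$ is a neutral Tannakian category over $\Ql$ and to identify the resulting Tannakian group with $\hat G$. This decomposes into four principal steps.

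First, I would promote the convolution product $\star$ from a mere monoidal structure to a \emph{symmetric} monoidal structure. The commutativity constraint cannot be seen from the single-point picture $\Gr\tilde\times\Gr\to\Gr$, so the plan is to pass to a global picture: the Beilinson--Drinfeld Grassmannian $\Gr_{G,X^2}$ over a suitable two-dimensional base $X^2$, whose generic fiber is $\Gr\times\Gr$ and whose diagonal fiber is $\Gr\tilde\times\Gr$. The ``fusion product'' obtained by nearby cycles identifies $\mA_1\star\mA_2$ with a global sheaf on $\Gr_{G,X^2}$ whose restriction to the swapped diagonal exchanges the factors, yielding a canonical commutativity constraint. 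In equal characteristic this is the Mirkovi\'c--Vilonen--Beilinson--Drinfeld construction. In mixed characteristic, where the naive product $\Spec\bQ_p\times\Spec\bQ_p$ does not behave well, one uses Scholze's diamonds (cf. Remark \ref{R:scholzediamond}) to construct the analogous global Grassmannian over $\Spd\bQ_p\times\Spd\bQ_p$; alternatively, as in \cite{Zh1}, one works directly on the Witt vector affine Grassmannian and transports the fusion structure via the comparison with its equal characteristic analog. Establishing this commutativity constraint, and in particular verifying that the sign obstruction is trivialized by the parity of $\dim\Gr_\mu$ on each summand, is the main technical obstacle.

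Second, I would verify that $\rmH^*(\Gr,-)$ is a symmetric monoidal fiber functor. Monoidality follows from the K\"unneth formula applied to the fusion picture above. Exactness requires that $\rmH^*(\Gr,\mA)$ be concentrated in degree $0$ for perverse $\mA$; this in turn reduces to the semismallness of the convolution map $m$ in \eqref{conv m2} and the parity vanishing of the stalks of $\IC_\mu$, via the dimension estimate \eqref{E:dim est} together with hyperbolic localization applied to a generic cocharacter of $T$. Faithfulness follows from the fact that the weight-zero part of $\rmH^*(\Gr,\IC_\mu)$ is detected by the nonempty semi-infinite orbits $S_\nu\cap\Gr_\mu$ (MV cycles).

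Third, Tannakian reconstruction then produces an affine group scheme $\tilde G$ over $\Ql$ with an equivalence $\Sat_G\simeq\on{Rep}_{\Ql}(\tilde G)$. Semisimplicity of $\Sat_G$ (which follows from the decomposition theorem applied to the stratification by Schubert cells) forces $\tilde G$ to be pro-reductive, and finite generation of the tensor category by the $\IC_\mu$ forces $\tilde G$ to be of finite type.

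Fourth, to identify $\tilde G$ with $\hat G$ I would reconstruct the root datum. The cocharacter $\xcoch(T)$ acts by hyperbolic localization with respect to a generic cocharacter, giving a canonical weight decomposition $\rmH^*(\Gr,\mA)=\bigoplus_\nu\rmH^*_c(S_\nu,\mA)$; this exhibits a maximal torus $\hat T\subset\tilde G$ with character lattice $\xcoch(T)$, and identifies the highest weight of $\IC_\mu$ with $\mu$ via the unique open MV cycle. Computing the convolution $\IC_\mu\star\IC_{\mu'}$ on its top stratum and matching it with the Pieri-type rule for $\hat G$ then pins down the coroots, and comparison with the Langlands dual root datum (plus a check that the $\sigma$-action matches) yields the desired identification $\tilde G\cong\hat G$ with $\IC_\mu\leftrightarrow V_\mu$.
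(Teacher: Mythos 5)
Your four-step architecture (symmetric monoidal structure via a commutativity constraint, fiber functor, Tannakian reconstruction, identification of the root datum) is the standard shape of the argument and is fine as a skeleton in equal characteristic. The genuine gap is in Step 1, in your account of what \cite{Zh1} actually does in mixed characteristic. You offer two routes: Scholze's diamonds, or ``as in \cite{Zh1}, one works directly on the Witt vector affine Grassmannian and transports the fusion structure via the comparison with its equal characteristic analog.'' The first route was only \emph{announced} by Scholze and is not the route taken by \cite{Zh1}; the second is a mischaracterization. There is no fusion picture on the Witt vector Grassmannian in \cite{Zh1}, and nothing is ``transported'' from equal characteristic at the level of geometric structures.

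What \cite{Zh1} does instead, as this paper explains, is: (i) construct a \emph{candidate} commutativity constraint intrinsically via a categorical version of Gelfand's trick (an idea already present in \cite{Gi1}), which works equally well on the Witt vector Grassmannian; and (ii) reduce the verification of the hexagon axiom for this candidate to a purely combinatorial identity for the Lusztig--Vogan polynomials of the affine Weyl group equipped with an involution, namely $P^\sigma_{y,w}(q)=P_{y,w}(-q)$ for suitable $y,w$. This identity was proved by Lusztig and Yun \cite{LY} using equal characteristic geometric Satake; that is the (indirect, combinatorial) way in which the equal characteristic theorem enters. Without naming the Gelfand-trick construction and the Lusztig--Vogan identity, you have not actually indicated how the hardest step of the proof works, and your sentence about ``transporting the fusion structure'' would not survive scrutiny: the whole point of the argument is to avoid needing a fusion interpretation in mixed characteristic. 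A smaller inaccuracy: in Step 2 you assert $\rmH^*(\Gr,\mA)$ is ``concentrated in degree $0$ for perverse $\mA$''; it is not --- it is concentrated in degrees of a fixed parity on each $\IC_\mu$, and exactness of the fiber functor follows instead from semisimplicity of $\Sat_G$ (which you invoke in Step 3).
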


\begin{rmk}
\label{R: birth of the dual group}
(1) Indeed, it is more canonical to \emph{define} the dual group $\hat G$ of $G$ as the Tannakian group of the Tannakian category $(\Sat_G,\on{H}^*)$, and \emph{define} $V_\mu$ as $\on{H}^*(\Gr,\IC_\mu)$, equipped with the tautological action $\hat G=\Aut^\otimes(\on{H}^*)$. In the rest of the article, we will take this point of view.

(2) As explained in \cite[\S 5.3]{Zh2}, the Tannakian group is canonically equipped with a pinning $(\hat G, \hat B, \hat T, \hat X)$. We briefly recall the construction of $(\hat B, \hat T)$ and refer to \emph{loc. cit.} for more details.
The grading on the cohomology $\on{H}^*$ defines a cocharacter $\bG_m\to \hat G$, which can be shown to be regular. Then its centralizer gives a maximal torus $\hat T$, and $\hat B\supset \hat T$ is the unique Borel in $\hat G$ with respect to which this cocharacter is dominant.
\end{rmk}

When $F=k((t))$ is an equal characteristic local field, this theorem is a result of works of Lusztig, Ginzburg, Beilinson-Drinfeld and Mirkovi\'{c}-Vilonen (cf. \cite{Lu,Gi1,BD,MV}). When $F$ is a mixed characteristic local field, this theorem was proved in \cite{Zh1}. 

The most difficult part of theorem is the construction of a commutativity constraint for the monoidal structure on $\on{P}_{K}(\Gr_G)$ such that $\on{H}^*$ becomes a tensor functor. In equal characteristic, the construction relies on an interpretation of the convolution product as fusion product, which crucially uses the existence of Beilinson-Drinfeld Grassmannians over Cartesian powers of an algebraic curve over $k$. As mentioned in Remark \ref{R:scholzediamond}, the analogous geometric objects in mixed characteristic have been constructed by Scholze recently. Scholze also announced recently that it is possible to define the fusion product and establish the geometric Satake in mixed characteristic in a fashion similar to the equal characteristic case. 

The approach used in \cite{Zh1} is different. We constructed a commutativity constraint using a categorical version of the classical Gelfand's trick. This idea is not new, and already appeared in \cite{Gi1}. Therefore, we do have a candidate of the commutativity constraint even in mixed characteristic. The problem is that it is not clear how to verify the properties it supposes to satisfy (e.g. the hexagon axiom), without using the fusion interpretation.

Our new observation is that the validity of these properties is equivalent to a numerical result for the affine Hecke algebra. Namely, in \cite{LV,Lu2} Lusztig and Vogan introduced, for a Coxeter system $(W,S)$ with an involution, certain polynomials $P^\sigma_{y,w}(q)$ similar to the usual Kazhdan-Lusztig polynomials $P_{y,w}(q)$. Then it was conjectured in \cite{Lu2} that if $(W,S)$ is an affine Weyl group and $y,w$ are certain elements in $W$,
\[P_{y,w}^\sigma(q)=P_{y,w}(-q).\]
See \emph{loc. cit.} for more details. This conjecture is purely combinatoric, but its proof by Lusztig and Yun \cite{LY} is geometric, which in fact uses the equal characteristic geometric Satake! We then go in the opposite direction by showing that this formula implies that the above mentioned commutativity constraint is the correct one.  

So our proof of Theorem \ref{intro:geomSat} in mixed characteristic uses the geometric Satake in equal characteristic. It is an interesting question to find a direct proof of the above combinatoric formula, which will yield a purely local proof of the geometric Satake, in both equal and mixed characteristic. In addition, a similar strategy should be applicable to proving the geometric Satake for ramified $p$-adic group (whose equal characteristic counterparts were established in \cite{Zh0, Ri}).

\subsection{The motivic Satake category}
\label{SS: MotSat}
To explain some ideas behind the later application of the geometric Satake to arithmetic geometry of Shimura varieties, we would like to reformulate the geometric Satake motivically. 
We have to assume that readers are familiar with basic theory of (pure) motives in this subsection. A classical reference is \cite{Kl}. 

We assume that $k$ is a perfect field in this subsection.
We will construct a $\bQ$-linear semisimple super Tannakian subcategory $\Sat^m$ inside the category of (numerical) motives. One can recover (a variant of) the usual Satake category by tensoring $\Sat^m$ with $\Ql$.
To avoid some complicated geometry, we will mainly focus on the $\GL_n$ case and briefly mention modifications needed to deal with general reductive groups at the end of this subsection.

Let $\on{Var}_k$ denote the category of smooth projective varieties over $k$.
Let $\on{Mot}^{\on{rat}}_k$ (resp. $\on{Mot}_k^{\on{num}}$) denote the $\bQ$-linear category of pure motives over $k$ with respect to rational (resp. numerical) equivalence relations. Then $\on{Mot}_k^*$ (for $*=\on{rat},\on{num}$) is a symmetric monoidal $\bQ$-linear pseudo-abelian category, and $\on{Mot}_k^{\on{num}}$ is a super Tannakian semisimple $\bQ$-linear abelian category (see \cite{Ja}). Following \cite{Ja}, objects in $\on{Mot}_k^{*}$ are denoted by triples $(X,p,m)$, where $X$ is a smooth projective variety over $k$, $p$ is an idempotent in the ring of degree zero self-correspondences of $X$ (defined via one of above adequate relations), and $m\in\bZ$. Then there is the natural  functor
\[h:\on{Var}_k^{\on{op}}\to \on{Mot}^*_k,\quad X\mapsto h(X):=(X,\id,0).\]
As usual, we write $h(X)(m)$ for $(X,\id,m)$.
The functor $h$ factors through the localization of $\on{Var}_k$ with respect to universal homeomorphisms.
Note that $\cha k=p>0$, the perfection functor from $\on{Var}_k$ to the category $\on{Sch}_k^\pf$ of perfect $k$-schemes factors through this localization and in fact induces a full embedding of this localized category to $\on{Sch}_k^\pf$ (by \cite[Corollary A.16, Proposition A.17]{Zh1}, see also \cite[Lemma 3.8, Proposition 3.11]{BS}). The essential image is denoted by $\on{Var}^\pf_k$, consisting of perfect schemes over $k$ as the perfection of smooth projective schemes.
Therefore, we obtain a functor
$$h:(\on{Var}^\pf_k)^{\on{op}}\to\on{Mot}^*_k.$$

Recall that given a sequence $\mmu=(\mu_1,\ldots,\mu_r)$, there is the twisted product $\Gr_{\mmu}=\Gr_{\mu_1}\tilde\times\cdots\tilde\times\Gr_{\mu_r}$ of Schubert varieties, whose dimension is denoted by $d_\mmu$. If (and only if) each $\mu_i$ is minuscule, $\Gr_\mmu$ is the perfection of a smooth projective variety and therefore defines an object in $\on{Var}^\pf_k$. In the case of $\GL_n$, $\Gr_\mmu$ is an iterated fibration by perfect Grassmannian varieties, 
classifying chains of lattices $\{\La_i, i=0,\ldots, r\}$ satisfying
$$\La_0=\mO^n, \quad  \varpi^{d_i+1}\La_{i-1}\subset \La_i\subset \varpi^{d_i}\La_{i-1} \mbox{ for some } d_i\in\bZ.$$
(Here and in the rest of this subsection, we only describe $k$-points of $\Gr_\mmu$ for simplicity.) 
Let $\on{H}(\Gr_\mmu, \Gr_{\nu_\bullet})$ be the $\bQ$-vector space with a basis given by irreducible components of $\Gr_{\mmu}\times_\Gr\Gr_{\nu_\bullet}$ of dimension \emph{exactly} $\frac{1}{2}(d_{\mmu}+d_{\nu_\bullet})$. We need the following geometric fact.
\begin{lem}
\label{L:faithful realization}
Let $m,n,r$ be three integers satisfying $2(m-n)=d_{\mmu}-d_{\nu_\bullet}$ and $2(n-r)=d_{\nu_\bullet}-d_{\la_\bullet}$. 
Then the natural map
\[\on{H}(\Gr_{\mmu},\Gr_{\nu_\bullet})\to \Hom_{\on{Mot}^*_k}(h(\Gr_{\mmu})(m),h(\Gr_{\nu_\bullet})(n))\]
is injective. In addition, there is a unique $\bQ$-linear map 
$$\on{H}(\Gr_{\mmu},\Gr_{\nu_\bullet})\otimes \on{H}(\Gr_{\nu_\bullet},\Gr_{\la_\bullet})\to \on{H}(\Gr_{\mmu},\Gr_{\la_\bullet})$$ 
making the following diagram commutative
\begin{small}
\[
\begin{CD}
\on{H}(\Gr_{\mmu},\Gr_{\nu_\bullet})@.\  \otimes \ @.\on{H}(\Gr_{\nu_\bullet},\Gr_{\la_\bullet})@>>> \on{H}(\Gr_{\mmu},\Gr_{\la_\bullet})\\
@VVV@.@VVV@VVV\\
\Hom_{\on{Mot}^*_k}(h(\Gr_{\mmu})(m),h(\Gr_{\nu_\bullet})(n))@.\otimes @. \Hom_{\on{Mot}^*_k}(h(\Gr_{\nu_\bullet})(n),h(\Gr_{\la_\bullet})(r))@>>>\Hom_{\on{Mot}^*_k}(h(\Gr_{\mmu})(m),h(\Gr_{\la_\bullet})(r)).
\end{CD}
\]
\end{small}
\end{lem}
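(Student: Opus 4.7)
The plan is as follows: identify the motivic Hom-groups with explicit Chow groups of cycles on the product and check that the dimensions match; prove injectivity by passing through the $\ell$-adic realization; construct the composition via the standard correspondence formula, using the dimension estimate \eqref{E:dim est} to keep everything in the required degree.

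First I would unravel the definitions. For smooth projective $X, Y$ over $k$, morphisms $h(X)(m) \to h(Y)(n)$ in $\on{Mot}^*_k$ are represented by cycle classes on $X \times Y$ of dimension $\dim Y + m - n$ modulo the chosen adequate equivalence. Substituting $X = \Gr_\mmu$, $Y = \Gr_\nnu$ and the constraint $2(m-n) = d_\mmu - d_\nnu$ gives cycles of dimension exactly $\tfrac{1}{2}(d_\mmu + d_\nnu)$. By \eqref{E:dim est} this is an upper bound on $\dim(\Gr_\mmu \times_\Gr \Gr_\nnu)$, so the generators of $\on{H}(\Gr_\mmu, \Gr_\nnu)$ are precisely its top-dimensional irreducible components; each such component, pushed forward along the closed embedding $\Gr_\mmu \times_\Gr \Gr_\nnu \hookrightarrow \Gr_\mmu \times \Gr_\nnu$, supplies a cycle class of the right dimension, yielding the natural map.

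For injectivity, note that since rational equivalence refines numerical equivalence, the surjection $\on{Hom}_{\on{Mot}^{\on{rat}}_k} \twoheadrightarrow \on{Hom}_{\on{Mot}^{\on{num}}_k}$ reduces matters to the numerical category, and since numerical equivalence is coarser than homological equivalence via any Weil cohomology, it further reduces to showing that the fundamental classes of distinct top-dimensional components $\{Z_i\}$ are $\Ql$-linearly independent in $\on{H}^{*}_{\et}(\Gr_\mmu \times \Gr_\nnu, \Ql)$. The cleanest route I see to this is via the $\ell$-adic geometric Satake (Theorem \ref{intro:geomSat}): by K\"unneth and the decomposition theorem applied to the semismall convolution maps $m : \Gr_\mmu \to \Gr_{|\mmu|}$ and $\Gr_\nnu \to \Gr_{|\nnu|}$, the span of $\{[Z_i]\}$ in the relevant middle cohomology is identified with a multiplicity space of the form $\on{Hom}_{\hat G}(V_\mmu, V_\nnu)$ (up to duality), whose dimension equals the number of top-dimensional components---forcing linear independence.

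For the composition, the motivic composition formula is $\beta \circ \alpha = p_{13*}(p_{12}^*\alpha \cdot p_{23}^*\beta)$ on the triple product $\Gr_\mmu \times \Gr_\nnu \times \Gr_{\la_\bullet}$. For $\alpha = [Z_1]$ and $\beta = [Z_2]$, the set-theoretic intersection of $p_{12}^{-1}(Z_1)$ and $p_{23}^{-1}(Z_2)$ is $Z_1 \times_{\Gr_\nnu} Z_2$, whose image under $p_{13}$ lies in $\Gr_\mmu \times_\Gr \Gr_{\la_\bullet}$ (immediate from the fiber-product definition on points). Using a dimension estimate analogous to \eqref{E:dim est} for the triple fiber product (\cite[Prop.~3.1.10]{XZ}), the resulting cycle has dimension at most $\tfrac{1}{2}(d_\mmu + d_{\la_\bullet})$ and is supported on $\Gr_\mmu \times_\Gr \Gr_{\la_\bullet}$, hence is a $\mathbb{Q}$-linear combination of its top-dimensional components, defining the desired element of $\on{H}(\Gr_\mmu, \Gr_{\la_\bullet})$. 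Uniqueness of the composition map is then automatic from the injectivity established above.

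The hard part is really the linear independence: distinct irreducible top-dimensional subvarieties of a smooth projective variety need not have linearly independent $\ell$-adic classes in general (e.g., two disjoint points in $\PP^1$ give proportional classes), so the argument must exploit the representation-theoretic structure supplied by geometric Satake rather than a generic cycle-theoretic fact.
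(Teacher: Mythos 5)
Your argument for the composition (proper intersections via \eqref{E:dim est}, then uniqueness from injectivity) matches the paper's, and your identification of the hard point — that linear independence of $[Z_i]$ in \'etale cohomology is not automatic and must come from the decomposition theorem for the semismall convolution maps — is exactly the paper's mechanism as well.

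However, there is a genuine logical gap in your injectivity reduction. You write that ``since numerical equivalence is coarser than homological equivalence \dots it further reduces to showing that the fundamental classes \dots are $\Ql$-linearly independent in $\on{H}^{*}_{\et}$.'' This implication runs the wrong way. Because numerical equivalence is \emph{coarser}, the surjections go $\on{Hom}_{\on{Mot}^{\on{rat}}}\twoheadrightarrow \on{Hom}_{\on{Mot}^{\on{hom}}}\twoheadrightarrow \on{Hom}_{\on{Mot}^{\on{num}}}$, so a class can be nonzero modulo homological equivalence (i.e.\ have nonzero \'etale class) yet be numerically trivial. Linear independence in $\on{H}^{*}_{\et}$ gives injectivity of $\on{H}(\Gr_\mmu,\Gr_{\nnu})$ into $\on{Mot}^{\on{hom}}_k$, hence into $\on{Mot}^{\on{rat}}_k$, but this is \emph{not} enough for $\on{Mot}^{\on{num}}_k$. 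In general, promoting homological independence to numerical independence is precisely the content of (a piece of) the standard conjectures and is open. The paper closes this gap by observing that $\Gr_\mmu\times\Gr_{\nnu}$ admits a cellular decomposition (by convolution of semi-infinite orbits), which forces rational, homological, and numerical equivalence to coincide on it; only then does \'etale linear independence pass down to $\on{Mot}^{\on{num}}_k$. You should insert this cellular decomposition step; without it the reduction to cohomology is unjustified for the numerical motive.
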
 
\begin{proof}
We give a sketch.
Since $\Gr_\mmu\times\Gr_{\nu_\bullet}$ admits a cellular decomposition (by the convolution product of semi-infinite orbits, e.g. using \cite[\S 3.2.5, \S 3.2.6]{XZ}), rational equivalence, homological equivalence and numerical equivalence. Then it is enough to show that the cycle classes of those $\frac{1}{2}(d_{\mmu}+d_{\nu_\bullet})$-dimensional irreducible components of $\Gr_{\mmu}\times_\Gr\Gr_{\nu_\bullet}$ in the \'etale cohomology of $\Gr_{\mmu}\times\Gr_{\nu_\bullet}$ are linearly independent. But this follows from the usual decomposition theorem for the convolution map \eqref{conv m2} (which is semismall).

Given the injectivity and the definition of the composition of correspondences, to prove the commutativity of the diagram, it is enough to note and inside $\Gr_{\mmu}\times\Gr_{\nu_\bullet}\times\Gr_{\la_\bullet}$ the intersections of cycles from $\on{H}(\Gr_{\mmu},\Gr_{\nu_\bullet})$ and $\on{H}(\Gr_{\nu_\bullet},\Gr_{\la_\bullet})$ are proper, and then to apply the dimension estimate \eqref{E:dim est}.
\end{proof}

Now we define a $\bQ$-linear additive category $\Sat_G^{0}$ and define the \emph{motivic Satake category} $\Sat_G^m$ as the idempotent completion of $\Sat_G^{0}$. We note that idea of defining $\Sat_G^0$ already appeared in \cite[\S 4.3]{FKK}. 
\begin{itemize}
\item Objects are pairs $(X,f)$, where $X$ is a disjoint union of perfect smooth projective schemes of the form $\Gr_\mmu$ for $\mmu$ being a sequence of minuscule coweights, and $f$ is a $\bZ$-valued locally constant function on $X$. If $X=\Gr_\mmu$, then $f$ is given by an integer $m$. In this case we denote the pair by $(\Gr_\mmu,m)$. If in addition $m=0$, sometimes we simply denote $(\Gr_\mmu,0)$ by $\Gr_\mmu$.
\item \[\Hom_{\Sat_G^0}((\Gr_{\mmu},m),(\Gr_{\nu_\bullet},n))=\left\{\begin{array}{ll}0 & 2(m-n)\neq d_\mmu-d_{\nu_\bullet}\\
\on{H}(\Gr_{\mmu},\Gr_{\nu_\bullet}) & 2(m-n)=d_\mmu-d_{\nu_\bullet} \end{array}\right.\] 
If we write $(X,f)=\sqcup_i (X_i,m_i)$ and $(Y,g)=\sqcup (Y_j,n_j)$ as disjoint unions of connected objects, then we define
$$\Hom_{\Sat_G^0}((X,f),(Y,g))=\bigoplus_{ij}\Hom_{\Sat_G^0}((X_i,m_i),(Y_j,n_j)).$$
\end{itemize}
Lemma \ref{L:faithful realization} guarantees that this is a well-defined category, and the natural functor 
$$\Sat_G^0\to \on{Mot}_k^*: (X,f)\mapsto \oplus h(X_i)(m_i)$$  is a faithful (but not full) additive functor.
In the sequel, given an irreducible component $Z\subset \Gr_{\mmu}\times_\Gr\Gr_{\nu_\bullet}$ of dimension $\frac{1}{2}(d_{\mmu}+d_{\nu_\bullet})$, we will use $[Z]$ to denote the morphism in $\Sat^0_G$ induced by $Z$. Note that to make use of Lemma \ref{L:faithful realization}, we do need to include various ``Tate twists" of $(\Gr_\mmu,m)$ as objects in $\Sat^0_G$.

Next, we endow $\Sat_G^{0}$ with a monoidal structure (which a priori is different from the usual tensor product structure in $\on{Mot}_k^*$).
Note that the diagonal $\Delta: \Gr_{\mmu}\to \Gr_{\mmu}\times_{\Gr}\Gr_{\mmu}$ gives the identity morphism $1_{(\Gr_\mmu,m)}$ of $(\Gr_\mmu,m)$ in $\Sat_G^{0}$.
We define the tensor product on connected objects as
\[(\Gr_{\mmu},m)\star (\Gr_{\nu_\bullet},n):=(\Gr_{\mmu,\nu_\bullet},m+n),\]
and naturally extend to all objects by linearity.
To define the tensor product on morphisms, it is enough to define $1_{(\Gr_{\mmu},m)}\star [Z]$ and $[Z']\star 1_{(\Gr_{\nu_\bullet},n)}$, where $[Z]\in \Hom_{\Sat_G^{0}}((\Gr_{\nu_\bullet},n'), (\Gr_{\nu'_\bullet},n'))$ (resp. $[Z'] \in \Hom_{\Sat_G^{0}}((\Gr_\mmu,m), (\Gr_{\mu'_\bullet},m'))$) is represented by an irreducible component. We define
$$1_{(\Gr_{\mmu},m)}\star [Z]:=[\Delta\tilde\times Z],\quad [Z']\star 1_{(\Gr_{\nu_\bullet},n)}:=[Z'\tilde\times\Delta],$$
where we use the fact that $Z$ is $K$-invariant to form the twisted product defined via \eqref{E-conv Sch}.

\begin{lem}
\begin{enumerate}
\item The natural (not full) embedding $\Sat_G^0\subset\on{Mot}_k^*$ admits a canonical monoidal structure, i.e. there is a canonical isomorphism
\[h(\Gr_{\mmu,\nu_\bullet})(m+n)\cong h(\Gr_{\mmu})(n)\otimes h(\Gr_{\nu_\bullet})(n),\]
in $\on{Mot}^*_k$ compatible with the monoidal structures of both categories.
\item There is a unique commutativity constraint in $\Sat_G^{0}$ making the above monoidal functor symmetric. 
\item The functor $\Sat_G^0\to\on{Mot}^*_k$ extends to a symmetric monoidal functor 
$$\Sat_G^m\to \on{Mot}_k^*.$$ 
\end{enumerate}
\end{lem}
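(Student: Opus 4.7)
The plan is to handle the three claims in sequence, with most of the work concentrated in part (2).

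For part (1), the canonical isomorphism $h(\Gr_{\mmu,\nu_\bullet})(m+n) \cong h(\Gr_\mmu)(m)\otimes h(\Gr_{\nu_\bullet})(n)$ will come from the fibration structure $\pr_1: \Gr_\mmu \tilde\times \Gr_{\nu_\bullet}\to \Gr_\mmu$. Since $L^+G$ acts on $\Gr_{\nu_\bullet}$ through a finite jet quotient $K_m$, the associated $K_m$-torsor is Zariski-locally trivial on each Schubert cell, so $\pr_1$ is Zariski-locally a product with fiber $\Gr_{\nu_\bullet}$. For minuscule sequences, both $\Gr_\mmu$ and $\Gr_{\nu_\bullet}$ are perfections of smooth projective varieties admitting cellular decompositions (by semi-infinite orbits, as used in Lemma \ref{L:faithful realization}), so the twisted product inherits a cellular decomposition whose cells biject with pairs of cells in the factors. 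A standard motivic Künneth argument for cellular fibrations then yields the desired isomorphism in $\on{Mot}_k^*$. Associativity and unit compatibilities follow formally from the corresponding identifications on the iterated convolution Grassmannian $\Gr_{\mmu,\nu_\bullet,\la_\bullet}$.

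For part (2), uniqueness is immediate from the faithfulness asserted in Lemma \ref{L:faithful realization}. For existence, I must show that the canonical symmetry $\tau$ in $\on{Mot}_k^*$, when transported via the identifications of (1), yields a morphism $\sigma: h(\Gr_{\mmu,\nu_\bullet})\cong h(\Gr_{\nu_\bullet,\mmu})$ that actually lies in the image of $\Sat_G^0$. Equivalently, I need to exhibit a $\bQ$-cycle on $\Gr_{\mmu,\nu_\bullet}\times_\Gr \Gr_{\nu_\bullet,\mmu}$ of dimension exactly $\tfrac12(d_{\mmu,\nu_\bullet}+d_{\nu_\bullet,\mmu})$ representing $\sigma$. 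This is the main obstacle, and it mirrors the central difficulty in the full geometric Satake theorem: in equal characteristic one uses the Beilinson-Drinfeld Grassmannian over $X^2$ together with the swap of the two marked points and specializes the swap cycle to the diagonal; in mixed characteristic one invokes Scholze's diamond theory to produce the analogous degeneration, or alternatively the categorical Gelfand trick of \cite{Zh1} combined with the Lusztig-Vogan identity established in \cite{LY}. Granting such a cycle, the hexagon and other symmetric-monoidal axioms are automatic: they hold in $\on{Mot}_k^*$, and faithfulness of the embedding transports them back to $\Sat_G^0$.

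For part (3), the extension to the idempotent completion is formal. The bifunctor $\star$ on $\Sat_G^0$ is $\bQ$-bilinear and hence sends idempotents to idempotents, so by the universal property of the pseudo-abelian envelope it extends uniquely to a bifunctor $\star: \Sat_G^m\times \Sat_G^m\to \Sat_G^m$. The associator, unit, and commutativity constraints extend in the same way because they are natural with respect to direct summands cut out by idempotents. Finally, since $\on{Mot}_k^*$ is itself pseudo-abelian, the faithful functor $\Sat_G^0\to \on{Mot}_k^*$ factors uniquely through the Karoubi envelope, producing the desired symmetric monoidal functor $\Sat_G^m\to \on{Mot}_k^*$. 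The hard input remains the construction of the commutativity cycle in part (2); every other step is either cellular geometry, formal category theory, or an appeal to the faithfulness of Lemma \ref{L:faithful realization}.
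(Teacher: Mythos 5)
Your formal skeleton matches the paper: part (3) is formal once $\on{Mot}_k^*$ is known to be idempotent complete, uniqueness in (2) comes from the faithfulness in Lemma \ref{L:faithful realization}, and once a cycle of the right dimension is produced the symmetric-monoidal axioms transport back from $\on{Mot}_k^*$ along the faithful embedding. The divergence, and the genuine gap, is in how the commutativity cycle on $\Gr_{\mmu,\nnu}\times_\Gr\Gr_{\nnu,\mmu}$ is actually produced.

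The paper does not treat this as an obstacle on the scale of the full geometric Satake. Its route is to replace (equivariant) cohomology in \cite[\S 2.3, \S 2.4]{Zh1} by (equivariant) rational Chow groups \cite{EG}: since all the relevant $\Gr_\mmu$'s are cellular, the cycle class map $A^*\to \on{H}^*$ is an isomorphism, so the K\"unneth map $A^*(\Gr_\mmu)\otimes_\bQ A^*(\Gr_\nnu)\to A^*(\Gr_{\mmu,\nnu})$ and the Gelfand-trick map $A^*(\Gr_{\mmu,\nnu})\to A^*(\Gr_{\nnu,\mmu})$ come along for free from the cohomological constructions; these Chow classes \emph{are} the candidate cycles for (1) and (2), and their correctness (including the hexagon) is then verified by passing back to cohomology, where \cite{Zh1} has already done the work including the Lusztig--Vogan input. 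Of your three suggested constructions only ``Gelfand trick + Lusztig--Vogan'' points in this direction, and even there you do not need to re-run Lusztig--Vogan; you import it through the cohomological realization. Your other two options would not land in the right category: specialization from a Beilinson--Drinfeld Grassmannian over $X^2$ is unavailable in mixed characteristic in the world of perfect schemes, and Scholze's diamonds are adic-analytic objects that do not obviously yield cycles in a category of pure motives built from perfections of smooth projective varieties. So the missing idea in your write-up is the elementary observation that cellularity lets one carry out the entire construction directly on (equivariant) Chow rings, which is precisely why the motivic Satake works uniformly in both characteristics without fusion, without diamonds, and without re-proving any combinatorics.
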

Since $\on{Mot}^*_k$ is the idempotent complete and by definition $\Sat_G^m$ is the idempotent completion of $\Sat_G^0$, the last statement follows from the other two.
By replacing (equivariant) cohomology in \cite[\S 2.3, \S 2.4]{Zh1} by (equivariant) chow rings (cf. \cite{EG}), one can construct the 
map $A^*(\Gr_{\mmu})\otimes_{\bQ}A^*(\Gr_{\nu_\bullet})\to A^*(\Gr_{\mmu,\nu_\bullet})$, and $A^*(\Gr_{\mmu,\nu_\bullet})\to A^*(\Gr_{\nu_\bullet,\mmu})$, where $A^*(-)$ denotes the rational Chow groups. Since all the varieties admit cellular decomposition, these maps give the candidates of the desired structures in (1) and (2).
By passing to the cohomology and using \cite[\S 2.3, \S 2.4]{Zh1}, one sees that these are indeed the required structures.
Instead of recalling all details, we give an example.

\begin{ex}
\label{R: comm via cycle}
Let $G=\GL_2$.
We consider $\Gr_{\omega_1,\omega_1}$, which classifies a chain of lattices $\{\La_2\subset\La_1\subset\La_0=\mO^2\}$ such that $\La_i/\La_{i+1}$ is $1$-dimensional.  It is isomorphic to the perfection of $\bP(\mO_{\bP^1}(-1)\oplus\mO_{\bP^1}(1))$, as mentioned in Example \ref{quasiminuSch} (see \cite[\S B.3]{Zh1} for a detailed discussion). Recall that it contains a $(-2)$-curve define by the condition $\La_2=\varpi\La_0$, denoted by $\Gr_{\omega_1,\omega_1}^{\omega_2}\subset \Gr_{\omega_1,\omega_1}$. (More precisely, in each case we consider the deperfection given by  $\bP(\mO_{\bP^1}(-1)\oplus\mO_{\bP^1}(1))$. Then the corresponding reduced closed subscheme of $\bP(\mO_{\bP^1}(-1)\oplus\mO_{\bP^1}(1))$ is a $(-2)$-curve.)

The isomorphism $h(\Gr_{\omega_1,\omega_1})\cong h(\Gr_{\omega_1})\otimes h(\Gr_{\omega_1})$ can be realized by a cycle 
$$Z_1+Z_2\subset \Gr_{\omega_1,\omega_1}\times \Gr_{\omega_1}\times \Gr_{\omega_1},$$ 
where 
$$Z_1=\{(z,\pr_1(z),\pr_1(z))\mid z\in \Gr_{\omega_1,\omega_1}\}, \quad Z_2=\{ (z, \pr_1(z), y)\mid z\in \Gr_{\omega_1,\omega_1}^{\omega_2}, y\in \Gr_{\omega_1}\}.$$ 
The automorphism of $\Gr_{\omega_1,\omega_1}$ in $\Sat_G^0$ 
\[c':\Gr_{\omega_1,\omega_1}= \Gr_{\omega_1}\star \Gr_{\omega_1}\stackrel{c}{\cong} \Gr_{\omega_1}\star \Gr_{\omega_1}=\Gr_{\omega_1,\omega_1},\]
induced by the commutativity constraint $c$, is given by 
\begin{equation}
\label{E:comm via cycle}
c'=[\Delta]+[\Gr_{\omega_1,\omega_1}^{\omega_2}\times \Gr_{\omega_1,\omega_1}^{\omega_2}],
\end{equation}
where the diagonal $\Delta$ and $\Gr_{\omega_1,\omega_1}^{\omega_2}\times \Gr_{\omega_1,\omega_1}^{\omega_2}$ are the irreducible components of $\Gr_{\omega_1,\omega_1}\times_{\Gr}\Gr_{\omega_1,\omega_1}$, giving a basis of
$\Hom_{\Sat_G^{0}}(\Gr_{\omega_1,\omega_1},\Gr_{\omega_1,\omega_1})$.
\end{ex}

Let us explain the relation between $\Sat_G^m$ and the usual Satake category. 
Let $\Sat^T_G\subset \on{P}_K(\Gr)$ denote the full subcategory spanned by $\IC_\mu(i),\quad \mu\in\xcoch(T)/W,i\in\bZ$, where $\IC_\mu$ is the intersection cohomology sheaf on $\Gr_\mu$ (whose restriction to the Schubert cell is $\Ql[d_\mu]$). By  \cite[Lemma 5.5.14]{Zh2} (whose argument works in mixed characteristic without change), one can bootstrap the geometric Satake to obtain the following commutative diagram
\[
\begin{CD}
\Sat^T_G@>\cong>>\on{Rep}_{\Ql}(\hat G^T)\\
@VVV@VVV\\
\Sat_{G\otimes\mO_L}@>\cong>>\on{Rep}_{\Ql}(\hat G).
\end{CD}
\]
Here 
\begin{itemize}
\item The group $\hat{G}^T$ is the semi-direct product of $\hat{G}$ with $\bG_m$ with the action of $\bG_m$ on $\hat G$ defined as follows (see \cite[(5.5.10)]{Zh2} for details): Let $\hat G_\ad$ denote the adjoint quotient of $\hat G$, which acts on $\hat G$ by conjugation. Then the composition $\bG_m\to \hat G\to\hat G_\ad$, where the first map is the cocharacter in Remark \ref{R: birth of the dual group} (2), admits a square root $\rho_\ad:\bG_m\to \hat G_\ad$ which induces an action of $\bG_m$ on $\hat G$. Note that $\hat G^T$ is equipped with a Borel subgroup $\hat B^T:=\hat B\rtimes\bG_m$ and a maximal torus $\hat T^T=\hat T\rtimes \bG_m=\hat T\times \bG_m$.
\item The bottom equivalence is as in Theorem \ref{intro:geomSat}.
\item The left vertical functors are the natural pullback functor along $\Gr\otimes\bar k \to \Gr$ and the right vertical functor is the restriction functor along $\hat G\to \hat G^T$.
\end{itemize}

By the semismallness of the convolution map \eqref{conv m2} and the decomposition theorem, there is a fully faithful monoidal functor 
$$\Sat_G^0\otimes\Ql\to \Sat_G^T,\quad (\Gr_{\mmu},m)\mapsto m_*\Ql_{\Gr_{\mmu}}[d_{\mmu}](m)\in \on{P}_K(\Gr),$$  which by \cite[Lemma 2.16]{Zh1} extends to an equivalence 
$$\Sat^m_G\otimes\Ql\cong \Sat^T_G.$$ 
Therefore, after a Koszul sign change of the commutativity constraint as in \cite[(2.4.4)]{Zh1}, we have the equivalence of symmetric monoidal categories
\begin{equation*}
\label{E:geom sat Tate}
\Sat^m_G\otimes\Ql\cong \on{Rep}(\hat{G}^T),
\end{equation*}
which we regard as the motivic geometric Satake.

\begin{rmk}
\label{R:Tate twist}

As already mentioned above, we do need to introduce the Tate twist to define $\Sat_G^0$ as a subcategory of $\on{Mot}^*_k$. Therefore the Tannakian group for $\Sat_G^m\otimes\Ql$ is not the original Langlands dual group $\hat{G}$, but the modified one $\hat G^T$. As being realized in recent years, the group $\hat G^T$ might be the more correct object to use in the formulation of the Langlands correspondence (e.g. see \cite{BG}). In certain cases, one can choose an isomorphism $\hat{G}^T\simeq \hat{G}\times \bG_m$. For example, if $G=\GL_n$, such an isomorphism can be given by 
\begin{equation}
\label{E: dual group GLn}
\hat{G}^T:=\GL_n\rtimes\bG_m\to \GL_n\times \bG_m,\ (A,t)\mapsto (A\on{diag}\{t^{n-1},t^{n-2},\ldots,1\}, t).
\end{equation}
We refer to \cite[\S 5.5]{Zh2} for more detailed discussions of different versions of Langlands dual groups. 

The equivalence induces an isomorphism of $\bZ[v,v^{-1}]$-algebras 
$$K(\Sat^m_G)\cong K(\on{Rep}(\hat{G}^T)),$$ 
where the inclusion of $\bZ[v,v^{-1}]$ to the left hand side is induced by the inclusion of the full subcategory $\{(\Gr_0,m), m\in\bZ\}$ into $\Sat_G^m$ and the inclusion of $\bZ[v,v^{-1}]$ to the right hand side is induced by the inclusion of the full subcategory $\on{Rep}(\bG_m)\subset \on{Rep}(\hat G^T)$. Then specializing to $v=q$ gives a \emph{canonical} Satake isomorphism, which unlike \eqref{Sat isom}, is independent of any choice. In the case of $\GL_2$, this isomorphism together with \eqref{E: dual group GLn} induces the normalized Satake isomorphism $\Sat'^{cl}$ in Remark \ref{R: norm Sat}.
\end{rmk}

\begin{ex}\label{intersection by geom Sat}
Here is the toy model of our following applications of the geometric Satake. 

We consider $G=\GL_2$. Let $\Gr_{\omega_1,\omega_1^*}$ (resp. $\Gr_{\omega_1^*,\omega_1}$) be the moduli classifying a chain of lattices
\begin{equation}\label{Chain 2}
\{\La_2\supset\La_1\subset \La_0=\mO^2\}, \quad \quad \mbox{resp. } \quad \{\La'_2\subset \La'_1\supset \La_0=\mO_2\}
\end{equation}
such that both $\La_0/\La_{1}$ and $\La_2/\La_1$ (resp. $\La'_1/\La_0$ and $\La'_1/\La'_2$) are one-dimensional over $k$.
Note that there are the canonical isomorphisms 
\begin{equation}
\label{E: isomisom1}
\Gr_{\omega_1,\omega_1^*}\cong\Gr_{\omega_1,\omega_1}, \quad \La_2\supset\La_1\subset \La_0\mapsto \varpi\La_2\subset\La_1\subset\La_0,
\end{equation}
and
\begin{equation}
\label{E: isomisom2}
\Gr_{\omega_1^*,\omega_1}\cong\Gr_{\omega_1,\omega_1}, \quad \La'_2\subset\La'_1\supset \La_0\mapsto \varpi\La'_2\subset\varpi\La'_1\subset\La_0,
\end{equation}
so both $\Gr_{\omega_1,\omega_1^*}$ and $\Gr_{\omega_1^*,\omega_1}$ are isomorphic to the perfection of $\bP(\mO_{\bP^1}(-1)\oplus\mO_{\bP^1}(1))$. Note that 
$$\Gr_{\omega_1,\omega_1*}^0:=\Gr_{\omega_1,\omega_1^*}\times_{\Gr}\Gr_0,\quad\quad \mbox{resp. }\quad\Gr_{\omega_1^*,\omega_2}^0:=\Gr_{\omega_1^*,\omega_1}\times_{\Gr}\Gr_0$$
is the closed subscheme of $\Gr_{\omega_1,\omega_1^*}$ (resp. $\Gr_{\omega_1^*,\omega_1}$) classifying those chains in \eqref{Chain 2} with $\La_2=\La_0$ (resp. $\La'_2=\La_0$) and therefore is isomorphic to the perfection of $\bP^1$, which as mentioned in Example \ref{quasiminuSch} is a $(-2)$-curve on the surface. 

We consider the composition of the following map in $\Sat_G^{0}$ given by
\[\Gr_0\xrightarrow{\Gr_{\omega_1,\omega_1^*}^0} \Gr_{\omega_1,\omega_1^*}=\Gr_{\omega_1}\star\Gr_{\omega_1^*}\stackrel{c}{\cong}\Gr_{\omega_1^*}\star\Gr_{\omega_1}=\Gr_{\omega_1^*,\omega_1}\xrightarrow{\Gr_{\omega_1^*,\omega_1}^0}\Gr_0,\]
where we regard $\Gr_{\omega_1,\omega_1^*}^0\in \Hom(\Gr_0, \Gr_{\omega_1,\omega_1^*})$ and $\Gr_{\omega_1^*,\omega_1}^0\in \Hom(\Gr_{\omega_1^*,\omega_1},\Gr_0)$ and $c$ is the commutativity constraint. One can see that this is given by multiplication by $2$, by combining the isomorphism \eqref{E: isomisom1} and \eqref{E: isomisom2}, the commutativity constraint \eqref{E:comm via cycle}, and the fact that $\Gr_{\omega_1,\omega_1}^{\omega_2}$ is a $(-2)$-curve in $\Gr_{\omega_1,\omega_1}$. 

On the other hand, according to the geometric Satake, this map can be computed by the following map 
\[\mathbf{1}\to \on{Std}\otimes \on{Std}^*\cong \on{Std}^*\otimes \on{Std} \to \mathbf{1}\]
in the category of finite dimensional representations of $\GL_2\times \bG_m$, which is also given by multiplying $\dim \on{Std}=2$. To summarize, the intersection numbers between certain cycles in the (convolution) affine Grassmannian can be calculated via some representation theory of the Langlands dual group. 
\end{ex}

Finally, let us briefly mention how to generalize the above construction from $\GL_n$ to a general reductive group $G$ over $\mO$. For a general reductive group, there are not enough minuscule coweights. For example, if $G=E_8$, there is no non-zero minuscule coweight.
So in general, we need to define $\Sat^0_G$ as the category with objects being disjoint unions of $(\Gr_{\mmu},m)$ where $\mmu=(\mu_1,\ldots,\mu_r)$ with $\mu_i$ minuscule or quasi-minuscule (see, for example, \cite[\S 2.2.2]{Zh1} for the notion of quasi-minuscule coweights), and with morphisms between $(\Gr_{\mmu},m)$ and $(\Gr_{\nnu},n)$ given by the same formula as before. 

We need to show that this is a well-defined category (i.e. morphisms can be composed) which admits a faithful embedding into $\on{Mot}_k^*$.
If $\mu$ is quasi-minuscule, $\Gr_\mu$ is not perfectly smooth, but is the perfection a projective cone over some partial flag variety $G/P$, which admits an explicit resolution of singularities by blowing up the singular point of the cone (e.g. see \cite[Lemma 2.12]{Zh1}). Let $\pi:\wGr_\mu\to \Gr_\mu$ denote this ``resolution", which is $K$-equivariant from the construction. It is not difficult to write down the idempotent $p$, which can be represented by a $K$-invariant cycle in $\wGr_\mu\times\wGr_\mu$, such that the \'etale realization of $(\wGr,p,0)$ gives the intersection cohomology of $\Gr_\mu$ (which is a direct summand of $\on{H}^*(\wGr_\mu)$ by the decomposition theorem). 
 Then there is a lemma similar to Lemma \ref{L:faithful realization}, with obvious modifications. Repeating the previous constructions then gives the motivic Satake category $\Sat^m_G$.

\section{The categorical trace construction}
In recent years, the notion of categorical center/trace has played important roles in representation theory. 
We refer to \cite{BN,BFO,Lu3,Lu4,Lu5,Lu6} for the applications to the theory of character sheaves and to \cite{BKV} for the applications to the stable Bernstein center. In the remaining part of this article, we explain a different perspective and application of these categorical constructions, based on a joint work with Liang Xiao \cite{XZ}.

\subsection{The categorical trace}
\label{S: cat trace}
To motivate the definition, let us first briefly recall the notion of the trace (or sometimes called the cocenter) of an algebra.
Let $E$ be a base commutative ring and $A$ an $E$-algebra. A trace function from $A$ to an $E$-module $V$ is an $E$-linear map $f: A\to V$ such that $f(ab)=f(ba)$.
The trace of $A$ is an $E$-module $\Tr(A)$ equipped with a trace function $\tr: A\to \Tr(A)$, such that every trace function $f:A\to V$ uniquely factors as $A\xrightarrow{\tr} \Tr(A)\xrightarrow{\bar{f}}V$ for some linear map $\bar{f}:\Tr(A)\to V$. Clearly, if we define $\Tr(A)$ as the quotient of $A$ by the $E$-submodule spanned by elements of the form $ab-ba,\ a,b\in A$ and let $\tr$ be the quotient map, then $(\Tr(A),\tr)$ is the trace of $A$. Alternatively, we can express $\Tr(A)$ as the colimit of the following diagram in the category of $E$-modules
\begin{equation}
\label{E:HH}
\Tr(A)=\on{colim}(A\otimes A\substack{\longrightarrow\\[-1em] \longrightarrow}  A)
\end{equation}
where the two maps $A\otimes A\to A$ are given $a\otimes b\mapsto ab$ and $a\otimes b\mapsto ba$. Note that this diagram is just the first two terms of the bar complex of $A$ that computes the Hochschild homology of $A$.

Now we move to the categorical setting and introduce the notion of categorical trace of a monoidal category. 
The basic idea is as follows: monoidal categories are algebra objects in the symmetric monoidal $2$-category of certain categories; in general one should define the trace of an algebra object in a symmetric monoidal (higher) category as the colimit of its bar complex \eqref{E:HH}. Thanks to the robust higher category theory, this idea works in a quite general setting.  For example, we refer to \cite{BN} for excellent discussions.

For our later applications, it is enough to introduce this notion at the level of ordinary $E$-linear categories and it is important to construct the categorical trace explicitly. On the other hand, we need to the notion of the twisted categorical trace of
a monoidal endofunctor. So let us spread out the definition in detail. We will work in the $2$-category of essentially small $E$-linear categories.  
Let $(\mC,\otimes,\mathbf{1})$ be an $E$-linear monoidal category and  $\sigma: \mC\to \mC$ a monoidal endofunctor. For every $n$, let
$\mC^{\otimes n}$ denote the $E$-linear category with objects $(X_i)_{i=1,\ldots,n}$, and morphisms $\Hom_{\mC^{\otimes n}}((X_i)_i, (Y_i)_i)=\otimes_i \Hom_\mC(X_i,Y_i)$. Then the left $\sigma$-twisted categorical trace $\Tr_\sigma(\mC)$ is defined to be the $2$-colimit of the diagram
\[
\mC^{\otimes 3}\ \substack{\longrightarrow\\[-1em] \longrightarrow\\[-1em] \longrightarrow} \ \mC^{\otimes 2}\ \substack{\longrightarrow\\[-1em] \longrightarrow} \ \mC,
\]
where the functors $\mC^{\otimes 2}\to \mC$ are given by $(X,Y)\mapsto X\otimes Y$ and $Y\otimes \sigma X$ respectively, and $\mC^{\otimes 3}\to \mC^{\otimes 2}$ are given by $(X,Y,Z)\mapsto (X,Y\otimes Z), (Y, Z\otimes \sigma X)$ and $(X\otimes Y,Z)$ respectively.

More concretely, let $\mD$ be a (plain) $E$-linear category.
A left $\sigma$-twisted \emph{trace functor} from $\mC$ to $\mD$ is a functor $F:\mC\to\mD$, together with a family of canonical isomorphisms (functorial in each argument)
\[\al_{X,Y}: F(X\otimes Y)\cong F(Y\otimes \sigma X)\]
such that the following diagram is commutative
\begin{equation}
\label{E:cond trace}
\xymatrix{
F((X\otimes Y)\otimes Z)\ar^{\al_{X\otimes Y,Z}}[r]\ar_\cong[d] & F(Z\otimes \sigma (X\otimes Y))\ar^{\cong}[r]&  F((Z\otimes \sigma X)\otimes \sigma Y)  \\
F((X\otimes (Y\otimes Z))\ar^{\al_{X,Y\otimes Z}}[r] & F((Y\otimes Z)\otimes \sigma X)\ar^{\cong}[r]&  F(Y\otimes (Z\otimes \sigma X)). \ar_{\al_{Y,(Z\otimes\sigma X)}}[u]
}
\end{equation}
Note that it follows that the following two diagrams are commutative
\begin{equation}
\label{E:cond trace cor}
\xymatrix{
F(\mathbf{1}\otimes X)\ar_{\cong}[d]\ar^{\al_{\mathbf{1},X}}[r]& F(X\otimes \sigma\mathbf{1})\ar^\cong[d]&  F(X\otimes Y)\ar^{\al_{Y,\sigma X}\al_{X,Y}}[r]\ar_\cong[d]& F(\sigma X\otimes \sigma Y)\ar^\cong[d]\\
 F(X)\ar^-\cong[r]& F(X\otimes \mathbf{1})&  F((X\otimes Y)\otimes\mathbf{1})\ar^{\al_{X\otimes Y,\mathbf{1}}} [r]& F(\mathbf{1}\otimes\sigma(X\otimes Y)).
}\end{equation}
Let $\on{Func}^{\tr_\sigma}(\mC,\mD)$ denote the category of left $\sigma$-twisted trace functors.  Then $\Tr_\sigma(\mC)$ if exists, is the unique  (up to a unique equivalence) category such that for any (plain) $E$-linear category $\mD$,
\[\on{Func}^{\tr_\sigma}(\mC,\mD)\cong \on{Func}(\Tr_\sigma(\mC),\mD),\]
where $\on{Func}(-,-)$ denote the category of $E$-linear functors between $E$-linear categories.

For our application, we need an explicit description of hom spaces of $\Tr_\sigma(\mC)$. Usually, describing the hom spaces of a colimit diagram of categories is difficult. However, in our case this is possible under a mild assumption on $\mC$.
\begin{prop}
\label{L:trace cat}
Assume that $\mC$ is a small $E$-linear category, and that every object in $\mC$ admits a left dual object. Then $(\Tr_\sigma(\mC),\tr)$ exists. If $\mC$ is additive, so is $\Tr_\sigma(\mC)$.
\end{prop}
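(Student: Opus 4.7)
The plan is to construct $\Tr_\sigma(\mC)$ by hand as an $E$-linear category whose objects are those of $\mC$, using a coend formula for the morphism spaces that is made representable (and whose cyclicity cells are made invertible) by the left-duality assumption; the universal property is then checked directly.

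I would set $\on{Ob}(\Tr_\sigma(\mC)):=\on{Ob}(\mC)$, let $\tr$ be the identity on objects, and define
\[
\Hom_{\Tr_\sigma(\mC)}(X,Y):=\int^{Z\in\mC}\Hom_\mC(X\otimes\sigma Z,\,Z\otimes Y),
\]
presented as the quotient of $\bigoplus_{Z}\Hom_\mC(X\otimes\sigma Z,Z\otimes Y)$ by the coend relations attached to each $\varphi:Z\to Z'$. Composition of $[W,g:X\otimes\sigma W\to W\otimes Y]$ with $[Z,f:Y\otimes\sigma Z\to Z\otimes Y']$ is the class $[W\otimes Z,\,(\id_W\otimes f)\circ(g\otimes\id_{\sigma Z})]$ (using the monoidality $\sigma(W\otimes Z)=\sigma W\otimes\sigma Z$); associativity and well-definedness are routine. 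The identity at $X$ is $[\mathbf{1},\id_X]$, and $\tr$ sends $f:X\to Y$ to $[\mathbf{1},f]$. The cyclicity cell $\alpha_{X,Y}:\tr(X\otimes Y)\to\tr(Y\otimes\sigma X)$ is the coend class $[X,\id_{X\otimes Y\otimes\sigma X}]$, and the hexagonal coherence \eqref{E:cond trace} reduces, in both paths, to comparing two representations of $[X\otimes Z,\id]$ in the coend.

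The essential use of the left-duality hypothesis is in showing that $\alpha_{X,Y}$ is an isomorphism: given a left dual $X^\vee$ with unit $\eta:\mathbf{1}\to X\otimes X^\vee$ and counit $\epsilon:X^\vee\otimes X\to\mathbf{1}$, I would construct $\alpha_{X,Y}^{-1}$ as the coend class over $Z=X^\vee$ whose representative combines $\sigma\eta$, $\epsilon$, and the structural isomorphisms of $\mC$ to produce a morphism $(Y\otimes\sigma X)\otimes\sigma X^\vee\to X^\vee\otimes(X\otimes Y)$; the snake identities, interpreted modulo the coend relations applied along $\eta$ and $\epsilon$, then yield $\alpha\circ\alpha^{-1}=\id$ and $\alpha^{-1}\circ\alpha=\id$. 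Coupled with the monoidal axioms, this also guarantees that every formal composition of cyclicity cells and their inverses collapses to a canonical class $[Z,f]$, so the coend formula genuinely computes the hom-spaces.

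For the universal property, given a trace functor $(F,\beta):\mC\to\mD$, I would define $\bar F:\Tr_\sigma(\mC)\to\mD$ to be $F$ on objects and send a class $[Z,f]$ to the composite $F(X)\to F(Y)$ built from $F(\id\otimes\sigma\eta_Z)$, then $F(f\otimes\id)$, then the cyclicity cell $\beta_{Z,\,Y\otimes\sigma Z^\vee}$ of $F$, and finally $F(\id\otimes\sigma\epsilon_Z)$, using a left dual $Z^\vee$ of $Z$. Naturality of $\beta$ ensures well-definedness modulo the coend relations, \eqref{E:cond trace} implies functoriality, and \eqref{E:cond trace cor} gives $\bar F\circ\tr\cong F$ as trace functors; uniqueness follows because $\tr$ is the identity on objects and $\bar F(\alpha)=\beta$ is forced. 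Additivity is inherited formally: the coend is a quotient of a direct sum of $E$-modules by $E$-linear relations, composition is $E$-bilinear, and finite biproducts in $\mC$ descend because the coend commutes with finite direct sums in each entry. The main obstacle throughout is the invertibility of $\alpha_{X,Y}$: without left duals, nothing forces the class $[X,\id]$ to be invertible, so the construction would at best satisfy a weaker universal property for lax trace functors; the snake-identity calculation producing $\alpha^{-1}$ is the only genuinely non-formal step, and it is precisely where the left-dual hypothesis is used.
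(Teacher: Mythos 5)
Your coend formula and composition rule do match the paper's intermediate construction $\Tr'_\sigma(\mC)$: via the left-dual adjunction $\Hom_\mC(X\otimes\sigma Z, Z\otimes Y)\cong\Hom_\mC(Z^*\otimes X\otimes\sigma Z, Y)$ your hom-space $\int^Z\Hom(X\otimes\sigma Z, Z\otimes Y)$ agrees with the reduced form of the paper's $\bigoplus_{(V,W)}\Hom(X, V\otimes W)\otimes\Hom(W\otimes\sigma V, Y)/\!\!\sim$. The place your argument breaks down is the claim that the cyclicity cell $\alpha_{X,Y}$ is \emph{already} invertible in this category. You propose $\alpha_{X,Y}^{-1}$ as the class $[X^\vee,\psi]$ for some $\psi: (Y\otimes\sigma X)\otimes\sigma X^\vee\to X^\vee\otimes(X\otimes Y)$ built from $\sigma\eta$, $\epsilon$ and unitors. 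But no such $\psi$ can be assembled from left duality alone: to remove the $\sigma X\otimes\sigma X^\vee$ in the source you would need an arrow $\sigma X\otimes\sigma X^\vee\to\mathbf{1}$, while left duality of $\sigma X$ only gives $\sigma\epsilon:\sigma X^\vee\otimes\sigma X\to\mathbf{1}$ (the other order); and to produce the $X^\vee\otimes X$ in the target you would need $\mathbf{1}\to X^\vee\otimes X$, while left duality only gives $\eta:\mathbf{1}\to X\otimes X^\vee$. Both available structure maps go in exactly the wrong direction, and without a pivotal (or symmetric, or ribbon) structure there is no canonical identification $X^{\vee\vee}\cong X$ to fix this. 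So $[X,\id]$ has no visible inverse in the coend category for a general $\mC$ with only left duals.

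The paper's proof does not attempt such an inverse. Instead it constructs $\Tr'_\sigma(\mC)$ (your category) and then passes to the Gabriel--Zisman localization at the multiplicative system $\{\gamma_X^n : X\in\mC,\ n\geq 0\}$, where $\gamma_X=\alpha_{X,\mathbf{1}}:\widetilde{X}\to\widetilde{\sigma X}$; this is what forces all $\alpha_{X,Y}$ to become isomorphisms in $\Tr_\sigma(\mC)$. Omitting this step leaves you exactly in the situation you yourself flag: $\tr':\mC\to\Tr'_\sigma(\mC)$ is only a lax trace functor, and the universal property for genuine (strong) trace functors does not hold. The left-dual hypothesis is used in the paper for a different purpose than you assert: it establishes the ``fraction'' identities $S_{u,v}=S_{\coev_V\otimes\id_X,\,v_u}=S_{u_v,\,\id_Y\otimes\ev_{\sigma V}}$ (the co-Yoneda reduction of the two-variable coend to one variable) and makes the composition formula $S_{u_2,v_2}\circ S_{u_1,v_1}$ well defined via $\coev_{V_2}$ — not to invert $\alpha$. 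To repair your proof you should keep the coend category as an intermediate $\Tr'_\sigma(\mC)$, verify that $\{\gamma_X^n\}$ is a left (or right) multiplicative system using the fraction relations, and define $\Tr_\sigma(\mC)$ as the localization; the additivity and universal-property arguments then go through essentially as you wrote them.
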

Recall that in a monoidal category, the left dual of an object $V$ is an object $V^*$ equipped with morphisms 
$$\coev_V:\mathbf{1}\to V\otimes V^*,\quad \ev_V: V^*\otimes V\to \mathbf{1}$$ 
such that both of the following compositions are the identity map
\[V\cong \mathbf{1}\otimes V\xrightarrow{\coev_V\otimes\id_{V}}(V\otimes V^*)\otimes V\cong V\otimes (V^*\otimes V)\xrightarrow{\id_{V}\otimes\ev_V}V\otimes\mathbf{1}\cong V\]
\[V^*\cong V^*\otimes\mathbf{1}\xrightarrow{\id_{V^*}\otimes\coev_V} V^*\otimes (V\otimes V^*)\cong (V^*\otimes V)\otimes V^*\xrightarrow{\ev_V\otimes\id_{V^*}}\mathbf{1}\otimes V^*\cong V^*.\]
The triple $(V^*,\coev_V,\ev_V)$ is unique up to a unique isomorphism.

\begin{proof}
We give a detailed proof since it will serve as a motivation for the construction of $S$-operators in the next subsection. To simplify notations, we suppress the associativity constraint in all formulas and diagrams appearing below.

We first define the category $\Tr'_\sigma(\mC)$  whose objects are the same as those of $\mC$. To avoid confusion, an object $X\in\mC$ will be denoted by $\widetilde{X}$ when it is regarded as an object of $\Tr'_\sigma(\mC)$. We define the space of morphisms as 
\begin{equation*}
\label{E:trace hom}
\Hom_{\Tr'_\sigma(\mC)}(\widetilde{X},\widetilde{Y}):=\big(\bigoplus_{(V,W)\in \mC\times \mC}\Hom(X, V\otimes W)\otimes_E \Hom(W\otimes \sigma V, Y)\big)/\sim.
\end{equation*}
where the equivalence relation is generated as follows: if we denote by $S_{u,v}$ the element in $\Hom_{\Tr'_\sigma(\mC)}(\widetilde{X},\widetilde{Y})$ corresponding to $u: X\to V\otimes W$ and $v:W\otimes \sigma V\to Y$, then $S_{u,v}=S_{u',v'}$ if there is the following commutative diagram
\[\xymatrix{
X\ar^-{u}[r] \ar_{u'}[dr]& V\otimes W\ar[d] & W\otimes \sigma V\ar[d]\ar^-{v}[r] & Y\\
& V'\otimes W' & W'\otimes \sigma V'\ar_{v'}[ur] &
},\]
where the vertical maps are induced by an element in $\Hom_\mC(V,V')\otimes_E\Hom_{\mC}(W,W')$. In particular, it follows from the two commutative diagrams 
\begin{equation}
\label{E:frac}
\xymatrix{
&V\otimes V^*\otimes X\ar^{\id_V\otimes u^*}[d]& V^*\otimes X\otimes \sigma V\ar^{v_u}[dr]\ar_{u^*\otimes \id_{\sigma V}}[d]&\\
X\ar^{\coev_V\otimes\id_X}[ur]\ar^{u}[r]\ar_{u_v}[dr]&V\otimes W\ar^{\id_V\otimes v^*}[d]&W\otimes \sigma V\ar^{v}[r]\ar_{v^*\otimes\id_{\sigma V}}[d]& Y\\
&V\otimes Y\otimes \sigma V^*& Y\otimes \sigma V^*\otimes \sigma V\ar_{\id_Y\otimes\ev_{\sigma V}}[ur]&
}
\end{equation}
that $S_{\coev_V\otimes\id_X, v_u}=S_{u,v}=S_{u_v,\id_Y\otimes\ev_{\sigma V}}$.  
Here, $V^*$ is the left dual of $V$, $u^*: V^*\otimes X\to W$ dual to $u$ and $v^*:W\to Y\otimes \sigma V^*$ dual to $v$, and
$u_v= (\id_V\otimes v^*)\circ u$ and $v_u= v\circ(u^*\otimes \id_{\sigma V})$.

We also need to explain the composition of morphisms in $\Tr'_\sigma(\mC)$. Given $u_1: X\to V_1\otimes W_1$ and $v_1: W_1\otimes \sigma V_1\to Y$, and given $u_2: Y\to V_2\otimes W_2$ and $v_2: W_2\otimes \sigma V_2\to Z$, let $S_{u_1,v_1}\in \Hom_{\Tr'_\sigma(\mC)}(\widetilde{X},\widetilde{Y})$ and $S_{u_2,v_2}\in \Hom_{\Tr'_\sigma(\mC)}(\widetilde{Y},\widetilde{Z})$ be the corresponding elements. We define $S_{u_2,v_2}\circ S_{u_1,v_1}$ as $S_{u,v}$, where 
\[u: X\xrightarrow{u_1} V_1\otimes W_1\xrightarrow{\id_{V_1}\otimes\coev_{V_2}\otimes\id_{W_1}}V_1\otimes V_2\otimes V^*_2\otimes W_1,\] 
\[v: V^*_2\otimes W_1\otimes \sigma V_1\otimes \sigma V_2\xrightarrow{\id_{V^*_2}\otimes v_1\otimes\id_{\sigma V_2}}V^*_2\otimes Y\otimes \sigma V_2\xrightarrow{(v_2)_{u_2}} Z.\]

One checks immediately that the composition is independent of the choice of $(u_1,v_1)$.  On the other hand, using \eqref{E:frac}, it is easy to see that $S_{u,v}=S_{u',v'}$, where
\[
u': X\xrightarrow{(u_1)_{v_1}} V_1\otimes Y\otimes \sigma V^*_1\xrightarrow{\id_{V_1}\otimes u_2\otimes \id_{\sigma V^*_1}} V_1\otimes V_2\otimes W_2\otimes \sigma V^*_1, 
\]
\[
v': W_2\otimes \sigma V^*_1 \otimes \sigma V_1\otimes \sigma V_2 \xrightarrow{\id_{W_2}\otimes\ev_{\sigma V_1}\otimes\id_{\sigma V_2}}W_2\otimes \sigma V_2\xrightarrow{v_2} Z.
\]

Using this expression, one checks that the composition is also independent of the choice of $(u_2,v_2)$. 
In addition, one checks immediately  the identity map of $\widetilde{X}$ can be represented by the canonical isomorphisms $X\cong  \mathbf{1}\otimes X$ and $X\otimes \sigma \mathbf{1}\cong X\otimes\mathbf{1}\cong X$.

We have defined $\Tr'_\sigma(\mC)$ as a category.  Given $X$ and $Y$, there is a canonical morphism
\[\al_{X,Y}:\widetilde{X\otimes Y}\to \widetilde{Y\otimes \sigma X}\]
given by $S_{u,v}$, where both $u$ and $v$ are the identity map. 
In particular, for every $X$, there is a canonical morphism
$$\ga_{X}:\widetilde{X}\cong \widetilde{X\otimes\mathbf{1}}\xrightarrow{\al_{X,\mathbf{1}}} \widetilde{\mathbf{1}\otimes \sigma X}\cong \widetilde{\sigma X}.$$
One checks from the construction that the collection $\{\al_{X,Y}\}$ satisfy the commutative diagram \eqref{E:cond trace}, and therefore \eqref{E:cond trace cor} also holds.
We define $\Tr_\sigma(\mC)$ as the localization of $\Tr'_\sigma(\mC)$ with respect to the multiplicative system $\{\ga^n_X,\ X\in \mC, n\in \bZ_{\geq 0}\}$ (so that $\ga_X$ becomes an isomorphism in $\Tr_\sigma(\mC)$). It follows that all $\al_{X,Y}$ become isomorphisms the $\Tr_\sigma(\mC)$.

Clearly, there is a $\sigma$-twisted trace functor $\tr_\sigma: \mC\to \Tr_\sigma (\mC)$ sending $X$ to $\widetilde{X}$ and $f:X\to Y$ to $S_{u,v}$ where $u: X\cong \mathbf{1}\otimes X$ and $v: X\otimes\sigma\mathbf{1}\cong X\xrightarrow{f} Y$. In addition, if $F: \mC\to \mD$ is a $\sigma$-twisted trace functor, then it induces a functor $\Tr_\sigma(\mC)\to \mD$ sending $\widetilde X$ to $F(X)$ and $S_{u,v}$ to $F(v)\circ \alpha_{V,W}\circ F(u)$. 

Finally, notice that from the construction, there is a canonical isomorphism $\widetilde{X}\oplus\widetilde{Y}\cong\widetilde{X\oplus Y}$ in $\Tr'_\sigma(\mC)$. If follows that if $\mC$ is additive, so is $\Tr'_\sigma(\mC)$ and $\Tr_\sigma(\mC)$. The proposition is proven.
\end{proof}

\begin{rmk}
The idea of defining the composition $S_{u_2,v_2}\circ S_{u_1,v_1}$ can be explained by the following diagram
\begin{tiny}
\begin{equation*}
\label{E: comp in tr}
\xymatrix{
                     &                                                                                          &V_1\otimes V_2\otimes V^*_2\otimes W_1\ar@{-->}[r]&V_2\otimes V_2^*\otimes W_1\otimes \sigma V_1\ar[dr]\ar@{-->}[r]& V_2^*\otimes W_1\otimes \sigma V_1\otimes \sigma V_2\ar^-{\id_{V^*_2}\otimes v_1\otimes\id_{\sigma V_2}}[dr] &                                                    &\\
 X\ar^-{u_1}[r]&V_1\otimes W_1\ar^-{\id\otimes\coev_{V_2}\otimes\id}[ur]\ar@{-->}[r]& W_1\otimes \sigma V_1\ar[ur]\ar[r]                      &                 Y\ar[r]                                                   & V_2\otimes V_2^*\otimes Y    \ar@{-->}[r]                              & V_2^*\otimes Y\otimes \sigma V_2 \ar^-{(v_2)_{u_2}}[r]& Z. \\                                            
}
\end{equation*}
\end{tiny}

Namely, we can first replace $S_{u_2,v_2}$ by $S_{\coev_{V_2}\otimes \id_Y,(v_2)_{u_2}}$. Then $S_{\coev_{V_2}\otimes \id_Y,(v_2)_{u_2}}\circ S_{u_1,v_1}$  has to be the one given in the proof since the triangle in the middle is commutative in $\mC$ and the dotted arrows should be isomorphisms in $\Tr_\sigma(\mC)$.
\end{rmk}

\begin{rmk}
Of course, there is a dual notion of a right trace functor, i.e. a functor $F:\mC\to \mD$ equipped with a family of functorial isomorphisms $\beta_{X,Y}: F(X\otimes Y)\cong F(\sigma Y\otimes X)$ satisfying a condition analogous to \eqref{E:cond trace}, and therefore the notion of the right $\sigma$-twisted trace category. The above construction and the following discussions have counterparts for these dual notions. 

To simply the exposition, in the sequel, we will call left trace functors by trace functors and the left $\sigma$-twisted trace category of $\mC$ simply by the $\sigma$-twisted trace category of $\mC$.
\end{rmk}

Here is a concrete example of the above construction. 

\begin{ex}
\label{E: trace of rep}
Let $H$ be a split reductive algebraic group over a field $E$ of characteristic zero, equipped with an automorphism $\sigma: H\to H$. Consider the $\sigma$-twisted conjugation of $H$ on itself given by the formula \eqref{E:twist conj},
which is equivalent to the usual conjugation of $H$ on the coset $H\sigma\subset H\rtimes\langle\sigma\rangle$.
Let $\frac{H}{c_\sigma H}$ denote the corresponding quotient stack. 
We regard the category $\on{Rep}_E(H)$ of finite dimensional representations of $H$ (over $E$) as the category of coherent sheaves on the classifying stack $\mathbf BH$. 
Then the pullback functor $\pi^*: \on{Rep}_E(H)\to \Coh(\frac{H}{c_\sigma H})$ has a $\sigma$-twisted trace functor structure, 
which induces
\[\Tr_\sigma(\on{Rep}_E(H))\cong \on{Coh}_{fr}^H(H\sigma)\subset \on{Coh}(\frac{H}{c_\sigma H}),\]
where $\Coh_{fr}^H(H\sigma)$ denotes the full subcategory of $\on{Coh}(\frac{H}{c_\sigma H})$ spanned by those $\widetilde V(:=\pi^*V)$. This can be seen as follows. 
Via descent, we may regard $\widetilde V$ as the trivial vector bundle $H\times V$ on $H$, with the diagonal $H$-equivariant structure. In addition,
for a representation $H\to \GL(V)$, let $\sigma V$ denote the $\sigma$-twist of $V$, i.e. the representation of $H$ defined by $H\xrightarrow{\sigma^{-1}}H\to\GL(V)$. Let us formally denote the identity map $V\to \sigma V$ by $\sigma$, so that $h\cdot \sigma=\sigma\cdot \sigma^{-1}(h):V\to\sigma V$.
Then for a morphism $S_{u,v}$ in $\Tr'_\sigma(\on{Rep}_E(H))$, given by $u:X\to V\otimes W$ and $v:W\otimes \sigma V\to Y$, we define a morphism $\widetilde{X}\to\widetilde{Y}$ in $\Coh_{fr}^H(H\sigma)$ whose restriction to the fiber over $h\in H$ is given by
\[
\widetilde{X}|_h=X\xrightarrow{u} V\otimes W\xrightarrow{h\sigma\otimes\id}\sigma V\otimes W\cong W\otimes \sigma V\xrightarrow{v} Y=\widetilde{Y}|_h.
\]
This gives a natural functor $\Tr_\sigma(\on{Rep}_E(H))\to \on{Coh}_{fr}^H(H\sigma)$. By definition, the functor is essentially surjective and the Peter-Weyl theorem implies that it is also fully faithful, and therefore is an equivalence.

Assume that $\sigma=\id$. Note that since $\on{Rep}_E(H)$ is symmetric monoidal, the identity functor is a trace functor, and therefore induces a functor $\Tr_\sigma(\on{Rep}_E(H))\cong \on{Coh}_{fr}^H(H\sigma)\to \on{Rep}_E(H)$. One checks that this is nothing but the restriction of sheaves on $\frac{H}{cH}$ the unit of $H$.
\end{ex}

We continue our general discussion of the categorical trace. Assume that $\mC$ is as in Proposition \ref{L:trace cat}.
Let us assume momentarily that $\sigma$ is the identity functor. In this case, we denote $\Tr_\sigma(\mC)$ by $\Tr(\mC)$. For every endomorphism $f: V\to V$ in $\mC$, we denote $\coev_f:\mathbf{1}\to V\otimes V^*$ to be the morphism given by $\mathbf{1}\xrightarrow{\coev_V}V\otimes V^*\xrightarrow{f\otimes\id_{V^*}}V\otimes V^*$. Then
there is an element
\[S_{\coev_f,\ev_V}\in \End_{\Tr(\mC)}\widetilde{\mathbf{1}}.\]
In particular, if $f=\id$, we denote it by
\begin{equation}
\label{E:abs S}
S_V:=S_{\coev_{V},\ev_{V}}\in \End_{\Tr(\mC)}\widetilde{\mathbf{1}}.
\end{equation}
This element depends only on the isomorphism classes of $V$. It follows from the construction that
\[S_{\mathbf{1}}=\id,\quad S_{V'}\cdot S_{V}=S_{V\otimes V'},\quad S_{V\oplus V'}=S_V+S_{V'}.\]
Thus if $\mC$ is additive, there is a canonical homomorphism 
\begin{equation}
\label{E: K to End}
K^{\oplus}(\mC)^{\on{op}}\to \End_{\Tr(\mC)}\widetilde{\mathbf{1}},
\end{equation}
where $K^{\oplus}(\mC)$ denotes the split Grothendieck ring of $\mC$: as an abelian group, it is generated by objects in $\mC$ modulo the relations $[X]+[Y]=[X\oplus Y]$, with the ring structure given by the tensor product.

\begin{rmk}
Note that if $\mC$ is symmetric monoidal, then the identity functor of $\mC$ has a natural trace functor structure provided by the commutativity constraint, and therefore factors as $\mC\to \Tr(\mC)\to \mC$. It is clear that the induced map $\End_{\Tr(\mC)}(\widetilde{\mathbf{1}})\to \End_\mC(\mathbf{1})$ sends $S_{\coev_f,\ev_V}$ to the usual trace $\tr_V(f)$ of $f$ and $S_V$ to $\on{rank}(V)$, the rank of $V$  (see, for example, \cite[(1.7.3)]{DM} for the definitions of these notions).
\end{rmk}

The above construction admits the following generalization. Let $X\in \mC$, $V\in \mC$, and let $c_{X,V}: X\otimes V\simeq V\otimes X$ be a given isomorphism. Then we have an element $S_{c_{X,V}}\in \End_{\Tr(\mC)}\widetilde{X}$ given by
\[X\xrightarrow{\coev_V\otimes\id_X}V\otimes V^*\otimes X,   \quad V^*\otimes X\otimes V\xrightarrow{\id_{V^*}\otimes c_{X,V}} V^*\otimes V\otimes X\xrightarrow{\ev_V\otimes \id_X} X.\]
If we define the centralizer category of $X$ in $\mC$ as the category $\mZ_{\mC}(X)$ whose objects are pairs
$\{V\in\mC, c_{X,V}: X\otimes V\simeq V\otimes X\}$, and whose morphisms are those morphisms $V\to V'$ in $\mC$ compatible with $c_{X,V}$ and $c_{X,V'}$, then $\mZ_{\mC}(X)$ is a monoidal category, and if $\mC$ is in addition additive, so is $\mZ_{\mC}(X)$. It follows that we have a morphism
\begin{equation}
\label{E: K to End2}
K^{\oplus}(\mZ_{\mC}(X))^{\on{op}}\to \End_{\Tr(\mC)}(\widetilde X)
\end{equation}
In particular, if $\mC$ is an additive braided monoidal category, there is a natural monoidal functor $\mC\to \mZ_\mC(X)$ for every $X$. Therefore, in this case we have 
$$K^{\oplus}(\mC)^{\on{op}}\to \End_{\Tr(\mC)}\widetilde{X}.$$

Now for general $\sigma$, let $\mC^\sigma$ denote the category of $\sigma$-equivariant objects in $\mC$. I.e. objects are pairs $(X,\phi)$ consisting of an object $X\in\mC$ and an isomorphism $\phi: \sigma X\simeq X$, and morphisms between $(X,\phi)$ and $(X',\phi')$ are those morphisms in $\mC$ compatible with $\phi$ and $\phi'$. Note that $\mC^\sigma$ is a natural monoidal category, and if every object in $\mC$ admits a left dual, so is every object in $\mC^\sigma$. 
We can apply the above discussions to $\mC^\sigma$.
The forgetful functor $\mC^\sigma\to\mC$ induces a functor
\[\Tr(\mC^\sigma)\to \Tr_\sigma(\mC).\]
In particular, we have
\begin{equation}
\label{E: K to End3}
K^{\oplus}(\mC^\sigma)^{\on{op}}\to \End_{\Tr_\sigma(\mC)}(\widetilde{\mathbf 1}),
\end{equation}
Concretely, for $(X,\phi: \sigma X\simeq X)\in\mC^\sigma$, the corresponding endomorphism of $\widetilde{\mathbf 1}$ in $\Tr_\sigma(\mC)$ is given by $S_{\coev_X, \ev_X(1\otimes\phi)}$.

Now, we assume that $E$ is an algebraically closed field, and that $\mC$ is semisimple abelian in which the unit object $\mathbf{1}$ is irreducible. 
Then, $\mC^\sigma$ is an abelian monoidal category. 

\begin{prop}
\label{L: K iso End}
Under the above assumption, the map \eqref{E: K to End3} induces an isomorphism
\[
(K(\mC^\sigma)^{\on{op}}\otimes E)/I\cong \End_{\Tr_\sigma(\mC)}(\widetilde{\mathbf 1}),
\]
where $I$ is the ideal of $K(\mC^\sigma)^{\on{op}}\otimes E$ generated by $[(X,a\phi)]-a[(X,\phi)]$, with $(X,\phi)$ irreducible in $\mC^\sigma$ and $a\in E^\times$.
\end{prop}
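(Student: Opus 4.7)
The plan is to describe both sides explicitly in terms of $\sigma$-orbits on the set $\Omega$ of isomorphism classes of simple objects of $\mC$, and then to verify the isomorphism; I will assume $\sigma$ is a monoidal autoequivalence, which is the case in the intended applications. First I check that the map descends to the quotient by $I$: it sends $(X,\phi)$ to $S_{\coev_X,\,\ev_X\circ(\id_{X^*}\otimes\phi)}$, and because morphism spaces in $\Tr'_\sigma(\mC)$ are built as $E$-tensor products of $\Hom$-spaces, the symbol $S_{u,v}$ is $E$-bilinear; so rescaling $\phi$ by $a\in E^\times$ rescales the endomorphism by $a$, and the generators $[(X,a\phi)]-a[(X,\phi)]$ of $I$ go to zero. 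Additivity in $(X,\phi)$ follows from the isomorphism $\widetilde{X\oplus Y}\cong \widetilde X\oplus\widetilde Y$, and multiplicativity for the opposite ring structure from the composition rule for $S$-operators (both established in the proof of Proposition~\ref{L:trace cat}).

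Next I give a common description of both sides in terms of $\Omega$. Since $\sigma$ permutes $\Omega$, for each orbit $\mathcal O=\{V_0,\sigma V_0,\dots,\sigma^{k-1}V_0\}$ with $\sigma^k V_0\cong V_0$ I fix such an isomorphism and build the simple equivariant object $(X_{\mathcal O},\phi_{\mathcal O})\in\mC^\sigma$, where $X_{\mathcal O}=\bigoplus_{i=0}^{k-1}\sigma^i V_0$ and $\phi_{\mathcal O}$ is the cyclic shift. Schur's lemma (valid because $E$ is algebraically closed and $\mC$ is semisimple with $\mathbf 1$ irreducible) shows every simple of $\mC^\sigma$ arises this way, uniquely up to rescaling $\phi_{\mathcal O}$ by $E^\times$, so $(K(\mC^\sigma)^{\on{op}}\otimes E)/I$ is the free $E$-module on $\Omega$ with basis $\{[(X_{\mathcal O},\phi_{\mathcal O})]\}_{\mathcal O\in\Omega}$. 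For the target, equation~\eqref{E:frac} with $X=Y=\mathbf 1$ expresses every endomorphism of $\widetilde{\mathbf 1}$ as $S_{\coev_V,\,\ev_V\circ(\id_{V^*}\otimes\psi)}$ for some $V\in\mC$ and some (not necessarily invertible) morphism $\psi\colon\sigma V\to V$; decomposing $V$ into $\sigma$-orbit isotypic blocks, Schur forces $\psi$ to respect the decomposition, and within each block the equivalence relation of Proposition~\ref{L:trace cat} together with the localization inverting $\gamma_V$ identifies the contribution with a scalar multiple of the image of $[(X_{\mathcal O},\phi_{\mathcal O})]$, yielding surjectivity.

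The main obstacle is injectivity, i.e.\ showing that the basis elements $[(X_{\mathcal O},\phi_{\mathcal O})]$ have $E$-linearly independent images in $\End_{\Tr_\sigma(\mC)}(\widetilde{\mathbf 1})$. My plan is to construct, for each orbit $\mathcal O$, a trace functor $F_{\mathcal O}\colon\mC\to\mathrm{Vect}_E$ by $F_{\mathcal O}(V):=\Hom_{\mC}(V,X_{\mathcal O})$; the required natural isomorphism $F_{\mathcal O}(V\otimes W)\cong F_{\mathcal O}(W\otimes\sigma V)$ is assembled from the duality isomorphism $\Hom(V\otimes W,X_{\mathcal O})\cong\Hom(V,X_{\mathcal O}\otimes W^*)$, the action of $\sigma$ on morphisms, and the structure $\phi_{\mathcal O}\colon\sigma X_{\mathcal O}\cong X_{\mathcal O}$, with the coherence diagram~\eqref{E:cond trace} a routine check. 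By the universal property of $\Tr_\sigma(\mC)$ this descends to a functional $\chi_{\mathcal O}\colon\End_{\Tr_\sigma(\mC)}(\widetilde{\mathbf 1})\to\End_E(F_{\mathcal O}(\mathbf 1))=E$, and Schur applied to distinct orbits makes the matrix $\bigl(\chi_{\mathcal O'}([(X_{\mathcal O},\phi_{\mathcal O})])\bigr)_{\mathcal O,\mathcal O'\in\Omega}$ diagonal with nonzero diagonal entries (the orbit-length scalars arising from tracing $\phi_{\mathcal O}$ around $\mathcal O$). This yields the desired linear independence; as a sanity check, in the special case $\mC=\on{Rep}_E(H)$ of Example~\ref{E: trace of rep} the statement reduces to the Peter-Weyl identification of $E[H\sigma]^H$ with a space whose basis is indexed by $\sigma$-twisted conjugacy classes of irreducible representations of $H$.
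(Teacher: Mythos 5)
Your reduction of endomorphisms of $\widetilde{\mathbf 1}$ to the form $S_{\coev_V,\,\ev_V(\id\otimes\psi)}$ via~\eqref{E:frac} and the subsequent decomposition by $\sigma$-orbits broadly track the paper's sketch of surjectivity. However, you skip over the step the paper singles out, namely that $S_{\coev_Y,\,\ev_Y(\id\otimes\psi)}$ depends only on the semisimplification of $\psi$ when $\psi$ is not already semisimple; appealing to the localization inverting $\gamma_V$ is not the right tool for this, since $\gamma_{\mathbf 1}$ is already the identity and inverting the $\gamma_X$ does not obviously alter $\End(\widetilde{\mathbf 1})$.

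The injectivity argument contains a concrete failure. You set $F_{\calO}(V)=\Hom_{\mC}(V,X_{\calO})$ and propose to use the induced functional $\chi_{\calO}\colon\End_{\Tr_\sigma(\mC)}(\widetilde{\mathbf 1})\to\End_E(F_{\calO}(\mathbf 1))$. But $F_{\calO}(\mathbf 1)=\Hom_{\mC}(\mathbf 1,X_{\calO})$ vanishes for every orbit $\calO$ other than the singleton orbit $\{\mathbf 1\}$, because $X_{\calO}$ is a direct sum of simples all distinct from $\mathbf 1$. Hence $\End_E(F_{\calO}(\mathbf 1))$ is the trivial ring for all but one $\calO$; the identity ``$=E$'' you assert is false, and the matrix $\bigl(\chi_{\calO'}([(X_{\calO},\phi_{\calO})])\bigr)$ is essentially zero rather than diagonal with nonzero entries. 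There is also a variance mismatch: $F_{\calO}$ as written is contravariant, while the trace functors of Proposition~\ref{L:trace cat} are covariant, though this is cosmetic next to the vanishing of $F_{\calO}(\mathbf 1)$. To run this style of argument you would need trace functors with $F(\mathbf 1)\neq 0$ that still separate the orbits, which $\Hom(-,X_{\calO})$ does not provide; the paper itself gives no such mechanism and simply states that injectivity is clear.
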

There is a similar result by Ostrik \cite[Corollary 2.16]{O}, in light of Remark \ref{R: cat center} below.
\begin{proof}
Since the result is not used in the sequel, we only sketch the proof. First, since $E$ is algebraically closed and $\mathbf{1}$ is irreducible, one shows that if $S_{u,v}\neq 0$ for some $u:\mathbf{1}\to V\otimes W$ and $v: W\otimes \sigma V\to \mathbf{1}$, then $S_{u,v}$ is a linear combination of $S_{\coev_{Y}, \ev_Y(1\otimes\psi)}$ with $(Y,\psi)$ indecomposable. 
Next, one shows that indecomposable objects in $\mC^\sigma$ are of the form $(Y,\psi)=(X\otimes W, \phi\otimes f)$, where $(X,\phi)$ is irreducible in $\mC^\sigma$, $W$ is a finite dimensional $E$-vector space and $f$ is an automorphism of $W$. Let $f^{\on{ss}}$ denote the semisimplification of $f$, and $(Y,\psi^{\on{ss}})=(X\otimes W,\phi\otimes f^{\on{ss}})$ the semisimplification of $(Y,\psi)$. 
Then one shows that $S_{\coev_{Y}, \ev_Y(1\otimes\psi)}=S_{\coev_{Y},\ev_Y(1\otimes\psi^{\on{ss}})}$. It follows that $\eqref{E: K to End3}$ induces the desired map which in addition is surjective. The injectivity is clear. 
\end{proof}

\begin{rmk}Recall that the traditional categorification/decategorification is a passage between monoidal categories and algebras via the K-ring construction. The above proposition suggests another passage via the categorical trace construction.
\end{rmk}

\begin{rmk}
\label{R: cat center}
There is also a notion of categorical center (or called Drinfeld center) of a monoidal category (as appearing in \cite{BFO, BN, Lu3}). Given $(\mC,\otimes,\mathbf{1})$, its categorical center $\mZ(\mC)$ is the category whose objects consist of $(X\in \mC, \{\al_{X,Y}: X\otimes Y\simeq Y\otimes X\}_{Y\in \mC})$, satisfying the natural compatibility conditions. In certain cases, there is an equivalence $\mZ(\mC)\to \Tr(\mC)$. However, in our applications, it is more natural to use the notion of categorical trace.
\end{rmk}
\quash{
\begin{rmk}
\label{R: cat tr infty cat}
We briefly discuss the notion of categorical trace in the $\infty$-categorical setting. Note that in higher category theory, we cannot construct $\Tr(\mC)$ in an elementary way as in Proposition \ref{L:trace cat} by specifying objects and morphisms. On the other hand, since
monoidal categories are categorical analogue of algebras, one should define the trace of $\mC$ as the categorical analogue of the Hochschild homology of $\mC$ as in \eqref{E:HH}. It turns out in symmetric monoidal $\infty$-categories, \eqref{E:HH} makes sense for algebra objects, and a (stable) presentable monoidal $\infty$-category can be regarded as an algebra object in the symmetric monoidal $\infty$-category of all (stable) presentable $\infty$-categories (via Lurie's tensor product). In this way, one can define categorical trace of $\mC$ in higher category theory. We refer to \cite{BN} for detailed construction. For example, in Example \ref{E: trace of rep}, if we replace the ordinary category $\on{Rep}_E(H)$ by its (cocompleted) $\infty$-categorical version, i.e. the stable $\infty$-category of quasi-coherent sheaves on $\bfB H$, and calculate its trace in the $\infty$-category of stable presentable $\infty$-categories, then the result would be the $\infty$-category of quasi-coherent sheaves on $\frac{H}{c_\sigma H}$. 
\end{rmk}}

\subsection{The categorical trace of finite Hecke categories}

In this subsection, we work in usual algebraic geometry (as opposed to perfect algebraic geometry). We study the category defined in \S \ref{SS: cat of corr} in the following situation. Let $G$ be an affine algebraic group over $k$ with an endomorphism $\sigma$, and let $K\subset G$ be a closed subgroup such that $\sigma$ restricts to an endomorphism of $K$. We assume that $\sigma$ induces a universal homeomorphism, so it induces an automorphism of the underlying \'etale topos of $G$. For example, if $G$ is defined over a finite field $\bF_q$, we can choose $\sigma$ to be the $q$-Frobenius. Another example is $\sigma=\id$.
We denote $\sigma$-twisted adjoint action as
\[\Ad_\sigma: G\times G\to G,\quad \Ad_\sigma(g_1)(g_2)=\sigma(g_1)g_2 g_1^{-1}.\]
Let
$$X=\frac{G}{\Ad_\sigma K},\quad Y=\frac{G}{\Ad_\sigma G},$$
be the quotient stacks of $G$ by the $\sigma$-twisted adjoint action by $K$ and by $G$ respectively.
Note that $f: X\to Y$ is proper if and only if $K$ is a parabolic subgroup of $G$. 

Let $C=X\times_YX$, and let $\on{D}^{C}(X)$ be the category constructed in \S \ref{SS: cat of corr} from the groupoid $C\rightrightarrows X$.
By Lemma \ref{L: fully faithful}, this is a full subcategory of the category $\on{D}(\frac{G}{\Ad_\sigma G})$ of $\Ad_\sigma G$-equivariant sheaves on $G$, which are familiar objects in representation theory.
So we did not construct anything new in this subsection. However, expressing $\Ad_\sigma G$-equivariant sheaves on $G$ as objects in $\on{D}^{C}(X)$ would help us gain new insights of many well-known constructions in representation theory, and will serve as a toy model for the construction in the next section when we move to the affine setting, where the formalism of $\on{D}^{C}(X)$ becomes essential.

As a warm-up, we first translate some well-known results/constructions using the new formulation.

\begin{ex} 
\label{Ex:springer}
Let $\sigma=\id$ and $K=B$.  By Lemma \ref{L: fully faithful}, this is a full subcategory of the category $\on{D}(\frac{G}{\Ad G})$ of $\Ad G$-equivariant sheaves  on $G$.

One can also consider the closed substack $\frac{B}{\Ad B}\subset \frac{G}{\Ad B}$ and the constant sheaf $\delta_{B}$ supported on it. Note that the base change of $\frac{B}{\Ad B}\to \frac{G}{\Ad G}$ along $G\to \frac{G}{\Ad G}$ is
$\widetilde G:=G\times_{\Ad,B}B\to G$, usually referred as the Grothendieck-Springer alteration. It follows from the classical Springer theory that 
$$\End_{\on{D}^{C}(X)}(\delta_{B})\cong E[W]$$ 
is isomorphic to the group algebra of the Weyl group $W$ of $G$. We will see shortly that this $W$-action on $\delta_{B}$ can be induced from \eqref{E: K to End}, applied to the current setting. More generally, for every element $w\in W$, one can regard the intersection cohomology sheaf on $\frac{\overline{BwB}}{\Ad B}$ as an object in $\on{D}^{C}(X)$. Then its idempotents will correspond to Lusztig's unipotent character sheaves under the embedding $\on{D}^{C}(X)\subset \on{D}(\frac{G}{\Ad G})$.

Let $U\subset B$ be the unipotent radical. One can also consider the closed substack $\frac{U}{\Ad B}\subset \frac{G}{\Ad B}$, and the constant sheaf $\delta_{U}$ supported on it. This corresponds to the Springer sheaf in $\on{D}(\frac{G}{\Ad G})$.
Again, it follows from the classical Springer theory that
$\End_{\on{D}^C(X)}(\delta_{U})\cong E[W]$.  
\end{ex}

\begin{ex}
\label{Ex:DL}
Let $k=\bF_q$, and let $K=B$ be the Borel subgroup.  Let $\sigma$ be the $q$-Frobenius. By Lang's theorem, this is a full subcategory of $\on{D}^C(X)=\on{D}(\on{pt}/G(\bF_q))$.
Consider the closed substack $\frac{B}{\Ad_\sigma B}\subset \frac{G}{\Ad_\sigma B}$. By Lang's theorem, the correspondence
\[\frac{B}{\Ad_\sigma B}\leftarrow\frac{B}{\Ad_\sigma B}\times_{\frac{G}{\Ad_\sigma B}}\frac{B}{\Ad_\sigma B}\rightarrow \frac{B}{\Ad_\sigma B}\]
can be identified with the correspondence between discrete (Deligne-Mumford) stacks
\begin{equation}
\label{E:Hk for unit}
B(\bF_q)\backslash \on{pt}\leftarrow B(\bF_q)\backslash G(\bF_q)/B(\bF_q)\rightarrow \on{pt}/B(\bF_q).
\end{equation}
Here $G(\bF_q)$ and $B(\bF_q)$ are regarded as discrete algebraic groups over $\bF_q$.
It follows that for every representation $\rho: B(\bF_q)\to \GL(V)$, there is a local system $\delta_\rho$ supported on $\frac{B}{\Ad_\sigma B}$ via the usual associated construction, which in turn defines an object in $\on{D}^C(X)$. If $\rho$ is the trivial representation, we also denote the corresponding object $\delta_\rho$ by $\widetilde{\delta_e}$. 
Let $H_W$ denote the $E$-module of $E$-valued functions on $B(\bF_q)\backslash G(\bF_q)/B(\bF_q)$, with the algebra structure given by the convolution (compare with \eqref{E:conv prod alge})
\[(f*g)(x)=\sum_{y\in G(\bF_q)/B(\bF_q)} f(y)g(y^{-1}x).\]
It is usually called the Iwahori-Hecke algebra attached to $W$,  with coefficient in $E$. Essentially by definition (and by Example \ref{Ex:examples of correspondences} (5)), we have
\begin{equation} 
\label{E:endo for unit}
\End(\widetilde{\delta_{e}})=H^{\on{op}}_W.
\end{equation}
Similarly, if $\rho=\on{Reg}$ is the regular representation of $B(\bF_q)$, then 
\begin{equation} 
\label{E:endo for reg}
\End(\delta_{\on{Reg}})=E[G(\bF_q)]^{\on{op}},
\end{equation} 
where $E[G(\bF_q)]$ is the group algebra of $G(\bF_q)$.

More generally, for every $w\in W$, the fibers of the map $\overleftarrow{c}: \overrightarrow{c}^{-1}(\frac{BwB}{\Ad_\sigma B})\to \frac{G}{\Ad_\sigma B}$ are just the usual Deligne-Lusztig variety $\on{DL}_w$ associated to $w$.
In addition, there is the isomorphism
\[  \frac{U}{\Ad_\sigma ((U \cap w^{-1}Uw)T^{w^{-1}\sigma})}\cong \frac{BwB}{\Ad_\sigma B},\quad u\mapsto u\dot{w}.\]
where $T^{w^{-1}\sigma}=\{t\in T\mid \sigma(t)=wtw^{-1}\}$ is a finite torus, and $\dot{w}$ is a lifting of $w$ to the normalizer of $T$ in $G$.
Therefore, for any character $\theta$ of this finite torus, there is a local system $\mL_{w,\theta}$ on  $\frac{BwB}{\Ad_\sigma B}$, which can be regarded as an object in $\on{D}^{C}(X)$ by extension to $\frac{G}{\Ad_\sigma B}$ by zero.
Its image under the full embedding $\on{D}^{C}(X)\subset \on{D}(\on{Rep}(G(\bF_q)))$ is the usual Deligne-Lusztig representation.
\end{ex}

\begin{rmk}
\label{R:springer}
In fact, one can check that in the above two examples, the idempotent completion of $\on{D}^C(X)$ is just $\on{D}(Y)$ if $\cha E=0$. 
\end{rmk}

Now, we explain a general construction of morphisms in $\on{D}^{C}(X)$, following some ideas from \cite[\S 6]{XZ}, which in turn generalizes V. Lafforgue's construction of $S$-operators (\cite[\S 6]{La}). We will heavily make use of the formalism of the cohomological correspondences, which is briefly reviewed in \S \ref{Sec:cohomological correspondence}.

We denote by $X_r$ the quotient of $G^r$ by $K^r$ given by the action 
\begin{equation*}
(k_1,\ldots,k_r)\cdot (g_1,\ldots,g_r)=(\sigma(k_1)g_1k_2^{-1},k_2g_2k_3^{-1},\ldots,k_rg_rk_1^{-1}),\quad k_i\in K, \ g_i\in G.
\end{equation*}
In particular $X_1=X=\frac{G}{\Ad_\sigma K}$. Sometimes, we also write $\frac{G\times^K G}{\Ad_\sigma K}$ for $X_2$. 
The map $m_i:G^r\to G^{r-1}$ of multiplying the $i$th and the $(i+1)$th factors induces 
\begin{equation} 
\label{E: mult}
m_i: X_r\to X_{r-1},
\end{equation}
which we still call the multiplication map.
In addition, the map $(g_1,\ldots, g_r)\mapsto (\sigma(g_r),g_1,\ldots, g_{r-1})$ induces 
\begin{equation}
\label{E: psigma}
\pFr: X_r\to X_r,  
\end{equation}
which we call the partial $\sigma$-map. Note that $(_\mathrm{p}\sigma)^r=\sigma$, which justifies the name.
The following equalities are clear.
\begin{equation*}
\label{E:mult vs pFr}
\pFr \cdot m_i=\left\{ \begin{array}{ll}m_{i-1}\cdot\pFr &  i>1\\ m_{r-1}\cdot (\pFr)^2 & i=1. \end{array}\right.
\end{equation*}

\begin{lem}
\label{L: decom of C}
There is a canonical isomorphism
\[C=X\times_YX\cong X_2\]
such that the left projection $\overleftarrow{c}:X\times_YX\to X$ to the first factor is identified with the partial $\sigma$-map \eqref{E: psigma} composed with the multiplication map \eqref{E: mult} and
that  the right projection $\overrightarrow{c}:X\times_YX\to X$ to the second factor is identified with the multiplication map \eqref{E: mult}.
\end{lem}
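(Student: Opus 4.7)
The plan is to give an explicit groupoid presentation of both sides, exhibit mutually inverse morphisms, and then read off the projections by direct computation.

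First, I unwind the definition of the $2$-fiber product of quotient stacks: an $S$-point of $C = X\times_Y X$ is a triple $(g_1, g_2, h) \in G(S)^3$ satisfying the relation $g_2 = \sigma(h)g_1 h^{-1}$ (the element $h$ gauges the identification of $g_1$ and $g_2$ after extending structure group from $K$ to $G$), taken modulo the $K\times K$-action
\[
(k_1,k_2)\cdot (g_1,g_2,h) \;=\; \bigl(\sigma(k_1)g_1 k_1^{-1},\ \sigma(k_2) g_2 k_2^{-1},\ k_2 h k_1^{-1}\bigr),
\]
with $\overleftarrow{c}(g_1,g_2,h) = g_1$ and $\overrightarrow{c}(g_1,g_2,h) = g_2$. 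Since the relation determines $g_2$ from $(g_1,h)$, this exhibits $C$ as the quotient of $G\times G$ (in coordinates $(g_1,h)$) by an induced $K\times K$-action.

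Next, I define $\Phi : X_2 \to C$ by
\[
\Phi(a_1, a_2) \;=\; \bigl(\sigma(a_2)a_1,\ a_1 a_2,\ a_2^{-1}\bigr).
\]
The defining relation is immediate, since $\sigma(a_2^{-1})\cdot \sigma(a_2)a_1\cdot a_2 = a_1 a_2$. Equivariance is a short bookkeeping check: applying $(k_1,k_2)\cdot(a_1,a_2) = (\sigma(k_1)a_1 k_2^{-1},\ k_2 a_2 k_1^{-1})$ on the source and then $\Phi$ produces, after swapping the labels of the two $K$-factors, the target action described above. The inverse is $\Psi : C\to X_2$, $(g_1, g_2, h) \mapsto (g_2 h,\ h^{-1})$; one checks $\Psi\circ\Phi = \mathrm{id}$ directly, and $\Phi\circ\Psi = \mathrm{id}$ by substituting $g_2 = \sigma(h)g_1 h^{-1}$ to recover $\sigma(h^{-1})g_2 h = g_1$.

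Finally, transporting the projections through $\Phi$: the right projection becomes $(a_1,a_2)\mapsto a_1 a_2$, which is the multiplication map $m : X_2 \to X$ of \eqref{E: mult}; the left projection becomes $(a_1, a_2)\mapsto \sigma(a_2) a_1$. Since $\pFr(a_1,a_2) = (\sigma(a_2), a_1)$ by \eqref{E: psigma} and then $m(\sigma(a_2), a_1) = \sigma(a_2)\cdot a_1$, this is precisely $m\circ \pFr$, as claimed. There is no substantive obstacle here; the only subtlety is tracking the $K\times K$-action through the change of variables, because the two $K$-factors get interchanged by $\Phi$. Once that bookkeeping is done, every remaining verification is a one-line calculation.
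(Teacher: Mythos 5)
Your proof is correct and takes essentially the same route as the paper: the paper writes $X\times_Y X \cong K\backslash G\times^{K,\Ad_\sigma}G$ (two coordinates $(g_1,g_2)$, projections $(g_1,g_2)\mapsto\sigma(g_1)g_2g_1^{-1}$ and $(g_1,g_2)\mapsto g_2$) and applies the change of variables $(g_1,g_2)\mapsto(g_2g_1^{-1},g_1)$, which under the dictionary $g_1\leftrightarrow h^{-1}$ is exactly your $\Psi:(g_1,g_2,h)\mapsto(g_2h,h^{-1})$, with the same swap of the two $K$-factors. The only cosmetic difference is that you present the $2$-fiber product with a redundant third coordinate and a constraint, rather than eliminating it from the start.
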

\begin{proof}
Note that $X\times_YX\cong K\backslash G\times^{K, \Ad_\sigma}G$, with the two projections 
$$\overleftarrow{c},\overrightarrow{c}:  K\backslash G\times^{K, \Ad_\sigma}G\to \frac{G}{\Ad_\sigma K}$$ 
induced by $G\times G\to G, \ (g_1,g_2)\mapsto \sigma(g_1)g_2g_1^{-1}$, and $(g_1,g_2)\mapsto g_2$.
Now the isomorphism $G\times G\cong G\times G,\ (g_1,g_2)\mapsto (g_2g_1^{-1}, g_1)$ induces
\[
 K\backslash G\times^{K, \Ad_\sigma}G \cong  X_2\]
under which, $\overleftarrow{c}$ and $\overrightarrow{c}$ become those maps described in the lemma.
\end{proof}

\begin{construction}
\label{Con: S}
(S-operators \`a la V. Lafforgue.)
Using the above observation, we will construct morphisms between objects in $\on{D}^C(X)$ that come from
\[\on{D}(K\backslash G/K)\to \on{D}(X)\to\on{D}^{C}(X),\]
where the first functor is the pullback along $\frac{G}{\Ad_\sigma K}\to K\backslash G/K$. For an object $\mF\in \on{D}(K\backslash G/K)$, let $\widetilde\mF$ denote its image in $\on{D}(X)$ (and in $\on{D}^{X_2}(X)$). For $\mF_1,\mF_2$, let
$\widetilde\mF_1\tilde\times\widetilde\mF_2$ denote the pullback of $\mF_1\boxtimes\mF_2$ along the projection map 
$$\frac{G\times^KG}{\Ad_\sigma K}\to K\backslash G/K\times K\backslash G/K.$$

Consider the following correspondence
\begin{equation*}
\label{E:Conv corr}
K\backslash G/K\times K\backslash G/K\xleftarrow{\pi} K\backslash G\times^KG/K\xrightarrow{m} K\backslash G/K,
\end{equation*}
where $m$ is induced by multiplication $G\times^K G\to G$ and $\pi$ is the natural projection. Note that $m$ is proper and $\pi$ is smooth (as it is a $K$-torsor).
Similar to the definition of the convolution product for the Satake category (see \eqref{Lus conv}), there is the convolution product on $\on{D}(K\backslash G/K)$:
\begin{equation}
\label{Lus conv2}
\mF_1\star \mF_2=m_!\pi^*(\mF_1\boxtimes\mF_2)=m_*\pi^!(\mF_1\boxtimes\mF_2)\langle-2\dim K\rangle.
\end{equation}
With this convolution product, $\on{D}(K\backslash G/K)$ is a monoidal category, with the unit object being the skyscraper sheaf supported on $K\backslash K/K$, denoted by $\delta_e$.

By adjunction, giving a morphism $u: \mF\to \mG_1\star \mG_2$ is equivalent to giving a cohomological correspondence
\begin{equation*}
\label{E: creation corr}
m^*\mF\to\pi^!(\mG_1\boxtimes\mG_2)\langle-2\dim K\rangle,
\end{equation*}
which, by abuse of notation, is still denoted by $u$. Dually, the datum of a morphism $v:\mG_1\star\mG_2\to \mF$ is the same as the datum of a cohomological correspondence
\begin{equation*}
\label{E: ann corr}
\pi^*(\mG_1\boxtimes\mG_2)\to m^!\mF,
\end{equation*}
still denoted by $v$. In addition, consider the morphism
\[\on{sw}(\id\times \sigma): K\backslash G/K\times K\backslash G/K\to K\backslash G/K\times K\backslash G/K,\]
where $\on{sw}: (K\backslash G/K)^2\to (K\backslash G/K)^2$ swaps the two factors. Then we have the pullback cohomological correspondence (see Example \ref{Ex:examples of correspondences} (2))
\begin{equation} 
\label{E: Frob corr}
\Gamma_{\on{sw}(\id\times\sigma)}^*: (K\backslash G/K\times K\backslash G/K, \mF_1\boxtimes \mF_2)\to (K\backslash G/K\times K\backslash G/K, \mF_2\boxtimes \sigma^*\mF_1).
\end{equation}

Now, given $u: \mF\to \mG_1\star \mG_2$, by applying the formalism of pullback of cohomological correspondences \eqref{ASS:smooth pullback correspondence}-\eqref{ASS:smooth pullback correspondence3} to the following commutative diagram (where the left square is Cartesian)
\[\xymatrix{
\frac{G}{\Ad_\sigma K}\ar[d]&\ar_-m[l]\frac{G\times^KG}{\Ad_\sigma K}\ar@{=}[r]\ar[d]&\frac{G\times^KG}{\Ad_\sigma K}\ar[d]\\
K\backslash G/K&\ar_-m[l] K\backslash G\times^KG/K\ar^-\pi[r]& K\backslash G/K\times K\backslash G/K,
}\]
we obtain a cohomological corresponding $\widetilde u: m^*\widetilde{\mF}\to \widetilde{\mG_1}\tilde\times\widetilde{\mG_2}$. Dually, a morphism $v: \mG_1\star\mG_2\to \mF$ induces $\widetilde v: \widetilde\mG_1\tilde\times\widetilde\mG_2\to m^!\widetilde\mF$. 
In addition, from the following Cartesian diagram
\[
\xymatrix{
K\backslash G/K\times K\backslash G/K\ar^-{\on{sw}(\id\times\sigma)}[rr]\ar[d]&& K\backslash G/K\times K\backslash G/K\ar[d]\\
\frac{G\times^KG}{\Ad_\sigma K}\ar^{\pFr}[rr]&& \frac{G\times^KG}{\Ad_\sigma K},
}\]
we see that the pullback of \eqref{E: Frob corr} gives $\Ga_{\pFr}^*: (\frac{G\times^KG}{\Ad_\sigma K}, \widetilde{\mF_1}\tilde\times\widetilde{\mF_2})\to (\frac{G\times^KG}{\Ad_\sigma K}, \widetilde{\mF_2}\tilde\times \sigma^*\widetilde{\mF_1})$.

Now, suppose we have
\[u: \mF_1\to \mG_1\star \mG_2,\quad v: \mG_2\star \sigma^*\mG_1\to \mF_2\]
we obtain a cohomological correspondence by composition
\[S_{u,v}:=\widetilde v \circ  \Ga_{{_p}\sigma}^* \circ \widetilde u: ( \frac{G}{\Ad_\sigma K},\widetilde{\mF_1})\to (\frac{G\times^KG}{\Ad_\sigma K},\widetilde{\mG_1}\tilde\times\widetilde{\mG_2})\to (\frac{G\times^KG}{\Ad_\sigma K}, \widetilde{\mG_2}\tilde\times\widetilde{\sigma^*\mG_1})\to (\frac{G}{\Ad_\sigma K},\widetilde{\mF_2}).\]
It follows from Lemma \ref{L: decom of C} that $S_{u,v}\in \Hom_{\on{D}^{C}(X)}(\widetilde{\mF_1},\widetilde{\mF_2})$. These operators are called $S$-operators.
\end{construction}

Here is a direct consequence of the above construction. 

\begin{cor}
\label{C:trace str on tilde}
The functor $\on{D}(K\backslash G/K)\to \on{D}(X)\to \on{D}^{C}(X)$ has a natural $\sigma$-twisted trace functor structure. Therefore, the composition $\on{D}(K\backslash G/K)\to \on{D}(\frac{G}{\Ad_\sigma K})\to \on{D}(\frac{G}{\Ad_\sigma G})$ (known as the horocycle transform) admits a natural $\sigma$-twisted trace functor structure.
\end{cor}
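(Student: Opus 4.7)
The plan is to construct the $\sigma$-twisted trace functor structure directly from the $S$-operators of Construction~\ref{Con: S} and the partial Frobenius $\pFr\colon X_2\to X_2$ of~\eqref{E: psigma}. Concretely, for $\mathcal{F}_1,\mathcal{F}_2\in \on{D}(K\backslash G/K)$ I would define the required natural transformation
\[
\alpha_{\mathcal{F}_1,\mathcal{F}_2}\colon \widetilde{\mathcal{F}_1\star\mathcal{F}_2}\longrightarrow \widetilde{\mathcal{F}_2\star \sigma^{*}\mathcal{F}_1}
\]
to be $S_{\id,\id}$, obtained by taking $u=\id\colon \mathcal{F}_1\star\mathcal{F}_2\to \mathcal{F}_1\star\mathcal{F}_2$ (with $\mathcal{G}_1=\mathcal{F}_1,\mathcal{G}_2=\mathcal{F}_2$) and $v=\id\colon \mathcal{F}_2\star \sigma^{*}\mathcal{F}_1\to \mathcal{F}_2\star \sigma^{*}\mathcal{F}_1$. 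Unpacking the recipe, and using base change along the Cartesian square appearing in Construction~\ref{Con: S} to identify $\widetilde{\mathcal{F}_1\star\mathcal{F}_2}\cong m_{!}(\widetilde{\mathcal{F}_1}\tilde\times\widetilde{\mathcal{F}_2})$, this morphism is represented by the pullback cohomological correspondence $\Gamma_{\pFr}^{*}$ applied to $\widetilde{\mathcal{F}_1}\tilde\times\widetilde{\mathcal{F}_2}$ on $X_2$.

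Next I would verify that $\alpha_{\mathcal{F}_1,\mathcal{F}_2}$ is an isomorphism in $\on{D}^{C}(X)$. By hypothesis $\sigma$ induces a universal homeomorphism on $G$, hence $\pFr\colon X_2\to X_2$ is itself a universal homeomorphism and $\pFr^{*}$ is an equivalence of the relevant derived categories. Therefore $\Gamma_{\pFr}^{*}$ is invertible and $\alpha_{\mathcal{F}_1,\mathcal{F}_2}$ is an isomorphism. Naturality in each argument is clear from the functoriality of pullback cohomological correspondences in the underlying sheaves.

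The main step is to check the hexagon axiom~\eqref{E:cond trace} for a triple $\mathcal{F}_1,\mathcal{F}_2,\mathcal{F}_3$. I would translate the axiom into a statement on the three-fold space $X_3$ together with its partial Frobenius $\pFr\colon X_3\to X_3$ and the multiplication maps $m_i\colon X_3\to X_2$. Both composites in the hexagon target $\widetilde{\mathcal{F}_3\star \sigma^{*}\mathcal{F}_1\star \sigma^{*}\mathcal{F}_2}$ and, after tracing through the $S$-operator recipe, each becomes the cohomological correspondence represented by $\Gamma_{(\pFr)^{2}}^{*}$ acting on the external triple product over $X_3$, followed by the total multiplication $X_3\to X$. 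The coincidence of the two composites is the precise content of the identities $\pFr\cdot m_i=m_{i-1}\cdot \pFr$ for $i>1$ and $\pFr\cdot m_1=m_{r-1}\cdot(\pFr)^{2}$ recorded in the paper: they say that the two factorizations of $(\pFr)^{2}$ through successive $m_i$'s produce the same global cohomological correspondence. This verification, amounting to a diagram chase that combines proper base change, pullback functoriality of cohomological correspondences (as summarized in Construction~\ref{Con: S}), and the associativity of $\star$, is the main obstacle—conceptually routine but bookkeeping-intensive.

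The second assertion, concerning the horocycle transform, is then formal: by Lemma~\ref{L: fully faithful} the canonical functor $\on{D}^{C}(X)\hookrightarrow \on{D}(\tfrac{G}{\Ad_\sigma G})$ is a fully faithful embedding, so post-composing the trace functor $\on{D}(K\backslash G/K)\to \on{D}^{C}(X)$ with it automatically inherits the $\sigma$-twisted trace functor structure.
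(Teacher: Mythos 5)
Your proposal is correct and takes essentially the same approach as the paper, whose one-line proof constructs $\alpha_{\mathcal{F}_1,\mathcal{F}_2}=S_{\id,\id}$ via Construction~\ref{Con: S} exactly as you do and leaves the remaining checks implicit. Your additional details—invertibility from the universal-homeomorphism property of $\pFr$, the hexagon axiom via the identities $\pFr\cdot m_i=m_{i-1}\cdot\pFr$ and $\pFr\cdot m_1=m_{r-1}\cdot(\pFr)^2$, and the horocycle statement by post-composing with the functor of Lemma~\ref{L: fully faithful}—merely supply detail the paper omits.
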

The second statement already appeared \cite{BFO,BN,Lu3}. 
\begin{proof}
There is a canonical isomorphism $\widetilde{\mF_1\star\mF_2}\cong\widetilde{\mF_2\star \sigma^*\mF_1}$ induced by
\[u=\id: \mF_1\star\mF_2=\mF_1\star\mF_2,\quad v=\id: \mF_2\star \sigma^*\mF_1=\mF_2\star \sigma^*\mF_1.\]
\end{proof}

If $\mF\in \on{D}_c^b(K\backslash G/K)$, then $\mF$ is dualizable with respect to the monoidal structure on $\on{D}(K\backslash G/K)$ given by the convolution product \eqref{Lus conv2}. Indeed, let
\[\mF^*=\iota^*\bD(\mF)\langle2\dim K\rangle,\]
where $\bD$ is the Verdier dual on $\on{D}_c^b(K\backslash G/K)$ and 
$$\iota: K\backslash G/K\to K\backslash G/K,\quad  g\mapsto g^{-1}.$$ 
Then $\mF^*$ is both the left and the right dual of $\mF$.
Namely, using the projection $\on{pt}\to \mathbf BK=K\backslash K/K$, we may regard the cohomological correspondences in Example \ref{Ex:examples of correspondences} (4) as cohomological correspondences
\[\coev'_{\mF}\in \on{Corr}_{K\backslash G/K}((K\backslash K/K, E), (K\backslash G/K\times K\backslash G/K, \mF\boxtimes \bD \mF)).\]
\[\ev'_{\mF}\in\on{Corr}_{K\backslash G/K}((K\backslash G/K\times K\backslash G/K, \bD\mF\langle2\dim K\rangle\boxtimes \mF), (K\backslash K/K,E)) \]
There is the following commutative diagram
\[
\xymatrix{
K\backslash K/K\ar[d]&\ar[l] K\backslash G/K\ar^-\Delta[r]\ar^{\Delta'}[d]& K\backslash G/K\times K\backslash G/K\ar^{\id\times\iota}[d]\\
K\backslash G/K&\ar[l] K\backslash G\times^KG/K\ar[r]& K\backslash G/K\times K\backslash G/K,
}\]
where $\Delta'$ is the map induced by $G\to G\times^KG,\ g\mapsto (g,g^{-1})$ so that the left commutative square is Cartesian. Note that the right commutative is also base changeable in the sense of Definition \ref{D: basechangeable}.
Then the pushforward of $\coev'_{\mF}$ and $\ev'_\mF$ along $\Delta'$ in the sense of \eqref{ASS:pushforward correspondence} gives
\begin{equation*}
\label{E:unandcoun}
\coev_{\mF}: \delta_e\to \mF\star \mF^*,\quad \ev_{\mF}:\mF^*\star\mF\to \delta_e
\end{equation*}
making $\mF^*$ the left dual of $\mF$. Similarly, there are morphisms $\coev'_{\mF}:\delta_e\to \mF^*\star \mF$ and $\ev'_{\mF}:\mF\star\mF'\to \delta_e$ making $\mF^*$ the right dual of $\mF$. 
From the construction, there is the particular element 
\begin{equation}
\label{E: SLa}
S_{\mF}:=S_{\coev_\mF,\ev_\mF}\in \End_{\on{D}^{X_2}(X)}(\widetilde{\delta_e}).
\end{equation}
This is the original $S$-operator constructed by V. Lafforgue.

It follows from Proposition \ref{L:trace cat} and Lemma \ref{C:trace str on tilde} that the functor $\on{D}_c^b(K\backslash G/K)\to \on{D}^{C}(X)$ factors through
\[
\Tr_\sigma(\on{D}_c^b(K\backslash G/K))\to \on{D}^{C}(X).
\]
The element $S_{u,v}$ defined in the proof of Proposition \ref{L:trace cat} goes exactly to the $S$-operator constructed in Construction \ref{Con: S}. In particular, the element \eqref{E:abs S} goes to \eqref{E: SLa}.

\begin{ex}
Now we specify the above general discussions to the particular cases.
We consider the case $G$ is reductive, $K=B$. Recall that $\on{D}_c^b(B\backslash G/B)$ is usually referred as the finite Hecke category.
If $E=\Ql$ and $k$ is a finite field,  for $w\in W$, let $\IC_w$ denote the intermediate extension of local system $\Ql[\ell(w)](\ell(w)/2)$ on $B\backslash BwB/B$ to its closure, where $\ell(w)=\dim BwB/B$ is the usual length function on $W$. (Here we fix a half Tate twist.)
Then the subcategory of $\on{D}_c^b(B\backslash G/B)$ formed by $\{\IC_w[d](\frac{d}{2}),\ w\in W, d\in \bZ\}$ form a semisimple monoidal subcategory. Its Grothendieck ring is isomorphic to
the finite Hecke algebra $\mathbb H_W$ over $\bZ[v,v^{-1}]$ (with the inclusion $\bZ[v,v^{-1}]\subset \mathbb H_W$ induced by the inclusion of $\{\on{IC}_0(i), \ i\in\bZ\}$ into the Hecke category). When $v=1$, $\mathbb H_W$ specializes to the group algebra of $W$, and when $v=\sqrt{q}$, it specialize to the Hecke algebra $H_W$ mentioned before.
It follows from Lemma \ref{L: K iso End} that
\begin{itemize}
\item In the case as in Example \ref{Ex:springer}, there is a natural action of $\bH^{\on{op}}_W$ on $\delta_B$. But it is clear that $S_{\IC_w(i)}=S_{\IC_w}$. Therefore, the action of $\bH^{\on{op}}_W$ factors through the action of $W$ on $\delta_B$. In fact, our construction is nothing but a reformation of Ginzburg's construction of the Springer action by correspondences.
\item In the case as in Example \ref{Ex:DL}, the map \eqref{E: K to End3} induces a canonical action of $H_W^{\on{op}}$ on $\widetilde{\delta_e}$. On the other hand,
$\End\widetilde{\delta_e}=H_W^{\on{op}}$ by \eqref{E:Hk for unit} . It follows that we obtain a morphism $H^{\on{op}}_W\to H^{\on{op}}_W$. Proposition \ref{P:S=T finite} below implies that this map is the identity.
\end{itemize}
\end{ex}
\begin{rmk}
The map \eqref{E: K to End2} (and its $\sigma$-twisted analogue) should allow one to deduce information of endomorphism rings of more general objects in $\on{D}^C(X)$ (e.g. the Deligne-Lusztig representations). 
\end{rmk}

Let $\mF\in \on{D}_c^b(B\backslash G/B)$. Assume that it is equipped with a canonical isomorphism $\sigma^*\mF\cong \mF$ (a Weil structure). Let $f_\mF$ denote the function on $B(\bF_q)\backslash G(\bF_q)/B(\bF_q)$ via Grothendieck's sheaf-to-function dictionary. It follows from \eqref{E:Hk for unit}  that this function gives an element in $\End(\widetilde{\delta_e})$, denoted by $S'_\mF$.
\begin{prop}
\label{P:S=T finite}
Under the map $\End_{\Tr_{\sigma}(\on{D}^b_c(B\backslash G/B))}(\widetilde{\mathbf{1}})\to \End (\widetilde{\delta}_e)$, the element $S_\mF$ maps to $S'_{\mF}$.
\end{prop}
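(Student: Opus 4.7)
The plan is to translate both sides into functions on $B(\bF_q)\backslash G(\bF_q)/B(\bF_q)$ via Grothendieck's sheaf-to-function dictionary and then compare them. By Example~\ref{Ex:DL} (see \eqref{E:Hk for unit}--\eqref{E:endo for unit}), Lang's theorem identifies $\End(\widetilde{\delta_e})$ with $H_W^{\on{op}}$ by sending a self-correspondence of $\widetilde{\delta_e}$ in $\on{D}^{C}(X)$ to its trace-of-Frobenius function. Since $S'_\mF$ is by definition the element corresponding to $f_\mF$ under this identification, the claim reduces to computing the function attached to the cohomological correspondence $S_\mF$ and checking that it equals $f_\mF$.

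First I would unpack $S_\mF = S_{\coev_\mF, \ev_\mF}$ from Construction~\ref{Con: S} as the composition $\widetilde{\ev_\mF} \circ \Gamma_{\pFr}^* \circ \widetilde{\coev_\mF}$ of cohomological correspondences over $X\times_Y X \cong X_2$ (using Lemma~\ref{L: decom of C}); the Weil structure $\sigma^*\mF \cong \mF$ enters precisely to identify $\mF^* \star \sigma^*\mF$ with $\mF^* \star \mF$, so that $\ev_\mF$ fits the $v$-slot of $S_{u,v}$. Next I would restrict to the substack $B/\Ad_\sigma B$ supporting $\widetilde{\delta_e}$, which by Lang's theorem is $\on{pt}/B(\bF_q)$, and apply proper base change along the chain of Cartesian squares defining the composition. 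The partial Frobenius $\pFr$ is what imposes a Frobenius-fixed-point condition in the middle square; after base change, the fiber at a double coset $g \in B(\bF_q)\backslash G(\bF_q)/B(\bF_q)$ collapses to a single $\bF_q$-point whose coefficient is the stalk $\mF_g$ together with its Weil-structure Frobenius endomorphism. Applying the Grothendieck-Lefschetz trace formula then yields the function value $\tr(\Frob \mid \mF_g) = f_\mF(g)$, as desired.

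The main obstacle will be the careful bookkeeping of Tate twists and cohomological shifts, since $\mF^* = \iota^*\bD(\mF)\langle 2\dim B\rangle$ and the convolution product \eqref{Lus conv2} both introduce shifts that must cancel correctly against the smooth-pullback twist built into the cohomological-correspondence formalism used in Construction~\ref{Con: S}. As a sanity check and an alternative route, both $\mF \mapsto S_\mF$ and $\mF \mapsto f_\mF$ are additive under direct sums (the first via $S_{V \oplus V'} = S_V + S_{V'}$ as noted before \eqref{E: K to End}), so the identity can be reduced to the simple generators $\IC_w$, whose stalks and Frobenius traces are explicitly accessible from the Deligne-Lusztig geometry in Example~\ref{Ex:DL}.
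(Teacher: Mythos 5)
Your overall plan is the right one, and in broad strokes it mirrors the paper's argument: identify $\End(\widetilde{\delta_e})$ with $H_W^{\on{op}}$ via Lang's theorem, and use a Lefschetz-type trace formula to pin down the function attached to $S_\mF$. The paper indeed does this, invoking the Braverman--Varshavsky trace formula to identify the ``point-level'' correspondence $S'_\mF := \on{ev}_\mF \circ \Gamma_{\on{sw}(\id\times\sigma)}^* \circ \on{coev}_\mF$ on $(G/B)(\bF_q)$ with $f_\mF$. However, the real content of the proposition is in comparing $S_\mF$ (a cohomological correspondence supported on the quotient stacks $\frac{B}{\Ad_\sigma B} \leftarrow \mZ \to \frac{G\times^B G}{\Ad_\sigma B}\cdots$) to $S'_\mF$ (supported on the non-equivariant schemes $\on{pt} \leftarrow G/B \to G/B \times G/B \cdots$), and this is where your argument has a genuine gap.

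The sentence ``after base change, the fiber at a double coset $g$ collapses to a single $\bF_q$-point whose coefficient is the stalk $\mF_g$'' asserts exactly what needs to be proved, and it is not true at the level you are working on. The support of the composite correspondence computing $S_\mF$ over a double coset is positive-dimensional -- roughly a $B$-bundle over a $B(\bF_q)$-torsor -- and the contribution of that extra geometry has to cancel against the smoothness shift built into the pullback-of-correspondence formalism. What makes this cancellation rigorous in the paper is the introduction of an auxiliary smooth common cover $\mW \cong \{g\in G : \sigma(g)^{-1}g\in B\}$ sitting as a middle row between the stack-level correspondence (supporting $S_\mF$) and the scheme-level correspondence (supporting $S'_\mF$), together with the key technical input that smooth pullback of cohomological correspondences commutes with composition (Lemma~\ref{AL:pullback compatible with composition}, applicable because the relevant squares (A)--(D) in diagram~\eqref{E: cal coh corr to local} are Cartesian). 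This lets one verify the identity factor by factor on $\coev_\mF$, $\Gamma_{\pFr}^*$ and $\ev_\mF$, which is where the actual work lies. Your plan of ``applying proper base change along the chain of Cartesian squares'' and your suggested reduction to the $\IC_w$ generators would still need to reproduce this middle-row comparison (or something equivalent); without it, the jump to ``the fiber collapses'' and thus to the trace formula is not justified.
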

\begin{proof}
We give a detailed proof of this proposition. 
A similar, but more complicated argument was used in \cite[\S 6.3]{XZ} to establish its affine analog (which will be stated as part of Theorem \ref{T: trace of geom Sat} below).

Let us denote $\mZ=m^{-1}(\frac{B}{\Ad_\sigma B})\subset \frac{G\times^BG}{\Ad_\sigma B}$. We consider the following diagram
\begin{small}
\begin{equation}\label{E: cal coh corr to local}
\xymatrix@C=15pt{
\on{pt}&\ar[l] G/B\ar^-{\Delta}[r]& G/B\times G/B\ar@{}[drr]|{\mathrm{(A)}}&& G/B\times G/B\ar_-{\on{sw}(\id\times \sigma)}[ll]\ar@{}[dr]|{\mathrm{(B)}}&\ar_-\Delta[l] G/B\ar[r]& \on{pt}\\
\on{pt}\ar[d]\ar@{=}[u]&\ar[l] \mZ_1\ar[r]\ar[d]\ar[u]& G\times G\ar[d]\ar[u]\ar@{}[drr]|{\mathrm{(C)}}&&\ar_-{\on{sw}(\id\times \sigma)}[ll]G\times G\ar[d]\ar[u]\ar@{}[dr]|{\mathrm{(D)}}&\ar[l]\mZ_2\ar[r]\ar[d]\ar[u]&\on{pt}\ar[d]\ar@{=}[u]\\
\frac{B}{\Ad_\sigma B} & \mZ\ar[l]\ar[r]& \frac{G\times^BG}{\Ad_\sigma B}&&\ar_{\pFr}[ll]\frac{G\times^BG}{\Ad_\sigma B}&\ar[l] \mZ\ar[r]& \frac{B}{\Ad_\sigma B},
}
\end{equation}
\end{small}
where 
\begin{itemize}
\item the two vertical maps in Square $(C)$ are natural projections, and the map $G\times G\to G\times G$ is given by $(g_1, g_2)\mapsto (\sigma(g_2), g_1)$ so that $(C)$ is Cartesian;
\item the variety $\mZ_2$ is defined so that the diagram $(D)$ is Cartesian, i.e. $\mZ_2=(g_1,g_2)\in G\times G,\ g_1g_2\in B$;
\item the left vertical map $G\times G\to G/B\times G/B$ in diagram $(A)$ is is given by $(g_1,g_2)\mapsto (g_1B, g_2^{-1}B)$, and the right vertical map $G\times G\to G/B\times G/B$ in diagram $(A)$ is given by $(g_1,g_2)\mapsto (g_1^{-1}B,g_2B)$,  so that $(A)$ and $(B)$ are Cartesian;
\item the variety $\mZ_1=G$, with the map $\mZ_1\to G\times G$ given by $g\mapsto (g,g^{-1})$ so that the composition $\mZ_1\to G\times G\to \frac{G\times^BG}{\Ad_\sigma B}$ factors through $\mZ$ and the resulting morphism $\mZ_1 \to \mZ$ is smooth, and with the map $\mZ_1\to G/B$ being the natural projection (and therefore is also smooth).
\end{itemize}
Taking the fiber products of each row, we obtain the commutative diagram
\begin{equation*}
\xymatrix{
\on{pt}&\ar[l] (G/B)(\bF_q)\ar[r] & \on{pt}\\
\on{pt}\ar@{=}[u]\ar[d]&\ar[l] \mW \ar[u]\ar[d]\ar[r] & \on{pt}\ar@{=}[u]\ar[d]\\
B(\bF_q)\backslash \on{pt}&\ar[l] B(\bF_q)\backslash G(\bF_q)/B(\bF_q)\ar[r]& \on{pt}/B(\bF_q),
}\end{equation*}
where $\mW:=\mZ_1\times_{G\times G, \on{sw}(\id\times \sigma)}\mZ_2\cong\{g\in G\mid \sigma(g)^{-1}g\in B\}$, which maps to $\mZ_1=G$ by sending $g$ to $\sigma(g)$ and maps to $\mZ_2$ by sending $g$ to $(\sigma(g)^{-1},g)$. The map $\mW\to (G/B)(\bF_q)$ sends $g$ to $gB$, so that the fibers of this map are isomorphic to $B$. The map $\mW\to B(\bF_q)\backslash G(\bF_q)/B(\bF_q)$ sends $g$ to its image in $B(\bF_q)\backslash (G/B)$ (which automatically lands in $B(\bF_q)\backslash G(\bF_q)/B(\bF_q)$). 

By definition, $S_{\mF}: (B(\bF_q)\backslash \on{pt},\widetilde{\delta_e})\to (\on{pt}/B(\bF_q),\widetilde{\delta_e})$ is a cohomological correspondence supported on the bottom row of the above diagram. On the other hand, there is the cohomological correspondence 
$$S'_\mF:=\on{ev}_{\mF}\circ \Ga_{\on{sw}(\id\times\sigma)}^* \circ\on{coev}_{\mF}: (\on{pt},E)\to(\on{pt},E)$$ supported on the top row, which by a trace formula of Braverman-Varshavsky (see \cite[Lemma A.2.22]{XZ}), is given by the function $f_\mF$ on $B(\bF_q)\backslash G(\bF_q)/B(\bF_q)$.

Since both maps $\mW\to (G/B)(\bF_q)$ and $\mW\to B(\bF_q)\backslash G(\bF_q)/B(\bF_q)$ are equidimensionally smooth (of dimension $\dim B$), we can pullback $S_\mF$ and $S'_\mF$ to the middle row using the formalism explained in \eqref{ASS:smooth pullback correspondence}-\eqref{ASS:smooth pullback correspondence3}. It is enough to show that their pullbacks are equal.

Note that both $S_{\mF}$ and $S'_{\mF}$ are composition of three cohomological correspondences.
Since commutative squares (A)-(D) are all Cartesian, Lemma \ref{AL:pullback compatible with composition} implies that smooth pullbacks commute with compositions of cohomological correspondences. Therefore, it remains to observe that in Diagram \eqref{E: cal coh corr to local}
\begin{itemize}
\item
the pullback of the cohomological correspondence $\widetilde{\coev_\mF}: (\frac{B}{\Ad_\sigma B}, \widetilde{\delta_e})\to (\frac{G\times^BG}{\Ad_\sigma B}, \widetilde{\mF}\tilde\times \widetilde{\mF^*})$ supported on $\mZ$ to the middle row  is equal to the pullback of $\on{un}_{\mF}: (\on{pt},E)\to (G/B\times G/B, \mF\boxtimes \bD\mF)$ supported on $G/B$, and similarly the dual statement holds for $\ev_\mF$ and $\on{coun}_{\mF}$; and
\item the pullback of $\Ga_{\pFr}^*: (\frac{G\times^BG}{\Ad_\sigma B}, \widetilde{\mF}\tilde\times\widetilde{\mF^*})\to (\frac{G\times^BG}{\Ad_\sigma B}, \widetilde{\mF^*}\tilde\times\widetilde{\mF})$ to the middle row is equal to the pullback of $\Ga_{\on{sw}(\id\times\sigma)}^*: (G/B\times G/B, \mF\boxtimes \bD \mF)\to (G/B\times G/B, \bD \mF\boxtimes \mF)$.
\end{itemize}
\end{proof}

\section{The categorical trace of the geometric Satake}
\label{S: cattrSat}

In this section, we move to the affine setting. While the basic idea remains the same, the technical details are more involved.

Let $F$ denote a local field with finite residue field $k=\bF_q$. The $q$-Frobenius is denoted by $\sigma$. 
Recall the notations from \S \ref{S: affGrass}, in particular \eqref{ramified Witt vector} and \eqref{discs}. We assume that $G$ is a connected reductive group over $\mO$. For simplicity, we write $K=L^+G$.

It is clear from the discussion in the previous section that to study the representation theory of $p$-adic groups, it is important to study the category of $\Ad LG$ or $\Ad_\sigma LG$-equivariant sheaves on $LG$. However, one of the difficulties in the affine setting is that the quotient $Y=\frac{LG}{\Ad LG}$ or $Y=\frac{LG}{\Ad_\sigma LG}$ is far from being algebraic so it is not clear how to make sense of the category of sheaves on it. Our approach (which should be equivalent to the approach outlined in \cite{Ga}) is to replace $\on{D}(Y)$ by the category $\on{D}^\Hk(X)$ introduced in \S \ref{SS: cat of corr}, where $X=\frac{LG}{\Ad K}$ or $X=\frac{LG}{\Ad_\sigma  K}$ and $\Hk=\Hk(X):=X\times_YX$\footnote{More precisely, one needs to define $\on{D}(Y)$ as the homotopy category of $\Shv^{\Hk}(X)$ as outlined in Remark \ref{R: DC enh} (2). But we do not need the more sophisticated version in this article.}.  As we shall see, although neither $X$ or $\Hk$ are algebraic stacks, they can be approximated by finite dimensional algebraic stacks so $\on{D}^{\Hk}(X)$ makes sense. Another advantage to replace $\on{D}(Y)$ by $\on{D}^\Hk(X)$ is that the latter is realized via cohomological correspondences and is related to global moduli spaces (Shimura varieties or moduli of global Shtukas). Let us also mention that in Fargues-Scholze's approach to the local Langlands correspondence for $p$-adic groups, they replace the sought-after category $\on{D}(\frac{LG}{\Ad_\sigma LG})$ by the category of sheaves on the moduli of $G$-bundles on the Fargues-Fontaine curve. It should relate to our category $\on{D}^\Hk(X)$ via the nearby cycle functors. However, it is not clear to the author how to access $\on{D}(\frac{LG}{\Ad LG})$ in Fargues-Scholze's approach.

Once the appropriate category is defined, we can take the categorical trace of the geometric Satake correspondence, which will be the key ingredient of our arithmetic applications in the next section. In this section, we will only discuss the case $Y=\frac{LG}{\Ad_\sigma LG}$. However, almost all constructions also apply to the case $Y=\frac{LG}{\Ad LG}$. See Remark \ref{R:affine springer} .

\subsection{Moduli of local Shtukas}

Since
$X=\frac{LG}{\Ad_\sigma K}$, $Y=\frac{LG}{\Ad_\sigma LG}$ and $X\times_YX$ are not algebraic stacks, to extend to previous discussions to the affine setting the first step is to make sense the category defined in \S \ref{SS: cat of corr} in this setting. It turns out 
the $\frac{LG}{\Ad_\sigma K}$ in this case has an interpretation as the moduli of local Shtukas.

Let $R$ be a perfect $k$-algebra.  If $\mE$ is a $G$-torsor on $D_R$, its pullback along the $q$-Frobenius $\sigma:D_R\to D_R$ is denoted by ${^\sigma}\mE$.
A local $r$-iterated $G$-Shtuka over $\Spec R$ is a sequence of $G$-torsors $\mE_1,\ldots, \mE_r$ on $D_{R}$, together with a chain of modifications
\[\mE_r\dashrightarrow \mE_{r-1}\dashrightarrow\cdots\dashrightarrow \mE_0:={^\sigma}\mE_r.\]
We say its singularities are bounded by $\mmu=(\mu_1,\ldots,\mu_r)$ if
\[\inv(\mE_i\dashrightarrow \mE_{i-1})\leq \mu_i.\]
We define the moduli of local $r$-iterated $G$-Shtukas $\on{Sht}^{r,\loc}$ as the prestack that assigns every $R$ the groupoid of local $r$-iterated $G$-Shtukas on $\Spec R$. It is the union (over $\mmu$) of closed sub-prestacks $\on{Sht}^{\loc}_{\mmu}$ consisting of those local Shtukas with singularities bounded by $\mmu=(\mu_1,\ldots,\mu_r)$.  In particular, if $r=1$, $\on{Sht}^{r,\loc}$ is denoted by $\Sht^\loc$, called the moduli of local $G$-Shtukas, which is the union of closed sub-prestacks $\Sht^\loc_\mu$.

Since every $G$-torsor on $W(R)$ can be trivialized \'etale locally on $R$ (e.g. see \cite[Lemma 1.3]{Zh1}), there is a natural isomorphism
\begin{equation}
\label{R:pre of Sht}
\Sht^\loc\cong \frac{LG}{\Ad_\sigma K},\quad \mbox{which restricts to the isomorphism } \Sht^\loc_\mu\cong \frac{\Gr_\mu^{(\infty)}}{\Ad_\sigma K},\ \ \ \mu\in \xcoch(T)/W
\end{equation}
Namely, let $\Sht^{\loc,\Box}$ denote the $K$-torsor over $\Sht^\loc$ classifying local $G$-Shtukas $(\beta:\mE\dashrightarrow {^\sigma}\mE)$ together with a trivialization $\epsilon: \mE\simeq \mE^0$. Then the composition $\mE^0\xrightarrow{\epsilon^{-1}}\mE\stackrel{\beta}{\dashrightarrow}{^\sigma}\mE\xrightarrow{\sigma(\epsilon)}\mE^0$ defines an element $g\in LG$ and induces the isomorphism $\Sht^{\loc,\Box}\cong LG$, under which the $K$-action on $\Sht^{\loc,\Box}$ is identified with the action $\Ad_\sigma$ on $LG$. In addition, if $\inv(\beta)\leq \mu$, then $g\in \Gr_{\mu}^{(\infty)}$. Taking the quotient gives the above isomorphism.

\begin{ex}\label{no singularity}
If $\mmu=\mu=0$, $\Sht^{\loc}_{0}=\mathbf B G(\mO)$ is the the classifying stack of the profinite group $G(\mO)$. 
\end{ex}

\begin{ex}\label{loc Sht and pdiv}
If $\mO=\bZ_p$, the category of $\GL_n$-Shtukas over $R$ with singularities bounded by the coweight $\omega_i$ (as defined in Example \ref{minu Sch}) is equivalent to the category of $p$-divisible groups of dimension $i$ and height $n$ over $R$. The functor sends a $p$-divisible group $\bX$ to its Dieudonn\'e module, and a theorem of Gabber's asserts that this is an equivalence. Note that  $\GL_n$-Shtukas coming from $p$-divisible groups are very special. Namely their singularities must be bounded by a minuscule coweight.
\end{ex}

\begin{rmk}
What we just defined are local Shtukas with singularities at the closed point $s\in D$. One can also define local Shtukas with singularities at the generic point $\eta\in D$, or even with singularities moving along $D$. 
In mixed characteristic, local Shtukas with singularities along $D$ are closely related the Breuil-Kisin modules (\cite{SW}).
\end{rmk}

\begin{rmk}
In equal characteristic, one can define global $G$-Shtukas, where $D$ is replaced by a global algebraic curve over a finite field. This is what Drinfeld originally invented, which vastly generalizes the notion of elliptic modules. However, currently it is not clear whether there exists analogue of global $G$-Shtukas in the number field setting. We refer to \cite{S} for some speculations.
\end{rmk}

The isomorphism \eqref{R:pre of Sht} clearly generalizes to an isomorphism 
$$\frac{LG\times^KLG\times^K\cdots\times^KLG}{\Ad_\sigma K}\cong \on{Sht}^{r,\loc},$$ under which the map analogous to \eqref{E: psigma} also admits a moduli interpretation, usually called the \emph{partial Frobenius map}.
Fix a sequence of dominant coweights $\mmu=(\mu_1,\ldots,\mu_r)$ as above and set $\mu_0=\sigma(\mu_r)$. Then we define the partial Frobenius map as
\begin{equation*}
\label{hpf}
\xymatrix@R=0pt{
F_\mmu\colon \on{Sht}_{(\mu_1,\ldots,\mu_r)}^{r,\loc} \ar[r] & \on{Sht}^{r,\loc}_{(\sigma(\mu_{r}),\mu_1,\ldots,\mu_{r-1})} \\ 
(\mE_r\stackrel{\beta_r}{\dashrightarrow} \mE_{r-1}\stackrel{\beta_{r-1}}{\dashrightarrow}\cdots\stackrel{\beta_2}{\dashrightarrow} \mE_1\stackrel{\beta_1}{\dashrightarrow} {^\sigma}\mE_r) \ar@{|->}[r] & (\mE_{r-1}\stackrel{\beta_2}{\dashrightarrow}\cdots\stackrel{\beta_r}{\dashrightarrow} \mE_1\stackrel{\beta_1}{\dashrightarrow} {^\sigma}\mE_r\stackrel{\sigma(\beta_r)}{\dashrightarrow} {^\sigma}\mE_{r-1}),
}
\end{equation*}
which is an isomorphism of prestacks (as functors over perfect $k$-algebras).

We define the following correspondence of prestacks 
\begin{equation}
\label{E;Hk Shtuka}
\xymatrix{&\Hk^{s,t}(\Sht^\loc)\ar_{\overleftarrow{h}^\loc}[dl]\ar^{\overrightarrow{h}^\loc}[dr]&\\
\Sht^{s,\loc}&&\Sht^{t,\loc},
}
\end{equation}
where $\Hk^{s,t}(\Sht^\loc)$ is the prestack classifying, for each perfect $k$-algebra $R$, the following commutative diagram of modifications of $G$-torsors over $D_R$:
\begin{equation*}\label{qi for diff type}
\xymatrix@C=35pt{
\mE'_s \ar@{-->}[r]^{\beta'_s} \ar@{-->}[d]_\beta & \cdots  \ar@{-->}[r]^{\beta'_2} & \calE'_1 \ar@{-->}[r]^{\beta'_1} & {}^\sigma\mE'_s \ar@{-->}[d]^{\sigma(\beta)}\\
\mE_t \ar@{-->}[r]^{\beta_t} & \cdots  \ar@{-->}[r]^{\beta_2} & \calE_1 \ar@{-->}[r]^{\beta_1} & {}^\sigma\mE_t,
}
\end{equation*}
such that the top row (resp. bottom row) defines an $R$-point of $\Sht^{s,\loc}$ (resp. $\Sht^{t,\loc}$).
We call such a diagram a \emph{Hecke correspondence} from the top row to the bottom row. When $s=t=1$, we denote $\Hk^{s,t}(\Sht^\loc)$ simply by $\Hk(\Sht^\loc)$.
Then similar to \eqref{R:pre of Sht}, there is a canonical isomorphism 
$$\Hk(\Sht^\loc)\cong \frac{LG}{\Ad_\sigma K}\times_{\frac{LG}{\Ad_\sigma LG}} \frac{LG}{\Ad_\sigma K},$$
with $\overleftarrow{h}^\loc$ and $\overrightarrow{h}^\loc$ being natural projections.
For two sequences of dominant coweights $\la_\bullet$ and $\mmu$, let 
\[\Sht^{\loc}_{\la_\bullet\mid\mmu}:=(\overleftarrow{h}^\loc)^{-1}(\Sht^\loc_{\la_\bullet})\cap (\overrightarrow{h}^\loc)^{-1}(\Sht^\loc_{\mmu})\]
and sometimes denote the restriction of $\overleftarrow{h}^\loc$ and $\overrightarrow{h}^\loc$ to $\Sht^{\loc}_{\la_\bullet\mid\mmu}$ by $\overleftarrow{h}^\loc_{\la_\bullet}$ and $\overrightarrow{h}^\loc_\mmu$.
If we further require $\beta$ to have relative position $\leq \nu$, we obtain a closed sub-prestack $\Sht_{\la_\bullet|\mmu}^{\nu,\loc}$ of $\Sht_{\la_\bullet|\mmu}^{\loc}$.

\begin{rmk}
\label{R:fiber of hloc}
Note that fibers of the morphism $\overleftarrow{h}^\loc: \Hk(\Sht^\loc)\to\Sht^\loc$ are just affine Grassmannians. For every $\mu$, the fibers
of $\overleftarrow{h}^\loc: (\overrightarrow{h}^\loc)^{-1}(\Sht^{\loc}_\mu)\to \Sht^\loc$ are affine Deligne-Lusztig varieties (e.g. see \cite[\S 3.1]{Zh1} for the definition).
\end{rmk}

As we already see from Example \ref{no singularity}, $\Sht^\loc_\mu$ is not algebraic so the general construction in \S \ref{SS: cat of corr} does not apply in the current situation directly. In order to define the category in this setting, we need to replace the moduli spaces by their finite dimensional approximations. 
We sketch the construction here and refer to  \cite[\S 5]{XZ} for details. A group theoretical description of the constructions is given in Remark \ref{R: grp theoretical Sht}.

Recall that $K$ acts on the convolution Grassmannians. Given a sequence $\mmu=(\mu_1,\ldots,\mu_r)$ of dominant coweights, an integer $m$ is called $\mmu$-large if it is $\geq \langle\sum_i\mu_i,\al_h\rangle$, where $\al_h$ is the highest root of $G$.
It is easy to see that given a pair of non-negative integers $(m,n)$ ($(m,n)=(\infty,\infty)$ allowed), if $m-n$ is $\mmu$-large
the action $K$ on $\Gr_\mmu^{(n)}$ factors through the action of $K_{m}$ (cf. \cite[Lemma 3.1.7]{XZ}). In this case, the fpqc quotient stack 
$\Hk^{\loc(m)}_\mmu:=[K_m\backslash \Gr_\mmu]$ is called the $m$-restricted local Hecke stack. 
By writing
\[
[K_m\backslash\Gr_\mmu]\cong [K_m\backslash\Gr^{(n)}_\mmu/K_n]\xrightarrow{t_\leftone\times t_\rightone} \bfB K_n\times \bfB K_m,
\]
we see that over $\Hk_{\mmu}^{\loc(m)}$, the first map $t_\rightone$ defines the $K_m$-torsor 
\begin{equation*}\label{torsor2}
\Gr_\mmu \to [K_m\backslash\Gr_\mmu],
\end{equation*}
and the second map $t_\leftone$ defines the $K_n$-torsor
\begin{equation*}\label{torsor1}
[K_m\backslash\Gr_{\mmu}^{(n)}]\to  [K_m\backslash\Gr_\mmu].
\end{equation*}
We write $\calE_{\rightone}|_{D_{m}}$ and $\calE_{\leftone}|_{D_n}$ for these two \emph{canonical} torsors. 

For $m\geq n$, let 
$$\res^m_n: \bfB K_m\to \bfB K_n$$ 
denote the map induced by the quotient map $\pi_{m,n}:K_m\to K_n$. Clearly, for $m\leq m'\leq m''$,
\begin{equation}
\label{E:ex and q compatibility}
\res_{m}^{m'} \circ \res_{m'}^{m''} = \res_{m}^{m''},
\end{equation}
where we allow $m''$ to be $\infty$.
Then for $m\leq m'$ and $n\leq n'$ ($(m',n')=(\infty,\infty)$ allowed) such that both $m'-n'$ and $m-n$ are $\mmu$-large, there
is a restriction map
$$\res^{m'}_m: \Hk^{\loc(m')}_{\mmu}\to \Hk^{\loc(m)}_{\mmu}$$ such that
the following diagram is commutative and the right square is Cartesian.
\begin{equation*}
\label{E:restriction and torsor commutative}
\xymatrix{
\bfB K_{n'} \ar[d]^{\res^{n'}_n} &
\Hk_\mmu^{\loc(m')} \ar[l]_-{t_\leftone} \ar[r]^-{t_\rightone} \ar[d]^{\res^{m'}_{m}} &  \bfB K_{m'} \ar[d]^{\res^{m'}_{m}}
\\ 
\bfB K_n & \Hk_\mmu^{\loc(m)} \ar[l]_-{t_\leftone} \ar[r]^-{t_\rightone} &  \bfB K_m.
}
\end{equation*}
In addition, these restrictions maps  satisfy the natural compatibility condition \eqref{E:ex and q compatibility}.

Now fix a sequence $\mmu$ of dominant coweights and choose a pair of integers $(m,n)$ such that $m-n$ is $\mmu$-large.
For a perfect $k$-algebra $R$, an $(m,n)$-restricted  local iterated $G$-Shtuka over $\Spec R$ with singularities bounded by $\mmu$ consists of
\begin{itemize}
\item an $R$-point of $ \Hk^{\loc(m)}_\mmu$,  and 
\item an isomorphism of $K_n$-torsors over $\Spec R$
$$\psi:{^\sigma} (\mE_\leftone|_{D_{n}}) \simeq(\mE_\rightone|_{D_{m}})|_{D_{n}}.$$ 
\end{itemize}
We define $\Sht^{\loc(m,n)}_\mmu$ as the prestack that classifies all $(m,n)$-restricted local $G$-Shtukas.  In other words, $\Sht_\mmu^{\loc(m,n)}$ is defined as the Cartesian product of $\Hk_\mmu^{\loc(m)}$ with a Frobenius graph
\begin{equation}
\label{E:mS restricted as a Cartesian product}
\xymatrix@C=90pt{
\Sht_\mmu^{\loc(m,n)} \ar[r]^{\varphi^{\loc(m,n)}} \ar[d]
&  \Hk_\mmu^{\loc(m)} \ar[d]^-{ t_\leftone\times \res^{m}_{n}\circ t_\rightone }
\\ \bfB K_n \ar[r]^{1\times \sigma} & \bfB K_n \times \bfB K_n,
}
\end{equation}
In particular, $\Sht_\mmu^{\loc(m,0)}=\Hk_\mmu^{\loc(m)}$, and $\Sht_\mmu^{\loc(\infty,\infty)}=\Sht_\mmu^\loc$.
Note that $\varphi^{\loc(m,n)}$ is a perfectly smooth morphism of relative dimension $n \dim G$. 
 
 Let $(m',n')$ and $(m,n)$ be two pairs of non-negative integers such that $m\leq m'$, $n\leq n'$ are both $m'-n'$ and $m-n$ are $\mmu$-large ($(m',n')=(\infty,\infty)$ allowed). 
There is the \emph{restriction morphism}
\begin{equation}\label{trunc}
\res^{m',n'}_{m,n}: \Sht^{\loc(m',n')}_\mmu\longto \Sht_\mmu^{\loc(m,n)},
\end{equation}
which is perfectly smooth of relative dimension $(m-n)-(m'-n')$, compatible with \eqref{E:mS restricted as a Cartesian product}, and satisfying
\begin{equation}
\label{true-to-trun}
\res^{m_2,n_2}_{m_1,n_1}\circ\res^{m_3,n_3}_{m_2,n_2}=\res^{m_3,n_3}_{m_1,n_1}.
\end{equation} 
 
\begin{ex}
\label{Ex:mStau mn}
When $\mu=0$ or more generally $\mu=\tau\in\xcoch(Z_G)$, we have 
$$\Sht^{\loc(n,n)}_\tau\cong \bfB G(\mO/\varpi^n).$$ 
The restriction map $\res_{n,n}: \Sht_\tau^\loc\cong\bfB G(\mO)\to \Sht_\tau^{\loc(m,n)}\cong\bfB G(\mO/\varpi^n)$ is induced by the natural map $G(\mO)\to G(\mO/\varpi^n)$.

\end{ex}

\begin{rmk}
\label{Ex:Sht and GZip}
As mentioned in Example \ref{loc Sht and pdiv}, moduli of local shtukas can be regarded as a generalization of the moduli of $p$-divisible groups with $G$-structures. It is natural to expect that the moduli of restricted local shtukas can be regarded as a generalization of moduli of truncated Barsotti-Tate groups with $G$-structures. It turns out that they are indeed closely related but the relation is slightly more complicated than the unrestricted case. As explained in \cite[Lemma 5.3.6]{XZ}, if $\mu$ is minuscule, and $(m,n)=(2,1)$,
there is a natural perfectly smooth map
\begin{equation}
\label{E:Sht-Zip}
\Sht_\mu^{\loc(2,1)}\to G\on{-Zip}^\pf_\mu.
\end{equation}
Here, $G\on{-Zip}^\pf_\mu$ is the perfection of the moduli of $F$-zips with $G$-structures as defined in \cite{PWZ}, which is homeomorphic to the moduli of $1$-truncated Barsotti-Tate groups with $G$-structure. 

We describe the morphism \eqref{E:Sht-Zip} in  the case when $G=\GL_n$ and $\mu=\omega_i$. In this case $G\on{-Zip}_{\omega_i}(R)$ classifies quadruples $(M, C^\bullet M, D_\bullet M, \phi_0,\phi_1)$, where $M$ is a finite projective $R$-module of rank $n$, $M=C^0M\supset C^1M\supset C^2M=0$ is a decreasing filtration and $D_{-1}M=0\subset D_0M\subset D_1M$ is an increasing filtration, such that $\phi_0:\sigma^*\on{gr}_C^0M\cong \on{gr}_0^DM$ is an isomorphism of finite projective $R$-modules of rank $i$ and $\phi_1: \sigma^*\on{gr}_C^1M\cong \on{gr}_1^DM$ is an isomorphism of finite projective $R$-modules of rank $n-i$. The relation between $F$-zips of the above type and $1$-truncated Barsotti-Tate groups of height $n$ and dimension $i$ was explained in \cite[\S 9.3]{PWZ}.

Note that giving an $R$-point of $\Sht_{\omega_i}^{\loc(2,1)}$, there is an honest map of finite projective $R$-modules 
$$\mE_\leftone|_{R}\xrightarrow{\beta} \mE_\rightone|_{R}\cong \sigma^*(\mE_\leftone|_{R}),$$ whose cokernel is a finite projective $R$-module of rank $i$.
Then \eqref{E:Sht-Zip} sends an $R$-point of $\Sht_{\omega_i}^{\loc(2,1)}$ to the $R$-point of $G\on{-Zip}_{\omega_i}$ represented by $M=\mE_\rightone|_{R}$, $C^1M=\on{Im} \beta$, and $D_0M=\ker(\sigma^*\beta)$. The isomorphism $\phi_1: \sigma^*(\on{gr}_C^1M)=\sigma^*\on{Im}\beta\cong \on{gr}^D_1M=\frac{M}{\ker\sigma^*\beta}$ is the canonical one. 
The canonical isomorphism $\phi_0: \sigma^*(\on{gr}_C^0M)=\sigma^*\frac{M}{\on{Im}\beta}\cong \ker(\sigma^*\beta)=\on{gr}^0_DM$ can be deduced from \cite[(1.3.2)]{Zh1}.
\end{rmk}

There exist Hecke correspondences between the various moduli spaces of restricted local iterated shtukas that are compatible with the correspondence \eqref{E;Hk Shtuka} via the restriction morphism \eqref{trunc}. In addition, these Hecke correspondences can be composed, and the compositions are associative. We will summarize these structures in Proposition \ref{P:pushforward is local}, which will allow us to define the category $\on{P}^{\Hk}(\Sht^\loc_{\bar k})$ in the next subsection.

It will be convenient to introduce the following terminology.
Given a sequence of dominant coweights $\mmu=(\mu_1,\ldots,\mu_t)$, a quadruple of non-negative integers $(m_1,n_1,m_2,n_2)$ is said to be $\mmu$-\emph{acceptable} if 
\begin{enumerate}
\item
$m_1-m_2=n_1-n_2$ are $\mu_t$-large (or equivalently $\sigma(\mu_t)$-large), 
\item 
$m_2-n_1$ is $(\mu_1, \dots, \mu_{t-1})$-large.
\end{enumerate} 
In particular, $m_1-n_1$ is $\mmu$-large. We regard $(\infty,\infty,\infty,\infty)$ to be $\mmu$-acceptable for any $\mmu$.

The following proposition summarizes a large part of \cite[\S 5.3]{XZ}.
For simplicity, we only consider moduli of restricted local Shtukas whose singularities are bounded by a single coweight.
\begin{proposition}
\label{P:pushforward is local} 
\begin{enumerate}
\item Let $\mu_1$, $\mu_2$ be two dominant coweights, and $\nu$ a dominant coweight.
Let $(m_1,n_1,m_2,n_2)$ be a quadruple that is $(\mu_1+\nu, \nu)$-acceptable and $(\mu_2+\nu,\nu)$-acceptable.
Then there exists the Hecke correspondence
\begin{equation}\label{deompf1}
\Sht_{\mu_1}^{\loc(m_1,n_1)}\xleftarrow{\overleftarrow{h}_{\mu_1}^{\loc(m_1,n_1)}} \Sht_{\mu_1\mid \mu_2}^{\nu,\loc(m_1,n_1)}\xrightarrow{\overrightarrow{h}_{\mu_2}^{\loc(m_2,n_2)}} \Sht_{\mu_2}^{\loc(m_{2},n_{2})}
\end{equation}
where $\Sht_{\mu_1|\mu_2}^{\nu,\loc(m_1,n_1)}$ is an algebraic stack independent of the choice of $(m_1,n_1)$.

Moreover, if $(m'_1,n'_1,m'_2,n'_2)\geq (m_1,n_1,m_2,n_2)$ is another quadruples satisfying the same conditions (the case $(m'_1,n'_1,m'_2,n'_2)=(\infty,\infty,\infty,\infty)$ being allowed).
Then the following diagram is commutative with the left diagram Cartesian
\begin{equation*}
\label{E:cycle construction diagram full}
\xymatrix@C=17pt{
\Sht_{\mu_1}^{\loc(m'_1,n'_1)}\ar_{\res^{m'_1,n'_1}_{m_1,n_1}}[d]
&\Sht_{\mu_1|\mu_2}^{\nu,\loc(m'_1,n'_1)}\ar_{\res}[d]\ar[l]\ar[r]& \Sht_{\mu_2}^{\loc(m'_2,n'_2)}\ar^{\res^{m'_2,n'_2}_{m_2,n_2}}[d]
\\
\Sht_{\mu_1}^{\loc(m_1,n_1)}&\Sht_{\mu_1|\mu_2}^{\nu,\loc(m_1,n_1)}\ar[l]\ar[r]& \Sht_{\mu_2}^{\loc(m_2,n_2)}.
}
\end{equation*}

\item Let $\mu_1,\mu_2,\mu_3,\nu_1,\nu_2$ be dominant coweights and let $(m_1,n_1,m_2,n_2,m_3,n_3)$ be a sextuple such that every quadruple $(m_i,n_i,m_{i+1},n_{i+1})$ is $(\mu_i+\nu_i,\nu_i)$-acceptable and $(\mu_{i+1}+\nu_i,\nu_i)$-acceptable. Then there is a natural perfectly proper morphism
\begin{equation*}
\label{E:composition restricted}
\on{Comp}_{\mu_1, \mu_2, \mu_3}^{\mathrm{loc}(m_1,n_1)}: \Sht_{\mu_1|\mu_2}^{\nu_1,\loc(m_1,n_1)} \times_{\Sht_{\mu_2}^{\loc(m_2,n_2)}} \Sht_{\mu_2|\mu_3}^{\nu_2,\loc(m_2,n_2)} \longto \Sht_{\mu_1|\mu_3}^{\nu_1+\nu_2,\loc (m_1,n_1)}.
\end{equation*}

Moreover, if $(m'_1,n'_1,m'_2,n'_2,m'_3,n'_3)\geq (m_1,n_1,m_2,n_2,m_3,n_3)$ is another sextuple satisfying the same conditions ($(m'_1,n'_1,m'_2,n'_2,m'_3,n'_3)=(\infty,\infty,\infty,\infty,\infty,\infty)$ being allowed), then the following diagram is commutative.
\begin{equation*}
\label{E:projection localization truncated} 
\xymatrix@C=20pt{
&&\Sht_{\mu_1|\mu_3}^{\nu_1+\nu_2,\loc(m'_1,n'_1)} 
\ar[ddd]\ar[dll]\ar[dr]
\\
\Sht^{\loc(m'_1,n'_1)}_{\mu_1} \ar[d]^{\res^{m'_1,n'_1}_{m_1,n_1}} &\ar[l]\ar[d] \Sht_{\mu_1\mid\mu_2}^{\nu_1,\loc(m'_1,n'_1)} \times_{\Sht_{\mu_2}^\loc(m'_2,n'_2)} \Sht_{\mu_2\mid\mu_3}^{\nu_2,\loc(m'_2,n'_2)} \ar[l] \ar[rr] \ar[ur]_-{\mathrm{Comp}^{\loc(m'_1,n'_1)}}&& \Sht^{\loc(m_3,n_3)}_{\mu_3} \ar[d]_{\res^{m'_3,n'_3}_{m_3,n_3}}
\\
\Sht_{\mu_1}^{\loc(m_1,n_1)}
&\ar[l]\Sht_{\mu_1|\mu_2}^{\nu_1,\loc(m_1,n_1)} \times_{\Sht_{\mu_2}^{\loc(m_2,n_2)}} \Sht_{\mu_2|\mu_3}^{\nu_2,\loc(m_2,n_2)} \ar[rd]^-{\mathrm{Comp}^{\loc(m_1,n_1)}} \ar[rr]
&&\Sht_{\mu_3}^{\loc(m_3,n_3)}
\\
&& \Sht_{\mu_1|\mu_3}^{\nu_1+\nu_2, \loc(m_1,n_1)}\ar[ull]\ar[ru].
}
\end{equation*}
In addition, the middle trapezoid is Cartesian.

\item Let $\mu_1, \mu_2, \mu_3, \mu_4, \nu_1, \nu_2, \nu_3$ be dominant coweights and $(m_1, n_1, m_2,n_2, m_3,n_3, m_4,n_4)$ be an octuple of non-negative integers such that $(m_i,n_i,m_{i+1},n_{i+1})$ is $(\mu_i+\nu_i,\nu_i)$-acceptable and $(\mu_{i+1}+\nu_i,\nu_i)$-acceptable. Then
\[\on{Comp}_{\mu_1,\mu_3, \mu_4}^{\mathrm{loc}(m_1,n_1)}\circ (\on{Comp}_{\mu_1,\mu_2, \mu_3}^{\mathrm{loc}(m_1,n_1)} \times \id)=\on{Comp}_{\mu_1,\mu_2, \mu_4}^{\mathrm{loc}(m_1,n_1)}\circ (\id \times \on{Comp}_{\mu_2,\mu_3, \mu_4}^{\mathrm{loc}(m_2,n_2)}).\]
\end{enumerate}
\end{proposition}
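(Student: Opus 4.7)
My strategy is to give a moduli-theoretic construction of $\Sht_{\mu_1|\mu_2}^{\nu,\loc(m_1,n_1)}$ that generalizes \eqref{E:mS restricted as a Cartesian product}, and then verify all three assertions by direct bookkeeping. For (1), I would first introduce an $m_1$-restricted Hecke correspondence stack $\Hk_{\mu_1|\mu_2}^{\nu,\loc(m_1)}$ whose $R$-points classify pairs of $m_1$-restricted Hecke data of types $\mu_1$ and $\mu_2$ together with a modification of their ``left'' torsors of singularity $\leq \nu$. This stack comes equipped with canonical torsors $\mE_{j,\leftone}|_{D_{n_j}}$ and $\mE_{j,\rightone}|_{D_{m_j}}$ for $j=1,2$, where the $(\mu_j+\nu,\nu)$-acceptability of the quadruple $(m_1,n_1,m_2,n_2)$ is precisely what guarantees these torsors are defined at the declared truncation levels and that the $\nu$-modification relates them consistently. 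The stack $\Sht_{\mu_1|\mu_2}^{\nu,\loc(m_1,n_1)}$ is then the fiber product obtained by imposing the two Frobenius graphs $\sigma(\mE_{j,\leftone}|_{D_{n_j}}) \simeq \mE_{j,\rightone}|_{D_{m_j}}|_{D_{n_j}}$ on top of $\Hk_{\mu_1|\mu_2}^{\nu,\loc(m_1)}$, in complete analogy with \eqref{E:mS restricted as a Cartesian product}.

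For the Cartesianness of the left square and the independence (up to canonical isomorphism) of the stack on the choice of $(m_1,n_1)$, I would compare with the unrestricted version $\Sht_{\mu_1|\mu_2}^{\nu,\loc}$, noting that at the Hecke stack level the acceptability conditions ensure that the restriction map $\Hk_{\mu_1|\mu_2}^{\nu,\loc(m'_1)} \to \Hk_{\mu_1|\mu_2}^{\nu,\loc(m_1)}$ merely extends torsors and modification data to higher truncation levels, and all such extensions are rigidified by the Frobenius graphs once imposed. The Cartesian diagram then reduces to the corresponding Cartesian diagram of canonical torsors on the Hecke stacks, which holds tautologically by design.

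For (2), the composition morphism is obtained by concatenating two left-modifications: given $R$-points of $\Sht_{\mu_1|\mu_2}^{\nu_1,\loc(m_1,n_1)}$ and $\Sht_{\mu_2|\mu_3}^{\nu_2,\loc(m_2,n_2)}$ agreeing on the middle $\Sht_{\mu_2}^{\loc(m_2,n_2)}$-datum, the two modifications $\mE_{1,\leftone} \dashrightarrow \mE_{2,\leftone}$ and $\mE_{2,\leftone} \dashrightarrow \mE_{3,\leftone}$ compose to a modification of type $\leq \nu_1+\nu_2$ via the convolution map \eqref{conv m2}; perfect properness of $\on{Comp}$ is inherited from the perfect properness of that convolution map. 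The compatibility with restriction and the Cartesianness of the middle trapezoid then follow by the same acceptability bookkeeping used in (1), applied now to the three-step chain of modifications. Finally, (3) is formal from the associativity of concatenation of $G$-torsor modifications on $D_R$, equivalently from the associativity of the three-fold convolution map \eqref{conv m}. The main obstacle is not conceptual but combinatorial: the two parts of acceptability are engineered precisely so that each modification, Frobenius identification, and concatenation produces data at the declared truncation level without loss or gain of information, and the geometry on the affine Grassmannian side (properness of convolution maps, dimension bounds on Schubert varieties) transfers to the Shtuka side through these finite-level approximations.
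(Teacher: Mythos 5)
The paper does not prove this proposition: the sentence preceding it explicitly says it ``summarizes a large part of [XZ, \S 5.3],'' and Remark~\ref{R: grp theoretical Sht} defers the precise group-theoretic description of $\Sht_{\mu_1|\mu_2}^{\nu,\loc(m_1,n_1)}$ to [XZ, Remark~5.3.15]. So there is no in-paper argument to compare your sketch against.

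Your route --- first build a restricted Hecke correspondence stack carrying canonical torsors, then impose Frobenius graphs in the pattern of \eqref{E:mS restricted as a Cartesian product} --- is a plausible outline and probably parallels what is done in [XZ]. But as written, the sketch elides exactly the part that carries all the weight. You say the acceptability conditions ``are engineered precisely so that each modification, Frobenius identification, and concatenation produces data at the declared truncation level,'' but you do not show this, and it is not automatic. What actually needs verification: (i) that the modification $\beta$ of singularity $\leq\nu$, which a priori is only an isomorphism over the punctured disc, induces the passage between truncated torsors at levels $(m_1,n_1)$ and $(m_2,n_2)$ with the loss of level controlled by $\langle\nu,\al_h\rangle$, which is precisely why the condition $m_1-m_2=n_1-n_2\geq\langle\nu,\al_h\rangle$ is imposed; (ii) the asserted independence of $\Sht_{\mu_1|\mu_2}^{\nu,\loc(m_1,n_1)}$ on $(m_1,n_1)$, which sits in tension with the middle vertical arrow of the displayed diagram being labeled $\res$ rather than being an isomorphism, so the precise meaning of ``independent'' needs to be unpacked rather than asserted; (iii) Cartesianness of the left square, which is a rigidity statement --- a lift of the $\mu_1$-component to a finer level uniquely lifts the entire Hecke correspondence --- and is not a tautology about torsors; and (iv) properness of $\on{Comp}$, for which one must identify its source with a closed substack of an iterated restricted Hecke stack and check that the convolution map descends to the quotients and remains (perfectly) proper there, a point that uses the acceptability of the sextuple non-trivially and is not a formal consequence of properness of \eqref{conv m2}. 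Your proposal treats all of these as already handled; they are the content of the proposition.
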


\begin{remark}
\label{R: grp theoretical Sht}
We give a group theoretical explanation of the moduli of restricted local $G$-Shtukas.
Given an $(m,n)$-restricted local $G$-Shtuka, by trivialization both $\calE_\rightone|_{D_{m}}$ and $\calE_\leftone|_{D_{n}}$, the isomorphism $\psi$ defines an element in $K_n$.
Note that $\Gr_\mmu^{(n)}$ is exactly the moduli space that classifies trivializations of both $\calE_\rightone|_{D_{m}}$ and $\calE_\leftone|_{D_{n}}$ on $\Hk^{\loc(m)}_\mmu$. It follows that
\[
\Sht_{\mmu}^{\loc(m,n)}\cong \frac{\Gr^{(n)}_{\mmu}\times^{K_n,\sigma} K_n}{\Ad K_m},
\]
where the right hand side denotes the quotient of $\Gr_{\mmu}^{(n)}\times K_n$ by $K_m\times K_n$ with the action given by 
$$(k_m,k_n)\cdot (x,k)=(k_mxk_n^{-1},\sigma(k_n)kk_m^{-1}), \quad (k_m,k_n)\in K_m\times K_n, \ (x,k)\in \Gr_\mmu^{(n)}\times K_n.$$
In particular, the Frobenius automorphism of $\Gr_{\mu}^{(n)}$ then induces an isomorphism
\[\sigma: \Sht_\mu^{\loc(m,n)}\cong \frac{\Gr_{\mu}^{(n)}}{\Ad_\sigma K_m}.\]
(In equal characteristic, these spaces admit canonical deperfection, and  the Frobenius endomorphism of $\Gr_{\mu}^{(n)}$ induces a universal homeomorphism between $\Sht_\mu^{\loc(m,n)}$ and $\frac{\Gr_{\mu}^{(n)}}{\Ad_\sigma K_m}$.)
Therefore, later on when we define the category of perverse sheaves on $\Sht^\loc_{\bar k}$, one can use either $\Sht_{\mmu}^{\loc(m,n)}$ or $\frac{\Gr^{(n)}_{\mmu}\times^{K_n,\sigma} K_n}{\Ad K_m}$. However, although the latter appears to be simpler to describe (group theoretically), the former space is need to relate to Shimura varieties (or moduli of global Shtukas). There is a similar group theoretical description of $\Sht_{\mu_1\mid \mu_2}^{\nu,\loc(m_1,n_1)}$, for which we refer to \cite[Remark 5.3.15]{XZ} for details.
\end{remark}

\subsection{The category $\on{P}^\Hk(\Sht^\loc_{\bar k})$}
After the preparation of the last subsection, it is possible to define the category $\on{D}^\Hk(\Sht^\loc_{\bar k})$, which in light of Lemma \ref{L: fully faithful}, can be thought as the replacement of (a subcategory of) the category of $\Ad_\sigma LG$-equivariant sheaves on $LG$. In fact, we will be mainly focusing on the subcategory $\on{P}^{\Hk}(\Sht^\loc_{\bar k})\subset \on{D}^\Hk(\Sht^\loc_{\bar k})$, which will be the receiver of our categorical trace of the geometric Satake.

First, it is easy to define 
the category $\mathrm{P}(\Sht_{\bar k}^\loc)$ of perverse sheaves on the moduli of local shtukas (base changed to $\bar k$). 
Let $\mmu$ is a sequence of dominant coweights. For two pairs $(m',n'), (m,n)$ of non-negative integers, satisfying $m \leq m', n\leq n'$ and both $m'-n'$ and $m-n$ are $\mmu$-large ($m'\neq \infty$), the functor
$$\Res^{m',n'}_{m,n}=(\res^{m',n'}_{m,n})^![((m'-n')-(m-n))\dim G]:\on{P}(\Sht^{\loc(m,n)}_{\mmu})\to \on{P}(\Sht^{\loc(m',n')}_{\mmu})$$ 
of the shifted $!$-pullback of sheaves is perverse exact. By \eqref{true-to-trun}, there is a canonical isomorphism of functors
\[\Res^{m',n}_{m,n}\circ\Res^{m',n'}_{m',n}\cong \Res^{m',n'}_{m,n}.\]
The functor $\Res^{m',n}_{m,n}$ induces an equivalence of categories if $m\geq 1$. On the other hand, $\res^{m',n'}_{m',n}$ is a torsor under $\Ker\big( \Aut(\calE_\rightone|_{D_{n'}}) \to \Aut(\calE_\rightone|_{D_{n}}) \big)$, which is again the perfection of a unipotent (non-constant) group scheme if $n\geq 1$, therefore, $\Res^{m',n'}_{m',n}$ is fully faithful.

Then we can define the category of perverse sheaves on $\Sht^\loc$  as (a direct sum of) filtered limits
\begin{equation}
\label{E:definition of P(Sloc)}
\mathrm{P}(\Sht_{\bar k}^\loc): =\varinjlim_{(\mu, m,n)} \mathrm{P}(\Sht_\mu^{\loc(m, n)}),
\end{equation}
where the limit is taken over the triples $\{(\mu, m, n) \in \xcoch(T)/W \times \ZZ^2_{\geq 0} \mid m-n \mbox{ is }\mu\mbox{-large}\}$, with the product partial order, and 
where the connecting functor is given by the following composite of fully faithful functors
\[
\mathrm{P}(\Sht_{\mu}^{\loc(m, n)}) 
\xrightarrow{\Res^{m',n'}_{m,n}}
\mathrm{P}(\Sht_{\mu}^{\loc(m', n')})
\xrightarrow{i_{\mu,\mu',*}}
\mathrm{P}(\Sht_{\mu'}^{\loc(m' n')}), 
\]
and where $i_{\mu, \mu'}: \Sht_{\mu}^{\loc(m', n')} \to \Sht_{\mu'}^{\loc(m', n')}$ is the natural closed embedding. 
Note that these connecting functors satisfy natural compatibility conditions given by proper or smooth base change so the limit indeed makes sense.
By replacing $\on{P}(\Sht_{\mu}^{\loc(m, n)})$ by $\on{D}(\Sht_{\mu}^{\loc(m, n)})$ in the above colimit, we also define $\on{D}(\Sht_{\bar k}^\loc)$, which is equipped with a perverse $t$-structure.

\begin{rmk}
\label{R: sheaves on Shtloc}
According to \eqref{R:pre of Sht} and Remark \ref{R: grp theoretical Sht}, we can interpret the above definition in a more group theoretical language. Namely, we write
\[\frac{LG}{\Ad_\sigma K}=\underrightarrow\lim_{\mu}\underleftarrow\lim_n \frac{\Gr_\mu^{(n)}}{\Ad_\sigma K},\]
and therefore define
\[\on{P}(\Sht^\loc_{\bar k})=\underrightarrow\lim_{\mu,m,n}\on{P}(\frac{\Gr^{(n)}_\mu}{\Ad_\sigma K_m}).\]
\end{rmk}

For each dominant coweight $\mu$ and a pair $(m,n)$ such that $m-n$ is $\mu$-large, we have a natural pullback functor
\begin{equation*}
\label{E:functor P(Hk) to P(S)fin}
\Phi^{\loc(m,n)}:=\Res^{m,n}_{m,0}: \on{P}_{K\otimes \bar k}(\Gr_\mu)\cong \on{P}(\Hk_\mu^{\loc(m)})\to \on{P}(\Sht_\mu^{\loc(m,n)}),
\end{equation*}
which commutes the above connecting morphism by \eqref{true-to-trun} and the proper smooth base change. 
Taking the colimit, we obtain a well-defined functor
\begin{equation}
\label{E:functor P(Hk) to P(S)}
\Phi^{\loc}: \mathrm{P}_{K\otimes\bar k}(\Gr\otimes\bar k) \to \mathrm{P}(\Sht_{\bar k}^\loc).
\end{equation}

\begin{rmk}
\label{R: more object}
Note that there are many objects in $\mathrm{P}(\Sht_{\bar k}^\loc)$ that do not come from $\on{P}_{K\otimes \bar k}(\Gr\otimes\bar k)$ under the above functor $\Phi^{\loc}$. For example, as mentioned in Example \ref{Ex:mStau mn}, $\Sht^{\loc(n,n)}_0=\bfB G(\mO/\varpi^n)$. Therefore, every representation $\rho$ of the finite group $G(\mO/\varpi^n)$ defines a local system on $\Sht^{\loc(n,n)}_0$, and therefore an object  in $\mathrm{P}(\Sht_{\bar k}^\loc)$. This object does not lie in the essential image of $\Phi^\loc$ (as soon as $n>0$). More generally, for every $\mu$, and every representation $\rho$ of $G(\mO)$, there is an object $\mF_{\mu,\rho}$ in $\mathrm{P}(\Sht_{\bar k}^\loc)$ coming from the nearby cycle functors construction \cite{GL}. 
\end{rmk}

We now define the category $\rmP^{\Hk}(\Sht^\loc_{\bar k})$. It has the same objects as $\rmP(\Sht_{\bar k}^\loc)$. And we just need to explain the space of morphisms and compositions of them as follows.
An object $\mF$ of $\rmP(\Sht_{\bar k}^\loc)$ (or equivalently of $\rmP^\mathrm{Hk}(\Sht^\loc_{\bar k})$) is called connected if it comes from $\rmP(\Sht_\mu^{\loc(m,n)})$ for some $(\mu,m, n)$ in the limit \eqref{E:definition of P(Sloc)}. In this case, we denote by $\mF_\mu^{(m,n)}\in \rmP(\Sht_\mu^{\loc(m,n)})$ the unique (up to a unique isomorphism) object that represents $\mF$. Since every object of $\rmP(\Sht_{\bar k}^\loc)$ is a direct sum of connected objects, it is enough to define the space  of morphisms between two connected objects $\calF_1, \calF_2$, and to extend the definition of the morphisms between general objects by linearity.

Recall that for a septuple $(\mu_1,\mu_2,\nu,m_1,n_1,m_2,n_2)$ such that  $(m_1,n_1,m_2,n_2)$ is $(\mu_1+\nu,\nu)$-acceptable and $(\mu_2+\nu,\nu)$-acceptable,
there is the correspondence  \eqref{deompf1} from Proposition \ref{P:pushforward is local}. Given another septuple $(\mu'_1,\mu'_2,\nu',m'_1,n'_1,m'_2,n'_2)\geq (\mu_1,\mu_2,\nu,m_1,n_1,m_2,n_2)$ satisfying similar acceptability conditions, there is the following diagram
\begin{equation*}
\label{E: loc Hecke corr changing level}
\begin{CD}
\Sht_{\mu_1}^{\loc(m_1,n_1)}@<\res^{m'_1,n'_1}_{m_1,n_1}<< \Sht_{\mu_1}^{\loc(m'_1,n'_1)}@>i_{\mu_1,\mu'_1}>> \Sht_{\mu'_1}^{\loc(m'_1,n'_1)}\\
@AAA@AAA@AAA\\
\Sht_{\mu_1\mid\mu_2}^{\nu,\loc(m_1,n_1)}@<\res<<\Sht_{\mu_1\mid\mu_2}^{\nu,\loc(m'_1,n'_1)}@>i>>\Sht_{\mu'_1\mid\mu'_2}^{\nu',\loc(m'_1,n'_1)}\\
@VVV@VVV@VVV\\
\Sht_{\mu_2}^{\loc(m_2,n_2)}@<\res^{m'_2,n'_2}_{m_2,n_2}<<\Sht_{\mu_2}^{\loc(m'_2,n'_2)}@>i_{\mu_2,\mu'_2}>>\Sht_{\mu'_2}^{\loc(m'_2,n'_2)}
\end{CD}
\end{equation*}
where arrows to the left are equidimensional, perfectly smooth, of the same relative dimension, and arrows to the right are closed embeddings. Note that the upper left square is Cartesian by Proposition \ref{P:pushforward is local} and all arrows in the upper right square are perfectly proper.

Now, for two connected objects $\mF_1,\mF_2$ of $\on{P}^\Hk(\Sht^\loc_{\bar k})$, we define $\mathrm{Mor}_{\rmP^\Hk(\Sht^\loc_{\bar k})} (\calF_1, \calF_2)$ as
\begin{equation*}
\label{E: hom space of PcorrSht}
\varinjlim_{(\mu_1,\mu_2,\nu,m_1,n_1,m_2,n_2)} \mathrm{Corr}_{\Sht_{\mu_1|\mu_2}^{\nu, \loc(m_1,n_1)}} \big( (\Sht_{\mu_1}^{\loc(m_1,n_1)}, \calF_{1,\mu_1}^{(m_1,n_1)}),\,( \Sht_{\mu_2}^{\loc(m_2,n_2)},\calF_{2,\mu_2}^{(m_2,n_2)}) \big),
\end{equation*}
where the colimit is taken over all (partially ordered) sextuples $(\mu_1,\mu_2,\nu,m_1,n_1,m_2,n_2)$ satisfying the acceptability conditions as above and such that $\mF_{i,\mu_i}^{(m_i,n_i)}$ represents $\mF_i$, and where the connecting map of colimit is given by $i_!\circ\res^![((m'-n')-(m-n))\dim G]$, where $i_!$ denotes the pushforward of cohomological correspondence along $i$ as in \eqref{ASS:pushforward correspondence} and where $\res^!$ is the pullback of cohomological correspondences as in \eqref{ASS:smooth pullback correspondence2}. As explained in \cite[\S 5.4.1]{XZ}, these connecting maps compose and the colimit is well-defined.

Next, we explain the composition of morphisms. By linearity, it suffices to define this on the connected objects and we may assume that the given morphisms $c_1: \calF_1 \to \calF_2$ and $c_2: \calF_2 \to \calF_3$ are realized as correspondences (as opposed to linear combinations of correspondences)
\begin{align*}
c^{(\mu_1,\mu_2,\nu_1,m_1,n_1,m_2,n_2)}_1 \in \mathrm{Corr}_{\Sht_{\mu_1|\mu_2}^{\nu_1,\loc(m_1,n_1)}}\big( (\Sht_{\mu_1}^{\loc(m_1, n_1)}, \calF_{1,\mu_1}^{(m_1,n_1)}),\ (\Sht_{\mu_2}^{\loc(m_2, n_2)}, \calF_{2,\mu_2}^{(m_2,n_2)}) \big)&, \quad\textrm{and}\\
c^{(\mu_2,\mu_3,\nu_2,m_2,n_2,m_3,n_3)}_2 \in \mathrm{Corr}_{\Sht_{\mu_2|\mu_3}^{\nu_2,\loc(m_2,n_2)}}\big( (\Sht_{\mu_2}^{\loc(m_2, n_2)},\ \calF_{2,\mu_2}^{(m_2,n_2)}) \,\ (\Sht_{\mu_3}^{\loc(m_3, n_3)}, \calF_{3,\mu_3}^{(m_3,n_3)})\big)&,
\end{align*}
where the septuples satisfy the conditions as above.
According to the formalism of the composition of cohomological correspondences (see Definition~\ref{AD:correspondences}), the composition $c^{(\mu_2,\mu_3,\nu_2,m_2,n_2,m_3,n_3)}_2 \circ 
c^{(\mu_1,\mu_2,\nu_1,m_1,n_1,m_2,n_2)}_1$ is supported on the following correspondence
\[
\Sht_{\mu_1}^{\loc(m_1,n_1)} \leftarrow \Sht_{\mu_1|\mu_2}^{\nu_1,\loc(m_1, n_1)} \times_{ \Sht_{\mu_2}^{\loc(m_2,n_2)}}
\Sht_{\mu_2|\mu_3}^{\nu_2,\loc(m_2, n_2)}  \to  \Sht_{\mu_3}^{\loc(m_3,n_3)}.
\]

We then define the composition $c_2\circ c_1$ to be the cohomological correspondence from $\calF_1$ to $\calF_3$ so that $(c_2\circ c_1)^{(\mu_1,\mu_3,\nu_1+\nu_2,m_1,n_1,m_3,n_3)}$ is given by the push forward of $c^{(\mu_2,\mu_3,\nu_2,m_2,n_2,m_3,n_3)}_2 \circ 
c^{(\mu_1,\mu_2,\nu_1,m_1,n_1,m_2,n_2)}_1$ along the perfectly proper morphism
\[
\Sht_{\mu_1|\mu_2}^{\nu_1,\loc(m_1, n_1)} \times_{ \Sht_{\mu_2}^{\loc(m_2,n_2)}}
\Sht_{\mu_2|\mu_3}^{\nu_2,\loc(m_2, n_2)} \xrightarrow{\mathrm{Comp}^{\loc(m_1,n_1)}}\Sht_{\mu_1|\mu_3}^{\nu_1+\nu_2,\loc(m_1, n_1)},
\]
in the sense of \eqref{ASS:pushforward correspondence}. Using Proposition \ref{P:pushforward is local} and several lemmas in \S \ref{Sec:cohomological correspondence}, one can show that $c_2\circ c_1$ is well-defined, and satisfies the natural associativity law $(c_3\circ c_2) \circ c_1 \cong c_3 \circ (c_2 \circ c_1)$. The construction of $\on{P}^{\Hk}(\Sht^\loc_{\bar k})$ now is complete.

We describe the endomorphism ring of some objects in $\on{P}^{\on{Hk}}(\Sht^\loc_{\bar k})$. 
For every $n>0$, let $\on{reg}_n$ denote the regular representation of $G(\mO/\varpi^n)$. It defines a local system
$\delta_{\on{reg}_n}^{(n,n)}$ on $\on{Sht}^{\loc(n,n)}_{0}=\bfB G(\mO/\varpi^n)$ (see Remark \ref{R: more object}). Let $\delta_{\on{reg}_n}$ be the corresponding object in $\on{P}^{\Hk}(\Sht^\loc_{\bar k})$.
Note that $\delta_{\on{reg}_0}$ is contained in the image of the functor \eqref{E:functor P(Hk) to P(S)}. We also denote it by $\widetilde{\delta_{e}}$.
The following statement is the affine analogue of \eqref{E:endo for unit} and \eqref{E:endo for reg}.
\begin{prop}
\label{P: endo of unit}
There is a canonical isomorphism
$$\End_{\on{P}^{\Hk}(\Sht^\loc_{\bar k})}(\delta_{\on{reg}_n})\cong C_c(K(n)\backslash G(F)/K(n),E)^{\on{op}},$$ where $K(n)=K^{(n)}(k)$ is the $n$th principal congruence subgroup, and  $ C_c(K(n)\backslash G(F)/K(n),E)$ denotes the corresponding Hecke algebra.
In particular,  $\End_{\on{P}^{\Hk}(\Sht^\loc_{\bar k})}(\widetilde{\delta_{e}})$ is isomorphic to the $E$-valued spherical Hecke algebra $H_G\otimes E=C_c(G(\mO)\backslash G(F)/G(\mO),E)$.
\end{prop}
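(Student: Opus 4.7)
The plan is to unwind the colimit defining the endomorphism algebra and match it term-by-term with the Cartan decomposition of the Hecke algebra. The object $\delta_{\on{reg}_n}$ is connected and represented at level $(n,n)$ by a local system $\delta^{(n,n)}_{\on{reg}_n}$ on $\Sht^{\loc(n,n)}_0\cong\bfB G(\mO/\varpi^n)$ associated to the regular representation of $G(\mO/\varpi^n)$. From the construction of morphisms in $\on{P}^\Hk(\Sht^\loc_{\bar k})$, the endomorphism algebra is a colimit
\[
\varinjlim_{\nu,(m_1,n_1,m_2,n_2)}\on{Corr}_{\Sht^{\nu,\loc(m_1,n_1)}_{0|0}}\!\bigl((\Sht^{\loc(m_1,n_1)}_0,\mF^{(m_1,n_1)}),(\Sht^{\loc(m_2,n_2)}_0,\mF^{(m_2,n_2)})\bigr)
\]
indexed by dominant coweights $\nu$ and $(\nu,\nu)$-acceptable quadruples $(m_1,n_1,m_2,n_2)\geq(n,n,n,n)$, with $\mF^{(m_i,n_i)}$ the canonical lift of $\delta^{(n,n)}_{\on{reg}_n}$ obtained by iterated application of $\Res$. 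Since the connecting maps in the colimit are fully faithful and the relevant squares in Proposition~\ref{P:pushforward is local}(1) are Cartesian, the dependence on the quadruple is inessential and the colimit is effectively indexed by $\nu$ alone.

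Second, I would identify each term of the colimit. Using the group-theoretic description of restricted local Shtukas (Remark~\ref{R: grp theoretical Sht}) together with Lang's theorem, the Hecke correspondence $\Sht^{\nu,\loc(n,n)}_{0|0}$ with its two legs to $\bfB G(\mO/\varpi^n)$ is, at the level of $\bar k$-points, canonically identified with the discrete correspondence
\[
\bfB G(\mO/\varpi^n)\ \xleftarrow{\ t_\leftone\ }\ \bigl[K(n)\backslash K\nu(\varpi)K/K(n)\bigr]\ \xrightarrow{\ t_\rightone\ }\ \bfB G(\mO/\varpi^n),
\]
where $K=G(\mO)$, $K(n)=K^{(n)}(k)$, and both projections are the canonical ones on double cosets. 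A cohomological correspondence from $\delta_{\on{reg}_n}$ to itself supported on such a discrete correspondence of classifying stacks of the finite group $H=G(\mO/\varpi^n)$ is the same datum as an element of $\on{Hom}_{H\times H}\bigl(\on{reg}_n\boxtimes\on{reg}_n^{\vee},E\bigl[K(n)\backslash K\nu(\varpi)K/K(n)\bigr]\bigr)$, which for the regular representation is canonically the space of $E$-valued functions on $K(n)\backslash K\nu(\varpi)K/K(n)$. Summing over $\nu$ and invoking the Cartan decomposition \eqref{cartan2} gives the desired identification of $E$-modules.

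Third, I would verify that composition matches convolution. Composing two cohomological correspondences supported on $\Sht^{\nu_1,\loc(n,n)}_{0|0}$ and $\Sht^{\nu_2,\loc(n,n)}_{0|0}$ is, by construction, the pushforward of the exterior product along the perfectly proper composition map $\on{Comp}^{\loc(n,n)}_{0,0,0}$ of Proposition~\ref{P:pushforward is local}(2). On $\bar k$-points this map is the multiplication $(g_1,g_2)\mapsto g_1g_2$ of double cosets, and the pushforward on functions is precisely the convolution formula \eqref{E:conv prod alge}, but with the arguments in opposite order relative to the categorical composition convention; this accounts for the appearance of the opposite algebra. Specializing to $n=0$ recovers $\End(\widetilde{\delta_e})\cong H_G\otimes E$, with the opposite being absorbed by commutativity of the spherical Hecke algebra.

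The main obstacle will be the second step: carrying out the Lang's theorem reduction in families at each restricted level to obtain a functorial identification of $\Sht^{\nu,\loc(n,n)}_{0|0}$ with the discrete double-coset stack, and pinning down the Grothendieck sheaf-to-function translation in the equivariant setting so that the $H\times H$-equivariant section spaces exhaust all $E$-valued functions on the double cosets, rather than merely a prescribed subspace.
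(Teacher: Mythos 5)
Your architecture — unwind the colimit, reduce to finite level via Lang's theorem, identify the terms with function spaces on double cosets, and match composition with convolution (picking up the opposite along the way) — is the expected argument, and it parallels exactly how the paper justifies the finite analogues \eqref{E:endo for unit} and \eqref{E:endo for reg} via Example \ref{Ex:examples of correspondences}(5). The final identification $\Hom_{H\times H}(\on{reg}_n\boxtimes\on{reg}_n^\vee, E[S])\cong \on{Fun}(S,E)$, using self-duality of $\on{reg}_n$ together with Frobenius reciprocity, is correct, and the convolution/opposite-algebra discussion at the end is fine.

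The gap you flag at the end is genuine, but I would locate it slightly differently: the difficulty is not primarily the equivariant sheaf-to-function translation but the geometry of the \emph{restricted} levels. For a noncentral $\nu$, the acceptability condition for $(m_1,n_1,m_2,n_2)$ forces $m_1-m_2=n_1-n_2$ to be $\nu$-large, so the source and target of the Hecke correspondence live at \emph{different} finite levels with $(m_1,n_1)>(m_2,n_2)\geq(n,n)$; in particular, writing $\Sht^{\nu,\loc(n,n)}_{0|0}$ and letting the correspondence map to $\bfB G(\mO/\varpi^n)$ on both sides by ``the canonical projections'' skips the real content. The stack $\Sht^{\nu,\loc(m_1,n_1)}_{0|0}$ sits over $\Sht_0^{\loc(m_1,n_1)}\times\Sht_0^{\loc(m_2,n_2)}$, and the Lang reduction to a discrete double-coset correspondence over $\bfB G(\mO/\varpi^n)^{\times 2}$ has to be obtained by pulling the cohomological correspondence back along the perfectly smooth restriction morphisms $\res^{m_i,n_i}_{n,n}$ via the base-change squares of Proposition \ref{P:pushforward is local}, then checking (i) that this pullback lands on the discrete correspondence you describe, and (ii) that the identification is compatible with the connecting maps in the colimit as $(m_i,n_i)\to\infty$ and $\nu$ grows. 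Carrying this out, rather than the formal sheaf-to-function bookkeeping, is the content of the proposition; until it is done the proof is a plausible outline rather than a proof.
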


\begin{rmk}
Exactly the same definition gives $\on{D}^\Hk(\Sht^{\loc}_{\bar k})$. In addition,
if $\on{Iw}\subset K$ is an Iwahori subgroup, then one can similarly define the moduli of local Shtukas with Iwahori level structure, denoted by $\Sht^{\loc}_{\on{Iw}}$. Similarly, one can define the Hecke correspondence $\Hk(\Sht^{\loc}_{\on{Iw}})$, and the category $\on{D}^\Hk(\Sht^{\loc}_{\on{Iw},\bar k})$. As mentioned in Remark \ref{R:springer}, if $\cha E=0$, up to idempotent completion it is equivalent to $\on{D}^\Hk(\Sht^{\loc}_{\bar k})$.
\end{rmk}

\subsection{The categorical trace of the geometric Satake}
Now we are ready to state one of the key ingredients for our arithmetic applications in the next section. Let $\hat G$ be the Langlands dual group of $G$, defined over $\Ql$, as constructed as a Tannakian group via the geometric Satake (Theorem \ref{intro:geomSat} and Remark \ref{R: birth of the dual group}). It is equipped with an action of the geometric $q$-Frobenius $\sigma$ of $k$.

Recall from Example \ref{E: trace of rep} that the pullback functor $\on{Rep}(\hat G)\to \Coh(\frac{\hat G}{c_\sigma\hat G})$ identifies $\Coh_{fr}^{\hat G}(\hat G\sigma)$ with the $\sigma$-twisted categorical trace of $\on{Rep}(\hat G)$, where
$\Coh_{fr}^{\hat G}(\hat G\sigma)$ is the full subcategory of $\Coh(\frac{\hat G}{c_\sigma\hat G})$ spanned by those $\widetilde{V}$ (the pullback of algebraic representations $V$ of $G$). 

Given all the preparations, the following theorem now becomes natural.
\begin{thm}
\label{T: trace of geom Sat}
There is a functor
\[S: \Coh_{fr}^{\hat G}(\hat G\sigma)\to \on{P}^{\Hk}(\Sht^\loc_{\bar k})\]
making the following diagram commutative
\[
\begin{CD}
\on{Rep}(\hat G)@>\cong>>\on{P}_{K\otimes \bar k}(\Gr\otimes \bar k)\\
@VVV@VVV\\
\Coh_{fr}^{\hat G}(\hat G\sigma)@>>> \on{P}^{\Hk}(\Sht^\loc_{\bar k})
\end{CD}
\]
In particular, $S(\mO)=\widetilde{\delta_e}$, where $\mO$ denotes the structure sheaf of $\frac{\hat G}{c_\sigma \hat G}$.
 
In addition, the map
\[S: \mathbf J=\Ql[\hat G\sigma]^{\hat G}=\Gamma([\hat G\sigma/\hat G],\mO)\to \End(S(\mO))\cong H_G\otimes \Ql\]
coincides with the Satake isomorphism \eqref{Sat isom}.
\end{thm}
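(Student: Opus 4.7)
The plan is to invoke the universal property of the $\sigma$-twisted categorical trace. By Example~\ref{E: trace of rep}, we have a canonical identification $\Coh_{fr}^{\hat G}(\hat G\sigma)\cong \Tr_\sigma(\on{Rep}(\hat G))$, and by the geometric Satake (Theorem~\ref{intro:geomSat}), there is a symmetric monoidal equivalence $\on{Rep}(\hat G)\cong \Sat_G=\on{P}_{K\otimes\bar k}(\Gr\otimes\bar k)$. So it suffices to exhibit a $\sigma$-twisted trace functor structure on the composition $\Sat_G\xrightarrow{\Phi^\loc}\on{P}^{\Hk}(\Sht^\loc_{\bar k})$, which by the universal property of $\Tr_\sigma$ then factors through $\Tr_\sigma(\Sat_G)\cong \Coh_{fr}^{\hat G}(\hat G\sigma)$ to yield the desired functor $S$. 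Since $\mO$ corresponds to the trivial representation, which via geometric Satake is $\IC_0=\delta_e$, we immediately obtain $S(\mO)=\widetilde{\delta_e}$.

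The trace structure is built as the affine analogue of Construction~\ref{Con: S} and Corollary~\ref{C:trace str on tilde}. For $\mA_1,\mA_2\in\Sat_G$, one must produce a canonical isomorphism $\alpha_{\mA_1,\mA_2}:\Phi^\loc(\mA_1\star\mA_2)\cong \Phi^\loc(\mA_2\star\sigma^*\mA_1)$ in $\on{P}^{\Hk}(\Sht^\loc_{\bar k})$. The key geometric input is the existence of the moduli $\Sht^{2,\loc}$ of local 2-iterated $G$-Shtukas together with the partial Frobenius isomorphism $F:\Sht^{2,\loc}_{(\mu_1,\mu_2)}\cong \Sht^{2,\loc}_{(\sigma(\mu_2),\mu_1)}$. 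Using the multiplication map $m:\Sht^{2,\loc}\to\Sht^{\loc}$ and $\pFr$ as in \eqref{E: mult}--\eqref{E: psigma}, one forms cohomological correspondences $\widetilde{\coev}$, $\Gamma_{\pFr}^*$, and $\widetilde{\ev}$ between restricted moduli $\Sht^{\loc(m,n)}_\mu$, using the formalism reviewed in the previous subsection. The coherence diagram \eqref{E:cond trace} then amounts to a compatibility among these correspondences on $\Sht^{3,\loc}$, obtained by pushing forward to $\Sht^{\loc}$ via the composition maps $\on{Comp}^{\loc(m,n)}$ of Proposition~\ref{P:pushforward is local}. The associativity of these compositions (part (3) of that proposition) combined with the identity $\pFr\cdot m_1=m_{r-1}\cdot(\pFr)^2$ yields the required diagram.

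It remains to identify the induced map $\mathbf J\to \End(\widetilde{\delta_e})\cong H_G\otimes\Ql$ with the classical Satake isomorphism~\eqref{Sat isom}. By the Peter--Weyl description of $\mathbf J$, it suffices to check that for an irreducible representation $V_\mu$ of ${}^L G$ of highest weight $\mu\in\xcoch(T)^\sigma/W_0$, the character $\chi_{V_\mu}\in\mathbf J$ is sent to the function $\Sat^{cl}(1_{K\mu(\varpi)K})$. Equivalently, we must show the $S$-operator $S_{\IC_\mu}\in\End(\widetilde{\delta_e})$ corresponds under Proposition~\ref{P: endo of unit} to the function on $K\backslash G(F)/K$ given by the right-hand side of~\eqref{Sat isom}.

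This is the affine analogue of Proposition~\ref{P:S=T finite}; we imitate its proof. Mirroring diagram~\eqref{E: cal coh corr to local} but with $G/B$ replaced by $\Gr$ and $\frac{G}{\Ad_\sigma B}$ replaced by appropriate restricted local Shtuka stacks, we produce a three-row diagram of stacks whose top row computes the function attached to $\IC_\mu$ via the Braverman--Varshavsky trace formula (applied to $\Gr_\mu$ with the Frobenius self-correspondence) and whose bottom row computes $S_{\IC_\mu}$; the middle row is a suitable affine Deligne--Lusztig type space (cf.\ Remark~\ref{R:fiber of hloc}) that smoothly surjects onto both. Compatibility of smooth pullback with composition of cohomological correspondences (Lemma~\ref{AL:pullback compatible with composition}) forces the two to agree. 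The main obstacle is precisely this last step: the analogue of diagram~\eqref{E: cal coh corr to local} must be set up carefully with restricted moduli $\Sht^{\loc(m,n)}_\mu$ rather than the non-algebraic $\Sht^\loc_\mu$, and one must verify all relevant squares are Cartesian and the smooth maps have the correct relative dimension to invoke the Braverman--Varshavsky formula; this is carried out in detail in \cite[\S 6.3]{XZ}.
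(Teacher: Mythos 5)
Your proposal is correct and matches the paper's intended approach: exhibit a $\sigma$-twisted trace functor structure on $\Phi^\loc$ via the partial Frobenius and the composition maps of Proposition~\ref{P:pushforward is local}, invoke the universal property of $\Tr_\sigma$ together with Example~\ref{E: trace of rep} and the geometric Satake to factor through $\Coh^{\hat G}_{fr}(\hat G\sigma)$, and establish the $S=T$ identification by the affine analogue of the argument in Proposition~\ref{P:S=T finite}, using the Braverman--Varshavsky trace formula and Lemma~\ref{AL:pullback compatible with composition}. The paper itself presents the theorem as a consequence of these preparations and defers the detailed verification to \cite[\S 6.3]{XZ}, exactly as you do; the only thing worth flagging is that one should also record the compatibility, under geometric Satake, between the Frobenius pullback $\sigma^*$ on $\Sat_G$ and the induced automorphism $\sigma$ of $\hat G$ so that the two twisted traces being compared are genuinely identified, but this is standard and implicit in the paper's use of ${}^LG$.
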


\begin{rmk}
The functor $S$ should be fully faithful. This should follow from the study of irreducible components of certain affine Deligne-Lusztig varieties as in \cite[\S 4]{XZ}. It would be better to have a conceptual proof of this fact.
\end{rmk}

\begin{rmk}
\label{R:affine springer}
Most discussions in this section also apply to the setting
$X=\frac{LG}{\Ad K}$ and $Y=\frac{LG}{\Ad LG}$, where $K\subset LG$ acts on $LG$ by usual conjugation. E.g. we have the Hecke correspondence
\[\frac{LG}{\Ad K}\xleftarrow{\overleftarrow{h}}\Hk:= X\times_YX\xrightarrow{\overrightarrow{h}}\frac{LG}{\Ad K},\]
and the fibers of $\overleftarrow{h}: \overrightarrow{h}^{-1}(\frac{\Gr_\mu^{(\infty)}}{\Ad K})\to \frac{LG}{\Ad K}$ are so-called generalized (or group version) affine Springer fibers (compare to Remark \ref{R:fiber of hloc}).  
If $m-n$ is $\mu$-large, we have the space
$\frac{\Gr_\mu^{(n)}}{\Ad K_m}$, and similarly the Hecke stack between them. One can define $\on{D}^{\Hk}(\frac{LG}{\Ad K})$ similarly. Theorem \ref{T: trace of geom Sat} has a counterpart in this case, giving the affine Springer action.
\end{rmk}

\section{Applications to Shimura varieties}
\label{S:app to Sh}
We will construct cohomological correspondences between mod $p$ fibers of \emph{different} Shimura varieties, following \cite[\S 7]{XZ}. We have to assume in this section some familiarity with the basic theory of mod $p$ geometry of Shimura varieties. Readers can refer to \cite[\S 7.1, \S 7.2]{XZ} for some necessary background needed in the following discussions.

\subsection{Cohomological correspondences between mod $p$ fibers of Shimura varieties}

Let $(G,X)$ be a (weak) Shimura datum. We fix an open compact subgroup $K\subset G(\bA_f)$, and let
\[\Sh_K(G,X)=G(\bQ)\backslash X\times G(\bA_f)/K\]
denote the corresponding Shimura variety.  The Shimura datum defines a conjugacy class of one-parameter subgroups $\mu$ of $G_\bC$, usually called the Shimura cocharacter.
Let $(\hat G,\hat B,\hat T)$ be the Langlands dual group of $G$ (see Remark \ref{R: birth of the dual group}). Then $\mu$ determines an irreducible representation of $\hat G$ of highest weight $\mu$.
Recall that $\Sh_K(G,X)$ has a canonical model defined over the reflex field $E=E(G,X)\subset \bC$. The representation $V_\mu$ extends canonically to a representation of $\hat G\rtimes \Gal(\overline\bQ/E)$.
If  $(G,X)$ is of Hodge type, one can interpret $\Sh_K(G,X)$ as moduli of abelian varieties equipped with additional structures.

Now let $p>2$ be an unramified prime for $(G,X,K)$, by which we mean that $K_p\subset G(\bQ_p)$ is a hyperspecial open compact subgroup. 
In addition, we assume that $(G,X)$ is of Hodge type. Under these assumptions, $\Sh_K(G,X)$ admits a canonical integral model $\mathscr S_K(G,X)$ over $\mO_{E,(v)}$ by the work of Kisin (\cite{Ki}) and Vasiu (\cite{Vasiu}), where $v$ is a place of $E$ over $p$ with $\FF_v$ its residue field. We denote by $\overline{\mathscr S}$ its mod $p$ fiber, and $\Sh_\mu$ the perfection of $\overline{\mathscr S}$. Kisin's construction also gives rise to a $G$-Shtuka $\mE_1\dashrightarrow \mE_0\cong {^\sigma}\mE_1$ with singularities bounded by $\mu$ over $\Sh_\mu$. Namely,
\[\mE_0:=\mE_{\on{cris}},\]
where $\mE_{\on{cris}}$ is the usual crystalline $G$-torsor on $\Sh_\mu$ (\cite[Corollary 7.1.14]{XZ}).
We thus obtain a morphism of prestacks
\[\loc_p: \Sh_{\mu}\to \Sht_{\mu}^\loc.\]
In the Siegel case, $\loc_p$ is the perfection of the morphism sending an abelian variety to its underlying $p$-divisible group. 

According to the Serre-Tate theory, this morphism (before perfection) is formally \'etale. However, formal \'etaleness is not a good notion for morphisms between perfect schemes, so we will not make use of it. In addition, $\Sht_\mu^\loc$ is not an algebraic stack so it is difficult to work with it directly.  Instead, we will compose it with the morphism from $\Sht_{\mu}^\loc$ to the moduli of restricted local Shtukas. Let $(m,n)$ be a pair of non-negative integers such that $m-n$ is $\mu$-large. We set
\[\loc_p(m,n):\Sh_{\mu}\xrightarrow{\loc_p} \Sht_{\mu}^\loc\xrightarrow{\res_{m,n}} \Sht_{\mu}^{\loc(m,n)}.\]

The following result (\cite[Proposition 7.2.4]{XZ}) allows us to study the special fibers of Shimura varieties via the local Shtukas defined earlier.

\begin{prop}\label{smoothness}
The morphism $\loc_p(m,n)$ is perfectly smooth.
\end{prop}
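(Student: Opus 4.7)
The plan is to work locally on $\Sh_\mu$, reducing the claim to a statement about formal neighborhoods of closed points. Fix a geometric point $\bar x \in \Sh_\mu(\bar k)$ with image $\bar y := \loc_p(m,n)(\bar x) \in \Sht_\mu^{\loc(m,n)}(\bar k)$; I want to exhibit $\loc_p(m,n)$, locally around $\bar x$, as the perfection of a smooth morphism of the expected relative dimension $(m-n)\dim G$.

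The essential input is a $G$-Shtuka version of the Serre--Tate theorem in the Hodge-type setting: the formal neighborhood of $\bar x$ in $\Sh_\mu$ is canonically identified, via the crystalline $G$-torsor $\mE_{\cris}$, with the universal deformation prestack of the local $G$-Shtuka $\loc_p(\bar x)$. In the Siegel case this is the classical Serre--Tate theorem combined with contravariant Dieudonn\'e theory; in general it is extracted from Kisin's construction of the integral canonical model and the compatibility between $\mE_{\cris}$ and the Shtuka formalism (cf.\ \cite[Corollary 7.1.14]{XZ}). Equivalently, $\loc_p$ is ``formally \'etale'' onto its image in the non-algebraic prestack $\Sht_\mu^\loc$.

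Next I would analyze the restriction $\res_{m,n}: \Sht_\mu^\loc \to \Sht_\mu^{\loc(m,n)}$. From the Cartesian description \eqref{E:mS restricted as a Cartesian product} together with Proposition~\ref{P:pushforward is local}, the finite-level morphisms $\res^{m',n'}_{m,n}$ are perfectly smooth of relative dimension $(m'-n')-(m-n)$. This is because lifting $K_m$- or $K_n$-bundles to $K_{m'}$- or $K_{n'}$-bundles is unobstructed: the kernels of $K_{m'} \to K_m$ and $K_{n'} \to K_n$ are perfections of prounipotent group schemes. Passing to $(m',n')=(\infty,\infty)$ shows that $\res_{m,n}$ behaves, on formal neighborhoods of closed points, as a pro-perfectly smooth surjection onto the deformation prestack of the $(m,n)$-restricted local Shtuka at $\bar y$.

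Combining the two ingredients, on the formal neighborhood of $\bar x$ the morphism $\loc_p(m,n)$ factors as the formal isomorphism of the first step (onto deformations of the full local $G$-Shtuka) composed with the perfectly smooth surjection of the second step (onto deformations of its $(m,n)$-restriction), and is therefore itself perfectly smooth. I expect the main obstacle to lie in the first step: the $G$-Shtuka Serre--Tate compatibility in the general Hodge-type setting requires identifying Kisin's crystalline realization with the associated local $G$-Shtuka at the level of deformation functors, which is subtle because $(G,X)$ is only of Hodge type rather than PEL. Once this compatibility is in hand, the remainder is a formal consequence of the construction of $\Sht_\mu^{\loc(m,n)}$ via \eqref{E:mS restricted as a Cartesian product}.
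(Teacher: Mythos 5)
There is a genuine gap, and it is precisely the one the survey flags in the paragraph immediately preceding the proposition: ``According to the Serre--Tate theory, this morphism (before perfection) is formally \'etale. However, formal \'etaleness is not a good notion for morphisms between perfect schemes, so we will not make use of it.'' Your argument is built around the step you describe as ``the formal neighborhood of $\bar x$ in $\Sh_\mu$ is canonically identified \dots with the universal deformation prestack of the local $G$-Shtuka $\loc_p(\bar x)$.'' But $\Sh_\mu$ is by definition the \emph{perfection} of $\overline{\mathscr S}$, and a perfect ring has no nonzero nilpotents: the formal neighborhood of a closed point in a perfect scheme is just the residue field. So the left-hand side of your proposed identification is trivial, while the right-hand side is not, and the Serre--Tate input cannot be applied to $\Sh_\mu$ in the way you want. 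You are implicitly conflating $\Sh_\mu$ with the non-perfected integral model $\overline{\mathscr S}$, which is where Serre--Tate actually lives.

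You might hope to repair this by working on $\overline{\mathscr S}$ and perfecting at the end, but that runs into the second obstruction: you would need a de-perfected model of $\Sht_\mu^{\loc(m,n)}$ and a map $\overline{\mathscr S}\to\Sht_\mu^{\loc(m,n)}$ already at the non-perfect level. In mixed characteristic the target is built out of $W_{\mathcal O,n}(R)$, which is only well behaved for \emph{perfect} $R$; there is no canonical non-perfect model of the restricted Shtuka space, so the factorization you envisage does not exist before perfection. Your second ingredient --- that the truncation maps $\res^{m',n'}_{m,n}$ are perfectly smooth --- is correct and is indeed stated in the text around \eqref{trunc}, but ``pro-perfectly smooth on formal neighborhoods'' is not a workable substitute for a genuine factorization, again because those formal neighborhoods are trivial. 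The paper's actual proof (in \cite[Proposition 7.2.4]{XZ}) deliberately avoids the infinitesimal criterion and proves perfect smoothness by a direct comparison at finite level, built on the Cartesian presentation \eqref{E:mS restricted as a Cartesian product} of $\Sht_\mu^{\loc(m,n)}$ and on Kisin's crystalline $G$-torsor; it does not pass through formal \'etaleness of $\loc_p$.
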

\begin{rmk}
By virtual of Remark \ref{Ex:Sht and GZip}, this proposition refines the well-known perfectly smooth morphism $\Sh_\mu\to G\on{-zip}^\pf_\mu$ (e.g. see \cite{Zha}), which can be used to define the Ekedahl-Oort stratification on Shimura varieties.
\end{rmk}

Now we discuss cohomological correspondences between mod $p$ fibers of Shimura varieties.

Let $(G_1,X_1)$ and $(G_2,X_2)$ be two Shimura data. 
Let $\theta: G_1\otimes\bA_f\simeq G_2\otimes\bA_f$ be an isomorphism. We fix an open compact subgroup $K_i\subset G_i(\bA_f)$ which is sufficiently small such that $\theta K_1=K_2$, and
we assume that $K_{1,p}$ (and therefore $K_{2,p}$) is hyperspecial. 
Let $\underline G_i$ be the integral model of $G_{i,\bQ_p}$ over $\bZ_{p}$ determined by $K_{i,p}$. We have the isomorphism $\theta: \underline G_1\simeq \underline G_2$, which allows us to identify their Langlands dual group $(\hat G,\hat B,\hat T)$, on which the Frobenius $\sigma_p$ acts by an automorphism. We fix an isomorphism $\iota: \bC\simeq \overline\bQ_p$.
Let $\{\mu_i\}$ denote the conjugacy class of Hodge cocharacters of $G_{i\bC}$, determined by $X_i$, which via the isomorphism $\iota$, can be regarded as a dominant character of $(\hat G,\hat B,\hat T)$. 
Let $V_{\mu_1}$ and $V_{\mu_2}$ denote the corresponding highest weight representations of $\hat G$, and let $\widetilde{V_{\mu}}$ and $\widetilde{V_{\mu'}}$ denote the corresponding vector bundles on $\frac{\hat G}{c_{\sigma_p}\hat G}$, as in Example \ref{E: trace of rep}.

Let $E_i\subset \bC$ be the reflex field of $(G_i,X_i)$. Let $v$ be a place of $E=E_1E_2$ lying over $p$, determined by the isomorphism $\iota$. 
Let $\Sh_{\mu_i,K_i}$ denote the mod $p$ fiber of the canonical integral model for $\Sh_{K_i}(G_i,X_i)$, base changed to $k_{v}$. 
We assume that there exists a perfect ind-scheme $\Sh_{\mu_1|\mu_2}$ fitting into the following commutative diagram, with both squares Cartesian,
\begin{equation}
\label{E:global to local cartesian1}
\xymatrix{
\Sh_{\mu_1,K_1} \ar_{\loc_p}[d] & \ar_{\overleftarrow{h}_{\mu_1}}[l] \Sh_{\mu_1|\mu_2} \ar^{\overrightarrow{h}_{\mu_2}}[r] \ar[d] & \Sh_{\mu_2,K_2} \ar^{\loc_p}[d] \\
\Sht_{\mu_1}^\loc & \ar_{\overleftarrow{h}_{\mu_1}^\loc}[l] \Sht_{\mu_1|\mu_2}^{\loc} \ar^{\overrightarrow{h}_{\mu_2}^\loc}[r]& \Sht_{\mu_2}^\loc.
}
\end{equation}
Note that nonemtpiness of $\Sht_{\mu_1\mid\mu_2}^\loc$ is equivalent to the following condition
\begin{equation}
\label{E: exotic p}
\Hom_{\Coh^{\hat G}_{fr}(\hat G\sigma_p)}(\widetilde{V_{\mu_1}},\widetilde{V_{\mu_2}})\neq 0.
\end{equation}
But even $\Sht_{\mu_1\mid\mu_2}^\loc$ is non-empty, there are some restrictions on $(G_i,X_i)$ to guarantee the existence of \eqref{E:global to local cartesian1}.  Namely, one can define $\Sh_{\mu_1\mid\mu_2}$ making the left square in \eqref{E:global to local cartesian1} Cartesian. Then $\Sh_{\mu_1\mid\mu_2}$ is represented by an ind-perfect scheme since $\overleftarrow{h}^\loc_{\mu_1}$ is ind-proper (see Remark \ref{R:fiber of hloc}). But it is in general not true that there exists a morphism from $\Sh_{\mu_1\mid\mu_2}$ to $\Sh_{\mu_2}$ making the right square commutative (and also Cartesian). We do not discuss the general conditions needed for the existence of \eqref{E:global to local cartesian1} here. Instead, we mention the following two extremal cases and refer to \cite{XZ} for more discussions.

\begin{thm}
\label{T: corr. Shimura}
The diagram \eqref{E:global to local cartesian1} exists in the following cases. Assume that both $(G_i,X_i)$ are of Hodge type and $p>2$.
\begin{enumerate}
\item
When $(G_1,X_1)=(G_2,X_2)$ and $\theta$ is the identity map, $\Sh_{\mu_1\mid \mu_2}$ exists and is the perfection of the mod $p$ fiber of a natural integral model of the $p$-power Hecke correspondences of $\Sh_K(G_1,X_1)$.
\item
When there is an inner twist $\Psi: G_1\to G_2$ which is equal to $\Ad_h \circ \theta$ for some $h\in G_2(\overline{\bA}_f)$ at all finite places, and which realizes $G_{2\bR}$ as the compact modulo center inner form of $G_{1\bR}$ (so in particular $\Sh_{\mu_2}$ is a finite set, usually called a Shimura set), then $\Sh_{\mu_1\mid \mu_2}$ exists and can be regarded as the Rapoport-Zink uniformization of the basic locus of $\Sh_{\mu_1}$.
\end{enumerate}
\end{thm}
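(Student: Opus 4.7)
The plan is to construct $\Sh_{\mu_1\mid\mu_2}$ in each case as the perfection of a natural moduli problem and then verify that both squares in \eqref{E:global to local cartesian1} are Cartesian. In both cases, the left square being Cartesian will essentially be built into the definition, while the right square will require an integral Serre-Tate type argument.

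\textbf{Case (1).} Since $(G_1,X_1)=(G_2,X_2)$ and $\theta=\id$, the septuple $(\mu_1,\mu_2,\nu)$ encodes a bounded Hecke modification at $p$. I would define $\mathscr{H}^{\nu}_{\mu_1,\mu_2}$ over $\mathcal{O}_{E_v}$ as the moduli problem sending an $\mathcal{O}_{E_v}$-algebra $R$ to the groupoid of triples $(x_1, x_2, f)$, where $x_i\in\mathscr{S}_{K}(G,X)(R)$ correspond to abelian schemes $A_i/R$ with the usual additional structure (polarization, Kisin--Vasiu $G$-structure, prime-to-$p$ level structure), and $f\colon A_1\dashrightarrow A_2$ is a prime-to-$p$ quasi-isogeny compatible with these structures whose induced map on $p$-divisible groups has relative position bounded by $\nu$. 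Representability and properness over $\mathscr{S}_{K}(G,X)$ on either factor follow from standard arguments (the Hecke correspondence is a finite union of translates of closed subschemes, and boundedness at $p$ makes the relevant moduli spaces of quasi-isogenies projective). I would then set $\Sh_{\mu_1\mid\mu_2}$ to be the perfection of the mod-$p$ fiber, with $\overleftarrow{h}_{\mu_1}$ and $\overrightarrow{h}_{\mu_2}$ the two projections. To see that both squares are Cartesian, the key input is Serre--Tate theory together with Kisin's construction of the crystalline $G$-torsor: given $x_1\in\Sh_{\mu_1}(R)$ and a modification $z$ of its local $G$-Shtuka of relative position $\leq\nu$, the Shtuka data $z$ uniquely determines a $p$-divisible group with $G$-structure over $R$, which by Serre--Tate determines a unique $A_2$ together with a quasi-isogeny $A_1\dashrightarrow A_2$; the Hodge tensors propagate by Kisin's comparison of the de Rham/crystalline realizations with the \'etale realization away from $p$.

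\textbf{Case (2).} Here $G_{2,\bR}$ is compact modulo center, so $\Sh_{\mu_2,K_2}$ is a finite set identified with $G_2(\bQ)\backslash G_2(\bA_f)/K_2$. The nonvanishing condition \eqref{E: exotic p} combined with the existence of the inner twist $\Psi$ forces, via Kottwitz's classification of $\sigma$-conjugacy classes, the existence of a basic element $b\in B(G_{1,\bQ_p})$ whose associated isocrystal has automorphism group $J_b$ inner to $G_{2,\bQ_p}$ (this is precisely the $G_2(\bQ_p)$-component of the inner twist data). I would then appeal to the Rapoport--Zink uniformization theorem for Hodge-type Shimura varieties (Kim, Hamacher--Kim, Howard--Pappas, Shen--Zhang) to identify the basic Newton stratum
\[
\Sh_{\mu_1,K_1}^{b} \;\simeq\; I(\bQ)\,\backslash\,\bigl(\mathcal{M}_b^{\mathrm{perf}}\times G_1(\bA_f^p)/K_1^p\bigr),
\]
where $I$ is the inner form of $G_1$ with $I(\bQ_p)\simeq J_b(\bQ_p)\simeq G_2(\bQ_p)$ and $I(\bR)$ compact mod center, so by Hasse-type rigidity $I\simeq G_2$ over $\bQ$. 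I would then define $\Sh_{\mu_1\mid\mu_2}$ as the perfection of
\[
I(\bQ)\,\backslash\,\bigl(\mathcal{M}_b^{\mathrm{perf}}\times G_1(\bA_f^p)/K_1^p\times G_2(\bA_f)/K_2\bigr),
\]
with $\overleftarrow{h}_{\mu_1}$ the projection forgetting the $G_2$-factor and $\overrightarrow{h}_{\mu_2}$ the projection to $I(\bQ)\backslash G_2(\bA_f)/K_2\cong\Sh_{\mu_2,K_2}$. The left square is Cartesian because the fiber of $\loc_p$ over a basic point is precisely a quotient of the Rapoport--Zink space, matching the fiber of $\overleftarrow{h}_{\mu_1}^{\loc}$ over $b$; the right square is Cartesian because $\Sh_{\mu_2,K_2}$ is discrete and the uniformization identifies its stratification with the $I(\bQ)$-orbits on $G_2(\bA_f)/K_2$.

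\textbf{Main obstacle.} The hard part in case (1) is not the mere existence of the Hecke correspondence but the verification that the right square is Cartesian with all extra structures respected. This requires that Kisin's $G$-structure on the integral model is canonically determined by the local Shtuka in a way compatible with prime-to-$p$ quasi-isogenies, which in turn relies on the crystalline-to-\'etale compatibility of Hodge tensors proved by Kim. In case (2), the principal obstacle is invoking Rapoport--Zink uniformization for general Hodge-type Shimura varieties in a form precise enough to match $J_b$ with $G_{2,\bQ_p}$ and to identify the resulting global inner form $I$ with $G_2$ on the nose; this rests on deep recent work on integral models of Hodge-type Rapoport--Zink spaces and a careful bookkeeping of the Kottwitz set to match the two sides.
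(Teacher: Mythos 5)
The paper you are working from states Theorem \ref{T: corr. Shimura} without giving a proof — it is a survey that refers the reader to \cite{XZ} — so there is no internal proof to compare against word for word. That said, your outline matches what is done in \cite[\S 7.2--7.3]{XZ}: in case (1), construct the moduli space of $p$-power Hecke modifications of abelian schemes with Kisin--Vasiu $G$-structure and use Serre--Tate together with the crystalline $G$-torsor to verify the Cartesian property; in case (2), use Rapoport--Zink uniformization of the basic locus, Kottwitz's description of $B(G,\mu)$ to pin down $b$, and the Hasse principle for inner forms to identify the global uniformizing group $I$ with $G_2$.

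One concrete error: in your definition of $\mathscr{H}^{\nu}_{\mu_1,\mu_2}$ you require $f\colon A_1\dashrightarrow A_2$ to be a \emph{prime-to-$p$} quasi-isogeny ``whose induced map on $p$-divisible groups has relative position bounded by $\nu$''. These two conditions contradict each other: a quasi-isogeny of degree prime to $p$ induces an isomorphism of $p$-divisible groups, so its relative position is automatically zero and the condition ``bounded by $\nu$'' is vacuous. What you want is a quasi-isogeny of $p$-power degree (equivalently, one that is an isomorphism away from $p$, respecting the prime-to-$p$ level structure), with the \emph{induced} map on $p$-divisible groups — which is now a genuine $p$-power quasi-isogeny — having relative position bounded by $\nu$. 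With that fix your description of the moduli problem for the $p$-power Hecke correspondence is the correct one. Beyond this, your sketch is a faithful account of the strategy; the remaining work (propagation of Hodge tensors through the quasi-isogeny via Kim's crystalline--\'etale compatibility in case (1), and the careful matching of $J_b$ with $G_{2,\bQ_p}$ and $I$ with $G_2$ in case (2)) is exactly where \cite{XZ} expends the most effort, and you have correctly flagged these as the substantive obstacles.
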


\begin{rmk}
As just mentioned, if $(G,X)=(G_1,X_1)=(G_2,X_2)$, $\Sh_{\mu\mid\mu}$ is the mod $p$ fiber of a natural integral model of the Hecke correspondence
\begin{equation*}
\label{E: pHecke corr}
\xymatrix@C=-20pt{
&G(\bQ)\backslash X\times G(\bA_f^p)/K^p\times G(\bQ_p)\times^{K_p}G(\bQ_p)/K_p\ar^{\overleftarrow{h}}[ld]\ar_{\overrightarrow{h}}[rd]&\\
G(\bQ)\backslash X\times G(\bA_f)/K && G(\bQ)\backslash X\times G(\bA_f)/K.}
\end{equation*}
If $(G_1,X_1)\neq (G_2,X_2)$ but \eqref{E: exotic p} holds, then $\Sh_{\mu_1\mid \mu_2}$ can be regarded as ``exotic Hecke correspondences" between mod $p$ fibers of \emph{different} Shimura varieties. These correspondences cannot be lifted to characteristic zero, and give a large class of characteristic $p$ cycles on Shimura varieties. (See the next subsection.) 
\end{rmk}

Combining \eqref{E:global to local cartesian1} with Proposition \ref{P:pushforward is local}, we have the following commutative diagram with the left square Cartesian
\[
\xymatrix{
\Sh_{\mu_1,K_1} \ar_{\loc_p(m_1,n_1)}[d] && \ar_{\overleftarrow{h}_{\mu_1}}[ll] \Sh_{\mu_1|\mu_2} \ar^{\overrightarrow{h}_{\mu_2}}[rr] \ar[d] && \Sh_{\mu_2,K_2} \ar^{\loc_p}[d] \\
\Sht_{\mu_1}^{\loc(m_1,n_1)} && \ar_{\overleftarrow{h}_{\mu_1}^{\loc(m_1,n_1)}}[ll] \Sht_{\mu_1|\mu_2}^{\loc(m_1.n_1)} \ar[rr]&& \Sht_{\mu_2}^{\loc(m_2,n_2)}.
}
\]

Now, by pulling back morphisms between $(\Sht_{\mu_1}^{\loc(m_1,n_1)},\mF_1)$ and $(\Sht_{\mu_1}^{\loc(m_2,n_2)},\mF_2)$ in $\on{P}^{\Hk}(\Sht_{\bar k}^\loc)$ along vertical maps in the above diagram in the sense of \eqref{ASS:smooth pullback correspondence2}, we obtain cohomological correspondences between $(\Sh_{\mu_1}, \loc_p(m_1,n_1)^!\mF_1)$ and $(\Sh_{\mu_2},\loc_p(m_2,n_2)^!\mF_2)$. Taking the cohomology with compact support and applying Lemma \ref{AL:pushforward compatible with composition}, we obtain the following theorem.  
\begin{thm}
\label{introT: spectral action}
\begin{enumerate}
\item
Let $(G_i,X_i), i=1,2,3$ be a collection of Shimura data, together with the isomorphism $\theta_{ij}:G_i(\bA_f)\cong G_{j}(\bA_f)$ satisfying the natural cocycle condition. We fix a common level structure $K$. Let $p>2$ be an unramified prime and fix an isomorphism $\iota: \bC\simeq \overline\bQ_p$.
In addition, assume that for each pair $(G_i,X_i),(G_j,X_j)$, the Cartesian diagram \eqref{E:global to local cartesian1} exists. Let $\{\mu_i\}$ denote the conjugacy class of Shimura cocharacters of $(G_i,X_i)$, $V_i=V_{\mu_i}$ the corresponding highest weight representation of $\hat G$, and $\widetilde{V_i}$ the corresponding vector bundle on $\frac{\hat G}{c_{\sigma_p}\hat G}$.  Denote $d_i=\dim \Sh_K(G_i,X_i)$. Let $\mH^p$ denote the prime-to-$p$ Hecke algebra.
Let $v\mid p$ be the place of the compositum of all reflex fields determined by $\iota$.
Then there is a natural action of $\mathbf J$ on $\on{H}^*_c(\Sh_{\mu_i,\overline\bF_v},\Ql(d_i/2))$, and a
canonical $\mathbf J$-equivariant map 
\[S_{\theta_{ij}}:\Hom(\widetilde{V_i},\widetilde{V_j})\to \Hom_{\mathbf J\otimes\mH^p}\big(\on{H}_c^{*+d_i}(\Sh_{\mu_i,\overline\bF_v},\Ql(d_i/2)),\on{H}_c^{*+d_j}(\Sh_{\mu_j,\overline\bF_v},\Ql(d_j/2))\big),\] 
which is compatible with the natural compositions on both sides. In particular, there is a natural action of the algebra $\End(\widetilde{V_{\mu}})$ on $\on{H}_c^{*}(\Sh_{\mu, \overline \FF_v},\Ql)$.

\item When $\Sh_{\mu,\overline \FF_v}$ is a Shimura set, the action of $\mathbf J=\End(\widetilde{V_{\mu}})$ on 
$$\on{H}_c^{*}(\Sh_{\mu, \overline \FF_v},\Ql)\cong C_c(G(\bQ)\backslash G(\bA_f)/K, \Ql)$$ 
coincides with the usual Hecke algebra action under the Satake isomorphism \eqref{Sat isom}.
\end{enumerate}
\end{thm}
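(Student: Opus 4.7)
The plan is to build $S_{\theta_{ij}}$ as the composition of three functorial operations: apply the categorical trace functor $S$ from Theorem~\ref{T: trace of geom Sat} to convert a morphism $\phi\colon \widetilde{V_i}\to\widetilde{V_j}$ in $\Coh^{\hat G}_{fr}(\hat G\sigma_p)$ into a morphism $S(\phi)$ in $\rmP^{\Hk}(\Sht^\loc_{\bar k})$; then smoothly pull it back along the local model maps $\loc_p(m_i,n_i)$ using Proposition~\ref{smoothness}; and finally push forward to a point to obtain a map on cohomology.

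Concretely, one first realises $S(\phi)$ at a finite level: by definition of morphisms in $\rmP^{\Hk}(\Sht^\loc_{\bar k})$, for some choice of $(\nu,m_1,n_1,m_2,n_2)$ satisfying the acceptability conditions of Proposition~\ref{P:pushforward is local}, $S(\phi)$ is represented as a cohomological correspondence supported on $\Sht^{\nu,\loc(m_1,n_1)}_{\mu_i\mid\mu_j}$ between $\IC$-type sheaves attached to $\widetilde{V_i},\widetilde{V_j}$. The Cartesian diagram \eqref{E:global to local cartesian1} combined with the compatibility in Proposition~\ref{P:pushforward is local} allows this correspondence to be smoothly pulled back along $\loc_p(m_i,n_i)$ to a cohomological correspondence on $\Sh_{\mu_1\mid\mu_2}$, which sits between $\loc_p(m_i,n_i)^!(S(\widetilde{V_i}))$ and its counterpart for $j$. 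A standard Verdier-duality argument identifies these pulled-back sheaves, up to the shift $[d_i]$ and Tate twist $(d_i/2)$, with the constant sheaves $\Ql$ on the smooth Shimura varieties $\Sh_{\mu_i,K_i}$. Taking hypercohomology with compact support, i.e.\ pushing the resulting cohomological correspondence to a point, yields the advertised map
\[
S_{\theta_{ij}}(\phi)\colon \on{H}_c^{*+d_i}(\Sh_{\mu_i,\overline\bF_v},\Ql(d_i/2))\to \on{H}_c^{*+d_j}(\Sh_{\mu_j,\overline\bF_v},\Ql(d_j/2)).
\]

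Compatibility with compositions is the principal obstacle and requires a three-layered verification. First, functoriality of $S$ in Theorem~\ref{T: trace of geom Sat} handles composition at the spectral side. Second, because the left square in \eqref{E:global to local cartesian1} is Cartesian and the middle trapezoid in Proposition~\ref{P:pushforward is local}(2) is also Cartesian, smooth pullback of cohomological correspondences along the local model commutes with composition (this is the affine counterpart of the argument used in Proposition~\ref{P:S=T finite}). Third, pushforward to a point commutes with composition by the standard compatibility of proper pushforward with composition of correspondences. Compatibility with $\mathcal{H}^p$ is immediate since the entire construction is local at $p$ and prime-to-$p$ Hecke operators, lifted from characteristic zero, commute with the smooth pullback along $\loc_p$. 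Compatibility with the $\mathbf J$-action, which itself is defined by applying $S_\theta$ with $i=j$ to the action of $\mathbf J=\End(\widetilde{\mO})$ on $\widetilde{V_i}$ via the Tannakian/monoidal structure, follows from the compositional compatibility just established.

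For part~(2), when $\Sh_{\mu,\overline\bF_v}$ is a Shimura set, $\loc_p$ factors so that pulling back the $S$-operator $S(\phi)$ reduces to a finite-groupoid calculation, exactly analogous to the finite Hecke situation treated in Proposition~\ref{P:S=T finite}. One applies the Braverman--Varshavsky trace formula to identify the pulled-back correspondence on the Shimura set with the characteristic function of the corresponding double coset in $G(\bQ_p)$, and compares with the endomorphism $\End_{\rmP^\Hk(\Sht^\loc_{\bar k})}(\widetilde{\delta_e})\cong H_G\otimes\Ql$ from Proposition~\ref{P: endo of unit}. The final claim of Theorem~\ref{T: trace of geom Sat}, identifying $S$ on endomorphisms of $\widetilde{\mO}$ with the classical Satake isomorphism, then matches the resulting $\mathbf J$-action on $C_c(G(\bQ)\backslash G(\bA_f)/K,\Ql)$ with the usual spherical Hecke action. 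The most delicate bookkeeping here is tracking half Tate twists and the sign/degree conventions through the chain of pullbacks and pushforwards, so that the final identification is strictly the Satake map and not a twist of it.
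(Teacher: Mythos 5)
Your proposal is essentially the same as the paper's construction: apply the functor $S$ from Theorem~\ref{T: trace of geom Sat} to land in $\rmP^{\Hk}(\Sht^\loc_{\bar k})$, pull back along the perfectly smooth local model maps $\loc_p(m,n)$ using the Cartesian structure of \eqref{E:global to local cartesian1} together with Proposition~\ref{P:pushforward is local}, and push forward to a point, with compatibility with composition supplied by Lemma~\ref{AL:pullback compatible with composition} and Lemma~\ref{AL:pushforward compatible with composition}. You spell out the three-layer compatibility check more explicitly than the paper's (deliberately terse, survey-level) exposition, and your identification of part~(2) as an affine analogue of Proposition~\ref{P:S=T finite} together with the last clause of Theorem~\ref{T: trace of geom Sat} is exactly how the paper (and \cite{XZ}) proceed.
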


\begin{rmk}
(1) The above theorem in particular gives some Jacquet-Langlands transfer in geometric way, as first studied by David Helm (\cite{He}) in some special cases. 

(2) The action of $\mathbf J\subset \End(\widetilde{V_{\mu}})$ on $\on{H}_c^{*}(\Sh_{\mu},\Ql)$ in the theorem is the Shimura variety analogue of V. Lafforgue's $S$-operators. However, the action of the larger algebra  $\End(\widetilde{V_{\mu}})$ on $\on{H}_c^{*}(\Sh_{\mu},\Ql)$ is new, even in the function field case (where instead of Shimura varieties one considers the moduli spaces of shtukas). 
The following conjecture is the analogue of V. Lafforgue's $S=T$ theorem.
\begin{conjecture}
\label{introconj: S=T}
Under the Satake isomorphism,
the action of $\mathbf J$ in the above theorem coincides with the usual Hecke algebra action on $\on{H}_c^{*}(\Sh_{\mu, \overline \FF_v},\Ql)$ under the Satake isomorphism \eqref{Sat isom}. 
\end{conjecture}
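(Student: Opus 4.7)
The plan is to lift the proof of Proposition \ref{P:S=T finite} from the finite to the global Shimura-variety setting, with the finite flag variety $G/B$ replaced by $\Sh_{\mu,\overline{\FF_v}}$ and the finite Hecke category replaced by $\on{P}^{\Hk}(\Sht^\loc_{\bar k})$, the bridge between them being $\loc_p$ and $\loc_p^!$. The first reduction is to the generating case: by Theorem \ref{T: trace of geom Sat}, the map $S:\mathbf J\to\End_{\on{P}^\Hk(\Sht^\loc_{\bar k})}(\widetilde{\delta_e})\cong H_G\otimes\Ql$ is already known to coincide with the classical Satake isomorphism \eqref{Sat isom}. Hence it suffices to prove that, for an arbitrary endomorphism $\phi\in\End(\widetilde{\delta_e})$ represented by a cohomological correspondence supported on some $\Sht^\loc_{\mu_0|\mu_0}$, the induced endomorphism $\loc_p^!\phi$ of $\on{H}^{*+d}_c(\Sh_{\mu,\overline{\FF_v}},\Ql)(d/2)$ agrees with the classical prime-to-$p$-commuting Hecke operator attached to $\phi$ under the Satake isomorphism.

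The central device will be a three-row diagram analogous to \eqref{E: cal coh corr to local}. The top row is the classical $p$-adic Hecke correspondence on the Shimura variety
\[
\Sh_{\mu,K}\longleftarrow \Sh_{\mu,K^p(K_p\cap gK_pg^{-1})}\longrightarrow \Sh_{\mu,K},
\]
which carries the classical Hecke operator $T_\phi$ and is the global analog of $(G/B)^2\leftarrow G/B\xrightarrow{\on{sw}(\id\times\sigma)}(G/B)^2$. The bottom row is $\Sht^\loc_{\mu}\leftarrow\Sht^\loc_{\mu|\mu}\rightarrow\Sht^\loc_{\mu}$, over which $\phi$ naturally lives and through which $\loc_p^!\phi$ is defined. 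The middle row must be a Lang-style common refinement: a moduli space $\mW_\mu$ over $\overline{\FF_v}$ classifying $\overline{\FF_v}$-points of $\Sh_{\mu,K}$ together with enough framing of the crystalline $G$-torsor $\mE_{\cris}$ to realize both the classical Hecke correspondence and the local-Shtuka correspondence inside it. Concretely, $\mW_\mu$ should be a pro-$K_p$-torsor on $\Sh_{\mu,\overline{\FF_v}}$ parametrizing pairs $(x,\epsilon)$ with $\epsilon:\mE_{\cris,x}\simeq\mE^0$, so that the Frobenius on $\mE_{\cris,x}$ becomes an element of $LG(\overline{\FF_v})$, and the classical Hecke action at $p$ is expressed as a ``Frobenius-twisted diagonal'' in exactly the form $\{g\in LG:\sigma(g)^{-1}g\in\mu(\varpi)K_p\}$ that appears in the proof of Proposition \ref{P:S=T finite}.

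Given $\mW_\mu$ with perfectly smooth projections to both the top and bottom rows, the compatibility of smooth pullback with composition of cohomological correspondences (the affine analog of Lemma \ref{AL:pullback compatible with composition}) reduces the theorem to a factorwise check of the three pieces $\on{coev}$, partial Frobenius, and $\on{ev}$ that build the $S$-operator, against the corresponding pieces of the classical Hecke correspondence. The $\on{coev}$ and $\on{ev}$ comparisons reduce, exactly as in the finite case, to the base-changeability of the relevant Cartesian squares on $\mW_\mu$, while the partial-Frobenius comparison comes from the tautological identification between the partial-Frobenius $\pFr$ on $\Sht^\loc_{\mu|\mu}$ and the Hecke-and-Frobenius move on the classical side via $\mE_{\cris}\cong{^\sigma}\mE_{\cris}^{\mathrm{Frob}}$. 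The trace formula of Braverman--Varshavsky, as used in Proposition \ref{P:S=T finite}, then matches the function on $K\backslash G(F)/K$ determined by $\phi$ with the trace-of-Frobenius function of the classical Hecke operator on $\mW_\mu$, completing the argument. As a sanity check, specializing $\Sh_{\mu,K}$ to a Shimura set recovers part (2) of Theorem \ref{introT: spectral action}.

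The principal obstacle, and the reason this remains a conjecture, is the construction and analysis of $\mW_\mu$ as a useable geometric object: one needs a genuinely Cartesian refinement of diagram \eqref{E:global to local cartesian1} that factors the classical $p$-adic Hecke correspondence through $\loc_p$, together with an extension of Proposition \ref{smoothness} asserting that $\loc_p$ remains perfectly smooth after base change along $\Sht^\loc_{\mu|\mu}\to\Sht^\loc_\mu$. For the non-basic part of the Shimura variety this requires controlling the crystalline Hecke correspondence integrally across all Newton strata, which goes strictly beyond the Serre--Tate deformation theory used to prove Proposition \ref{smoothness}; indeed this is precisely the Shimura-variety analog of the genuinely hard input in V.~Lafforgue's $S=T$ theorem. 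A promising variant would bypass $\mW_\mu$ by proving the equality of $S$-operator and Hecke operator after pairing with automorphic forms, using the description of $\on{H}^*_c(\Sh_{\mu,\overline{\FF_v}},\Ql)$ in terms of automorphic representations and matching both sides to the local Satake parameters; but to obtain an identity on the integral cohomology as stated in Conjecture \ref{introconj: S=T} one ultimately appears to need the geometric comparison sketched above.
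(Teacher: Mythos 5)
This statement is labelled a \emph{conjecture} in the paper, and the paper does not prove it: the surrounding remark explicitly calls it ``the analogue of V.\ Lafforgue's $S=T$ theorem,'' gives a consequence (the congruence relation conjecture via \cite[\S 6.3]{XZ2}), and otherwise leaves it open. So there is no paper proof for your proposal to be compared against.

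As a strategy sketch your proposal is sound and, I think, is essentially the approach one would try: run the three-row pullback argument of Proposition~\ref{P:S=T finite} with the top row being the mod $p$ fiber of the classical $p$-power Hecke correspondence (this is exactly the $\Sh_{\mu|\mu}$ of Theorem~\ref{T: corr. Shimura}(1)), the bottom row being the local shtuka Hecke stack, and a common smooth refinement $\mW_\mu$ in the middle arising from framings of $\mE_{\cris}$, using Lemma~\ref{AL:pullback compatible with composition} to reduce to a factorwise check and a Braverman--Varshavsky-type trace identity. You also correctly locate the gap, which is the real reason this is a conjecture rather than a theorem: Theorem~\ref{T: corr. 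Shimura}(1) identifies the supports of the two correspondences, but matching the cohomological correspondences themselves requires a Lang-type refinement $\mW_\mu$ of diagram~\eqref{E:global to local cartesian1} with perfectly smooth equidimensional maps to both rows and a base-changeable factorization of the coevaluation/evaluation/partial-Frobenius pieces; Proposition~\ref{smoothness} gives perfect smoothness of $\loc_p(m,n)$ but does not a priori control this refinement across all Newton strata, nor does the cited Serre--Tate argument. Two small cautions worth recording: (i) the middle row as you define it is a pro-$K_p$-torsor, so one must work at finite $(m,n)$-level as in \S 4 of the paper and verify compatibility with the restriction maps $\res^{m',n'}_{m,n}$, otherwise the smooth pullback of correspondences is not available; and (ii) the relevant Frobenius on $\mE_{\cris}$ is the crystalline one, and the identification with the ``Frobenius-twisted diagonal'' locus $\{g\in LG:\sigma(g)^{-1}g\in K_p\mu(\varpi)K_p\}$ is not purely tautological but must be checked against Kisin's construction of the $G$-shtuka. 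Your proposal is a plausible roadmap and correctly reproduces the known Shimura-set case (Theorem~\ref{introT: spectral action}(2)) as a degenerate instance, but it is not, and does not claim to be, a proof of the conjecture.
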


(3) By \cite[\S 6.3]{XZ2}, the above $S=T$ conjecture implies the congruence relation conjecture (known as the Blasius-Rogawski conjecture) for Shimura varieties of Hodge type.
 \end{rmk}

\begin{rmk}
\label{R: coh of Shimura}
The following is a (heuristic) conjectural description of the cohomology of $\overline{\scrS}$, inspired by Drinfeld's interpretation of V. Lafforgue's work:
There exists a graded $\Ql$-vector space $A^*$ (depending only on $G\otimes\bA_f$), equipped with an $\mH_K$-action, and a commuting action of $\Ql[\hat{G}\sigma_p]$, such that 
$$\on{H}_c^{*+d}(\overline{\scrS}_{\overline\bF_v}, \Ql(d/2)) = (A^*\otimes V_{\mu})^{\hat G}.$$
It is clear that Theorem \ref{introT: spectral action} is implied by this conjecture. In fact, there is similar conjectural description of the cohomology of $\Sh_K(G,X)$, which we plan to discuss in details in another ocassion. 
\end{rmk}

\subsection{Tate cycles on some mod $p$ fibers of Shimura varieties}
We apply the above machinery to verify ``generic" cases of Tate conjecture for the mod $p$ fibers of many Shimura varieties.

Let $(G,X,K)$ be as in the previous subsection and let $p>2$ be an unramified prime. 
Let $(\hat G, \hat B,\hat T)$ be the Langlands dual group of $G_{\bQ_p}$, equipped with a Borel and a maximal torus as in Remark \ref{R: birth of the dual group}. We fix an isomorphism $\iota: \overline\bQ_p\simeq \bC$ as before, and regard the Shimura cocharacter $\mu$ as a dominant weight of $(\hat G,\hat B,\hat T)$. Let $v\mid p$ be the place of $E$ determined by $\iota$ and let $\overline{\scrS}$ be the mod $p$ fiber of the canonical integral model of $\Sh_K(G,X)$ over $\mO_{E,(v)}$ as before.

For $\la\in\xch(\hat T)$ and a representation $V$ of $\hat G$, let $V(\la)$ denote the $\la$-weight subspace of $V$ (with respect to $\hat T$).
We define the following lattice
\[\Lambda^\Tate_p=\Big\{\la\in \xch(\hat T)\; \Big|\; \sum_{i=0}^{m-1} \sigma_p^i(\la)\in \xcoch(Z_G)\Big\}\subset \xch(\hat T).\]
Here, $Z_G$ denotes the center of $G$, and $\xcoch(Z_G)$ denotes the coweight lattice of $Z_G$, which is a natural subgroup of $\xch(\hat T)$. For a representation $V$ of $\hat G$, we define the following space
\[V^\Tate=\bigoplus_{\la\in\Lambda^\Tate_p} V(\la).\]

We are in particular interested in the condition $V^\Tate_\mu\neq 0$, where $\mu$ is the Shimura cocharacter of $G$.
As explained in the introduction of \cite{XZ}, under the conjectural description of the cohomology of the Shimura varieties given in Remark \ref{R: coh of Shimura}, for a Hecke module $\pi_f$ whose Satake parameter at $p$ is generic enough, certain multiple $a(\pi_f)$ of
the dimension of this vector space should be equal to the dimension of the space of Tate classes in the $\pi_f$-component of the middle dimensional compactly-supported cohomology of $\overline{\scrS}$. 
In addition, this space is usually large. For example, in the case $G$ is an odd unitary (similitude) group of signature $(i,n-i)$ over a quadratic imaginary field, the dimension of this space at an inert prime is $\begin{pmatrix}\frac{n+1}{2}\\ i\end{pmatrix}$.

We need one more notation. For a (not necessarily irreducible) algebraic variety $Z$ of dimension $d$ over an algebraically closed field, let ${\rm H}^{\rm BM}_{2d}(Z)(-d)$ denote the $(-d)$-Tate twist of the top degree Borel--Moore homology, which is the vector space spanned by the irreducible components of $Z$. Now let $X$ be
a smooth variety of dimension $d+r$ defined over a finite field $\FF_q$, and let $Z \subseteq X_{\overline \FF_q}$ be a (not necessarily irreducible) projective subvariety of dimension $d$. There is the cycle class map
\[\on{cl}: {\rm H}^{\rm BM}_{2d}(Z)(-d)\to \bigcup_{j\geq 1}{\rm H}_{\et,c}^{2r}(X_{\overline \FF_q}, \bQ_\ell(r))^{\sigma_q^j}.\]

Our main theorem of \cite{XZ} is as follows. There is a natural Newton stratification on $\overline{\mathscr S}$. 
Let $\overline{\mathscr S}_b$ denote the basic Newton stratum, on which $\mH_K$ acts by (cohomological) correspondences (given by $S$-operators constructed in the previous subsection.)
\begin{thm}
\label{T:main theorem}
Assume that $(G,X)$ is of Hodge type such that the center $Z_G$ of $G$ is connected. Let $K\subset G(\bA_f)$ be an open compact subgroup.  Write $2d=\dim\Sh_K(G,X)$.
Let $p>2$ be an unramified prime for $(G,X,K)$ such that $r:=V_{\mu^*}^\Tate\neq 0$. 
Then:
\begin{enumerate}
\item The basic Newton stratum $\overline{\mathscr S}_b$ of $\overline{\scrS}$ is pure of dimension $d$. In particular, $d$ is always an integer. In addition,
there is an $\mH_K$-equivariant isomorphism
$${\rm H}^{\rm BM}_{2d}(\overline{\mathscr S}_{b, \overline \FF_v})(-d)\cong C(G'(\bQ)\backslash G'(\bA_f)/K)^{\oplus r}.$$
Here $G'$ is an inner form of $G$ that is isomorphic to $G$ over $\bA_f$ and is compact modulo center at infinity, and $C(G'(\bQ)\backslash G'(\bA_f)/K)$ denotes the space of functions on the finite set $G'(\bQ)\backslash G'(\bA_f)/K$.

\item 

Let $\pi_f$ be an irreducible module of $\calH_K$, and let 
$${\rm H}^{\rm BM}_{2d}(\overline{\mathscr S}_{b, \overline \FF_v})[\pi_f]=\Hom_{\mH_K}(\pi_f,{\rm H}^{\rm BM}_{2d}(\overline{\mathscr S}_{b, \overline \FF_v})(-d))\otimes \pi_f$$ be the $\pi_f$-isotypical component. Then the cycle class map
\[\on{cl}:{\rm H}^{\rm BM}_{2d}(\overline{\mathscr S}_{b, \overline \FF_v})(-d)\to {\rm H}^{2d}_{\et,c}\big(\overline{\mathscr S}_{\overline \FF_v},\overline\bQ_\ell(d)\big)\]
restricted to ${\rm H}^{\rm BM}_{2d}(\overline{\mathscr S}_{b, \overline \FF_v})[\pi_f]$ is injective if the Satake parameter of $\pi_{f,p}$ (the component of $\pi_f$ at $p$) is $V_\mu$-general.

\item Assume that $\Sh_K(G,X)$ is a quaternionic Shimura variety or a Kottwitz arithmetic variety. Then the $\pi_f$-isotypical component of $\on{cl}$ is surjective to $T^d(\pi,\Ql)\otimes\pi_f$ if the Satake parameter of $\pi_{f,p}$ is strongly $V_\mu$-general. In particular, the Tate conjecture holds for these $\pi_f$.
\end{enumerate}
\end{thm}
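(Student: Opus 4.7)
The plan is to use Theorem \ref{T: corr. Shimura}(2) to produce Rapoport--Zink uniformization cycles on the basic Newton stratum of $\overline{\scrS}$ and then control them via the spectral action of $\mathbf J$ and $\End(\widetilde{V_\mu})$ given by Theorem \ref{introT: spectral action}. The key geometric input is that $V_{\mu^*}^{\mathrm{Tate}}$ parametrizes the top-dimensional Mirkovi\'c--Vilonen cycles whose images on $\Sh_\mu$ land in the basic stratum. More precisely, let $(G',X')$ be the inner form of $(G,X)$ which is compact modulo center at infinity and agrees with $G$ at all finite places, so that $\Sh_{\mu_2,K}(G',X')$ is a Shimura set. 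Theorem \ref{T: corr. Shimura}(2) then produces a commutative diagram
\[
\xymatrix@C=18pt{
\overline{\scrS}_{b,\overline\FF_v}\ar[d]_{\loc_p}&\ar[l]_-{\overleftarrow h} \Sh_{\mu|\mu_2}\ar[r]^-{\overrightarrow h}\ar[d]&\Sh_{\mu_2,K}\ar[d]^{\loc_p}\\
\Sht^{\loc}_{\mu,b} &\ar[l]\Sht^{\loc}_{\mu|\mu_2}\ar[r]&\Sht^{\loc}_{\mu_2}
}
\]
with both squares Cartesian (up to base change of the left square to the basic locus). Since $\loc_p$ is perfectly smooth (Proposition \ref{smoothness}) and $\Sh_{\mu_2}$ is $0$-dimensional, dimension count reduces Part (1) to computing the dimension of the basic affine Deligne--Lusztig variety appearing as fibers of the local left arrow. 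By Remark \ref{R:fiber of hloc} these fibers are closed subschemes of the affine Grassmannian whose top-dimensional irreducible components are in bijection with a basis of $V_{\mu^*}^{\mathrm{Tate}}$ (the MV-type construction carried out in \cite[\S 4]{XZ}); hence $\overline{\scrS}_b$ is pure of dimension $d$, and the identification ${\rm H}^{\rm BM}_{2d}(\overline{\scrS}_{b,\overline\FF_v})(-d)\cong C(G'(\bQ)\backslash G'(\bA_f)/K)^{\oplus r}$ is obtained by pushing forward along $\overrightarrow h$ and using that $\Sh_{\mu_2,K}$ uniformizes the set of connected components of $G'(\bQ)\backslash G'(\bA_f)/K$. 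The $\mH_K$-equivariance is built into the construction since away from $p$ the diagram is a genuine Hecke correspondence.

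For Part (2), I would combine the cycle class of the basic stratum with the operators from Theorem \ref{introT: spectral action}. The cycle class map factors as
\[
{\rm H}^{\rm BM}_{2d}(\overline{\scrS}_b)(-d)\xrightarrow{\alpha}{\rm H}^{2d}_{\et,c}(\overline{\scrS}_{\overline\FF_v},\Ql(d)),
\]
and by Part (1) the left side is the $V_{\mu^*}^{\mathrm{Tate}}$-isotypic piece of a $\mathbf J\otimes\mH^p$-module. The action of the larger algebra $\End(\widetilde{V_\mu})=\Hom(\widetilde{V_\mu},\widetilde{V_\mu})\hookrightarrow \Coh_{fr}^{\hat G}(\hat G\sigma_p)$ on both sides through $S$-operators reduces the injectivity of $\alpha$ on the $\pi_f$-component to a statement in representation theory of $\hat G\sigma_p$: the Satake parameter $s_{\pi_{f,p}}\in \hat G\sigma_p$ being $V_\mu$-general means its action on $V_\mu$ has distinct Frobenius eigenvalues, which in turn makes the pairing between the $V_{\mu^*}^{\mathrm{Tate}}$-weight spaces and the weight $0$ part of $V_\mu\otimes V_{\mu^*}$ non-degenerate after specialization at $s_{\pi_{f,p}}$. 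This will force the $\pi_f$-component of $\alpha$ to be injective.

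Part (3) requires genuine automorphic input: for quaternionic Shimura varieties one has the work of Tian--Xiao and Nekov\'a\v r--Scholl computing cohomology of Goren--Oort strata, and for Kottwitz arithmetic varieties the work of Kottwitz and Morel identifies ${\rm H}^{2d}_{\et,c}(\overline{\scrS}_{\overline\FF_v},\Ql(d))[\pi_f]$ with a specific Galois representation coming from the relevant $L$-packet. Using strong $V_\mu$-genericity to control both the semisimplification of the local Galois representation at $p$ and its comparison with the Satake parameter, one matches the dimension of the space of Frobenius-fixed classes in ${\rm H}^{2d}_{\et,c}[\pi_f]$ with the dimension $r\cdot\dim\pi_f^K$ computed from Part (1), and injectivity from Part (2) upgrades to bijectivity. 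The hardest step will be this final dimension matching, because it requires delicate analysis of the semisimple traces of Frobenius (essentially Kottwitz's conjecture for the basic stratum) and of the weights on the remaining cohomology, which is exactly why the argument is restricted to the cases where such computations are available rather than being pushed through for every Hodge type Shimura variety.
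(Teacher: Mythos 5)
Your outline matches the paper's high-level strategy: Rapoport--Zink uniformization plus the analysis of irreducible components of basic affine Deligne--Lusztig varieties from \cite[\S 4]{XZ} for Part (1), the spectral action and an intersection-number computation for Part (2), and trace-formula comparison for Part (3). That said, there are concrete inaccuracies in Part (2) that would matter if you tried to carry the argument through.

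The central technical step the paper uses in Part (2) is not a vague non-degeneracy of a weight-space pairing, but a genuine computation of the $r\times r$ intersection matrix of $d$-dimensional cycles in $\overline{\scrS}_b$, with entries in the spherical Hecke algebra. This matrix is realized as the composition
$\Hom_{\Coh^{\hat G}_{fr}(\hat G\sigma_p)}(\widetilde{V_\tau},\widetilde{V_\mu})\otimes \Hom_{\Coh^{\hat G}_{fr}(\hat G\sigma_p)}(\widetilde{V_\mu},\widetilde{V_\tau})\to \mathbf J$, where the two $\Hom$-spaces are free $\mathbf J$-modules of rank $r$ (not the vector spaces $V^{\Tate}_{\mu^*}$ themselves), and the determinant of the resulting $r\times r$-matrix with entries in $\mathbf J$ is the quantity that must be non-vanishing at the Satake parameter. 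That determinant is computed in \cite{XZ2} via a Chevalley-type restriction theorem for vector-valued functions; nothing in your sketch substitutes for this. More importantly, your characterization of ``$V_\mu$-general'' as ``acts with distinct eigenvalues on $V_\mu$'' conflicts with the paper's own remark that regular semisimple elements are $V$-general but not conversely: the actual condition, defined in \cite[Definition 1.4.2]{XZ}, is exactly the non-vanishing of the determinant above, and is in general weaker than having distinct eigenvalues. Using the wrong condition would change the statement of the theorem.

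For Part (3), the paper's proof compares the Lefschetz trace formula on $\overline{\scrS}$ with the Arthur--Selberg trace formula for the compact-mod-center inner form $G'$; your description in terms of ``dimension matching'' and Kottwitz's conjecture for the basic locus is heading in the right direction, but you should name the second trace formula explicitly since the comparison with $G'$ is what produces the surjectivity.
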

\begin{rmk}
(1) The assumption that $Z_G$ is connected is not essential, but simplifies the formulation.

(2) For a representation $V$ of $\hat G$, the definitions of ``$V$-general" and ``strongly $V$-general" Satake parameters were given in \cite[Definition 1.4.2]{XZ}. Regular semisimple elements in $\hat G\sigma_p$ are always $V$-general, but not the converse. See \cite[Remark 1.4.3]{XZ}.

(3) Some special cases of the theorem were originally proved in \cite{HTX,TX}.

\end{rmk}

The proof of this theorem relies on several different ingredients. Via the Rapoport-Zink uniformization of the basic locus of a Shimura variety, Part (1) can be reduced a question about irreducible components of certain affine Deligne-Lusztig varieties, which was studied in \cite[\S 4]{XZ}.
The most difficult part is (2), which we proved by calculating the intersection numbers among all $d$-dimensional cycles in $\overline\scrS_b$. By Part (1), the intersection numbers of $d$-dimensional cycles in $\overline{\scrS}_b$ can be encoded in an $r\times r$-matrix with entries in the spherical Hecke algebra at $p$. In general, it seems hopeless to calculate this matrix directly and explicitly. 
However, similar to the toy model explained in Example \ref{intersection by geom Sat}, this matrix can be understood as the composition of certain morphisms in $\Coh^{\hat G}_{fr}(\hat G\sigma_p)$. Namely, first we realize $G'(\bQ)\backslash G'(\bA)/K$ as a Shimura set with $\tau$ its Shimura cocharacter\footnote{There is a subtlety regarding the choice of $\tau$ which we ignore here. See \cite[Remark 7.4.4]{XZ}.}.
Then using Theorem \ref{introT: spectral action}, this matrix can be calculated as
\[ \Hom_{\Coh^{\hat G}_{fr}(\hat G\sigma_p)}(\widetilde{V_{\tau}},\widetilde{V_{\mu}})\otimes  \Hom_{\Coh^{\hat G}_{fr}(\hat G\sigma_p)}(\widetilde{V_{\mu}},\widetilde{V_{\tau}})\to  \Hom_{\Coh^{\hat G}_{fr}(\hat G\sigma_p)}(\widetilde{V_{\tau}},\widetilde{V_{\tau}})=\mathbf{J}.\]
To proof Part (2), one needs to determine when this pairing is non-degenerate. This itself is an interesting question in representation theory, whose solution relies on the study of the Chevellay's restriction map for vector-valued functions. The determinant of this matrix was calculated in \cite{XZ2}, giving (2). Part (3) was proved by comparison two trace formulas, the Lefschetz trace formula for $G$ and the Arthur-Selberg trace formula for $G'$. We refer to \cite[\S 2]{XZ} for details.

\begin{ex}
Let $G=\on{GU}(1,2r)$ be the unitary similitude group of $(2r+1)$-variables associated to an imaginary quadratic extension $E/\bQ$, whose signature is $(1,2r)$ at infinity. It is equipped with a standard Shimura datum. We fix a level $K\subset G(\bA_f)$ and let $\Sh_K(G,X)$ be the corresponding Shimura variety. In particular if $r=1$, this is the Picard modular surface. Let $p$ be an unramified inert prime.  In this case $\overline{\scrS}_b$ is a union of certain Deligne-Lusztig varieties, parametrized by $G'(\bQ)\backslash G'(\bA_f)/K$, where $G'=\on{GU}(0,2r+1)$ that is isomorphic to $G$ at all finite places. The intersection pattern of these cycles inside  $\overline{\scrS}_b$ were given in \cite{VW}. However, the intersection numbers between these cycles are much harder to compute. In fact we do not know how to compute them directly for general $r$, except applying Theorem \ref{introT: spectral action} to this case. (The case $r=1$ can be handled directly.)

We have $\hat G=\GL_{2r+1}\times \bG_m$ on which $\sigma_p$ acts as $(A,c)\mapsto (J(A^T)^{-1}J,c)$, where $J$ is the anti-diagonal matrix with all entries along the anti-diagonal being $1$. The representation $V_\mu$ is the standard representation of $\GL_{2r+1}$ on which $\bG_m$ acts via homotheties. One checks that $\dim V_\mu^{\Tate}=1$ (which is consistent with the above mentioned parameterization of irreducible components of $\overline{\scrS}_b$ by $G'(\bQ)\backslash G'(\bA_f)/K$). 
We can choose $\tau=((1,\ldots,1),1)$, and think $G'(\bQ)\backslash G'(\bA_f)/K$ as a Shimura set with Shimura cocharacter $\tau$. 
Here we identify the weight lattice of $\hat G$ as $\bZ^{2r+1}\times\bZ$ as usual. Then $\Hom_{\Coh^{\hat G}_{fr}(\hat G\sigma_p)}(\widetilde{V_{\tau}},\widetilde{V_{\mu}})$ is a free rank one module over $\mathbf J$. A generator $\mathbf a_{\on{in}}$ induces a homomorphism $\mathbf J\times\mH^p$-equivariant
$$S(\mathbf a_{\on{in}}): C(G'(\bQ)\backslash G'(\bA_f)/K)\to \on{H}_c^{2r}(\overline{\scrS}_{\overline{\bF}_p},\Ql(r)),$$  
realizing the cycle class map of $\overline\scrS_b$ (up to a multiple). The module  $\Hom_{\Coh^{\hat G}_{fr}(\hat G\sigma_p)}(\widetilde{V_{\mu}},\widetilde{V_{\tau}})$ is also free of rank one over $\mathbf J$. For a chosen generator $\mathbf a_\out$, the composition
\[S(\bba_\out)\circ S(\bba_{\on{in}})=S(\bba_\out\circ\bba_{\on{in}})\]
calculates the intersection matrix of those cycles from the irreducible components of $\overline\scrS_b$. 

The element $h:=\bba_\out\circ\bba_{\on{in}} \in \mathbf J$ was explicitly computed in \cite[Example 6.4.2]{XZ2} (up to obvious modification). Under the Satake isomorphism $\mathbf J\cong H_{G_{\bQ_p}}\otimes \Ql$, it can also be written explicitly as a combination of Hecke operators (\cite{XZ3})
\begin{equation}
\label{E:Hecke operator in Tpj}
h=p^{r(r+1)}\sum_{i=0}^{r} (-1)^i(2i+1)p^{(i-r)(r+i+1)}\sum_{j=0}^{r-i} \begin{bmatrix} 2r+1-2j \\ r-i- j\end{bmatrix}_{v=-p}T_{p,j}.
\end{equation}
Here, $T_{p,j}=1_{K_p\la_j(p)K_p}$, with $\la_i=(1^i,0^{2r-2i+1},(-1)^i,0)$, and $\begin{bmatrix} n \\ m\end{bmatrix}_v$ is the
 $v$-analogue of the binomial coefficient
\[
[0]_v=1,\quad [n]_v=\frac{v^n-1}{v-1},\quad [n]_v!=[n]_v[n-1]_v\cdots[1]_v,\quad \begin{bmatrix} n \\ m\end{bmatrix}_v=\frac{[n]_v!}{[n-m]_v![m]_v!}.
\]
In other words, the intersection matrix of cycles in $\overline\scrS_b$ in this case is calculated by the Hecke operator \eqref{E:Hecke operator in Tpj}. We refer to \cite{XZ3} for details and further discussions.
\end{ex}

\section{Appendix: A category formed by correspondences}
Let $C\rightrightarrows X$ be a groupoid of spaces.
The main goal of the appendix (\S \ref{SS: cat of corr}) is to introduce a category $\on{D}^C(X)$, which captures a large part of information of $\on{D}(Y)$ when $C=X\times_YX$ for a proper morphism $X\to Y$.

We fix a perfect field $k$ and  write $\mathrm{pt}$ for $\Spec k$.
By a stack (resp. a perfect stack) $\mX$, we mean a stack over $\Aff_k$ (resp. $\Aff_k^\pf$) with fpqc topology whose diagonal is representable by an algebraic space (resp. by a perfect algebraic space) and such that there exists smooth (resp. perfectly smooth) surjective map from a scheme (resp. perfect scheme) $X\to \mX$. 
In the sequel, (perfect) stacks of (perfectly) of finite presentation over $k$ are also called spaces. We refer to \cite[Appendix A]{Zh1}, \cite{BS} and \cite[\S A.1]{XZ} for some discussions on the foundations of perfect algebraic geometry.

\subsection{Review of cohomological correspondence}
\label{Sec:cohomological correspondence}
Since we heavily make use of the formalism of cohomological correspondences, we briefly review some facts following \cite[\S A.2]{XZ}. We refer \emph{loc. cit.} for a more thorough treatment.

Let $E$ be a coefficient ring of such that $\cha k$ is invertible in $E$. 
If $X$ is a (perfectly) finitely presented scheme over $k$ and $E$ is finite, let $\on{D}(X)=\on{D}(X,E)$ denote the (unbounded) derived category of \'etale sheaves with $E$-coefficients. For a general space $X$ and possibly more general coefficient ring $E$ (e.g. $E=\Ql$), one can define $\on{D}(X)=\on{D}(X,E)$ via as certain limit (see \cite[\S A.1.15]{XZ} for the precise formulation).
Let $\on{D}^*_c(X)\subset \on{D}(X,E) (*=+,-,b,\emptyset)$ denote its constructible derived categories, and let $\on{P}(X)\subset\on{D}_c^b(X)$ denote the category of perverse sheaves. Let $\omega_X\in \on{D}_c^b(X)$ denote the dualizing sheaf on $X$, and let $\bD\mF=R\underline{\Hom}(\mF,\omega_X)$ denote the Verdier dual. By definition, there is canonical evaluation map
\begin{equation}
\label{E: VD counit}
 \ev_\mF: \Delta^*(\bD\mF\boxtimes\mF)=\bD\mF\otimes \mF\to \omega_X,
\end{equation}
and if $\mF\in\on{D}_c^b(X)$, there is the canonical coevaluation map by taking the Verdier dual
\begin{equation}
\label{E: VD unit}
\coev_\mF: E\to \Delta^!(\mF\boxtimes\bD\mF),\quad
\end{equation}

It is useful to introduce the following notions. 
\begin{dfn}
\label{D: basechangeable}
A commutative square of spaces 
\begin{equation}
\label{E: basechangeable}
\begin{CD}
D@>p>>C\\
@VbVV@VVaV\\
Y@>f>>X
\end{CD}
\end{equation}
is called \emph{base changeable} if the induced morphism $h:D\to C\times_XY$ is representable by (perfectly) proper algebraic spaces.
\end{dfn}

The following lemma is straightforward.
\begin{lem}
\label{L: criterion basechangeable}
\begin{enumerate}
\item A commutative diagram as \eqref{E: basechangeable} is base changeable if
\begin{itemize}
\item either $p$ is representable by (perfectly) proper algebraic spaces and $f$ is separated;
\item or $b$ is representable by (perfectly) proper algebraic spaces and $a$ is separated.
\end{itemize}
\item In the commutative diagram
\begin{equation}
\label{E: two base changeable}
\begin{CD}
E@>q>>D@>p>>C\\
@VcVV@VbVV@VVaV\\
Z@>g>>Y@>f>>X
\end{CD}
\end{equation}
if both inner squares are base changeable, so is the outer rectangle.
\end{enumerate}
\end{lem}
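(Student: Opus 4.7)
The plan is to verify both parts by direct diagram chases with the canonical morphism $h\colon D\to C\times_XY$, using only the standard facts that (perfectly) proper morphisms are stable under composition and base change, and that if $g\circ h$ is proper and $g$ is separated, then $h$ is proper.

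For part (1), I would factor $h$ through one of the two projections out of the fiber product. In the first case, $\mathrm{pr}_C\circ h=p$ is proper by hypothesis, while $\mathrm{pr}_C\colon C\times_XY\to C$ is the base change of $f$ and is therefore separated; the cancellation property then forces $h$ to be proper. The second case is entirely symmetric, with $\mathrm{pr}_Y\circ h=b$ proper and $\mathrm{pr}_Y\colon C\times_XY\to Y$ separated as the base change of $a$.

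For part (2), the canonical morphism attached to the outer rectangle factors as
\[
E\longrightarrow D\times_YZ\longrightarrow C\times_XZ,
\]
where the first arrow is the one provided by the base changeability of the left inner square and is therefore proper, and the second arrow is obtained from the proper morphism $D\to C\times_XY$ (given by the right inner square) by base change along $Z\to Y$, since $(C\times_XY)\times_YZ\cong C\times_XZ$; it is hence also proper. The composition is thus proper, which is exactly the desired base changeability for the outer rectangle.

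The only point to verify is that the displayed factorization really agrees with the canonical map $E\to C\times_XZ$ induced by the outer rectangle, which is a straightforward exercise in the universal property of fiber products. No genuine obstacle arises; the entire argument is formal.
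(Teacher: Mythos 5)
Your argument is correct. The paper itself gives no proof of this lemma, stating only that it is ``straightforward,'' so there is nothing to compare against; your write-up fills that gap cleanly. Both parts use exactly the expected permanence properties: for (1), the cancellation law (a morphism with proper composite and separated target-leg is proper, via the graph factorization through the fiber product, whose two projections are base changes of $f$ and $a$ respectively); for (2), the factorization $E\to D\times_YZ\to C\times_XZ$, where the second arrow is the base change of $D\to C\times_XY$ along $Z\to Y$ under the canonical identification $(C\times_XY)\times_YZ\cong C\times_XZ$, and properness passes through composition. The only mild point worth flagging, which you implicitly handle, is that these facts are being invoked for morphisms ``representable by (perfectly) proper algebraic spaces'' rather than for schemes, but the graph/base-change proofs of the cancellation law and of stability under composition and base change carry over verbatim to that setting.
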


This name is justified by the following fact.

\begin{lem}
\label{L:proper base change}
Assume we have a base changeable commutative diagram of spaces
as \eqref{E: basechangeable}.
Then there is a natural base change homomorphism
\begin{equation}
\label{E: base change isom}
\on{BC}^*_!: a^*f_!\to p_!b^*,
\end{equation}
which is an isomorphism if \eqref{E: basechangeable} is Cartesian.
If $p$ and $f$ are separated, it fits into the commutative diagram 
\[\begin{CD}
a^*f_!@>>> p_!b^*\\
@VVV@VVV\\
a^*f_*@>>>p_*b^*,
\end{CD}\]
where the bottom arrow is the natural adjunction.

In addition, given \eqref{E: two base changeable} with both squares base changeable, the base change homomorphism $a^*(fg)_!\to (pq)_!c^*$ is equal to the composition 
$$a^*(fg)_!=a^*f_!g_!\xrightarrow{\on{BC}_!^*} f_!b^*g_!\xrightarrow{\on{BC}_!^*} f_!g_!c^*=(fg)_!c^*.$$ 
Similarly, $(fg)^*a_!\to c_!(pq)^*$ is equal to the composition
$$(fg)^*a_!=g^*f^*a_!\xrightarrow{\on{BC}_!^*} g^*b_!f^*\xrightarrow{\on{BC}_!^*} c_!g^*f^*=c_!(fg)^*.$$
\end{lem}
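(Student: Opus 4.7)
\smallskip

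The plan is to construct $\on{BC}^*_!$ by factoring the base changeable square through its canonical Cartesian approximation. Let $C\times_X Y$ denote the fiber product, $p':C\times_X Y\to C$ and $b':C\times_XY\to Y$ the projections, and $h:D\to C\times_X Y$ the induced morphism, which is representable by (perfectly) proper algebraic spaces by hypothesis, with $p=p'\circ h$ and $b=b'\circ h$. First, I would invoke the classical proper base change isomorphism for the Cartesian square, $\on{BC}^{\on{cart}}:a^*f_!\xrightarrow{\sim}p'_!(b')^*$, which is standard in the six-functor formalism for spaces in our setting. Since $h$ is proper, one has $h_!=h_*$ together with the unit $\eta_h:\on{id}\to h_*h^*=h_!h^*$ of the $(h^*,h_*)$-adjunction. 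The desired base change map is then defined as the composition
\[
a^*f_!\xrightarrow{\on{BC}^{\on{cart}}}p'_!(b')^*\xrightarrow{p'_!(\eta_h(b')^*)}p'_!h_!h^*(b')^*=(p'h)_!(b'h)^*=p_!b^*.
\]
When \eqref{E: basechangeable} is Cartesian, $h$ is an isomorphism so $\eta_h$ is an isomorphism and $\on{BC}^*_!$ coincides with $\on{BC}^{\on{cart}}$, hence is an isomorphism.

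Next, for the compatibility with the adjunctions to $f_*$ and $p_*$ when $f,p$ are separated, I would reduce to the Cartesian case (where the statement is the standard compatibility of proper base change with the natural transformation $(-)_!\to(-)_*$) and then chase the factorization through $h$: the natural map $h_!h^*\to h_*h^*$ is an identity (again $h_!=h_*$), and the compatibility of the canonical transformation $(-)_!\to(-)_*$ with composition of proper pushforwards gives the commutative square. This step is essentially a formal diagram chase in the six-functor formalism, so I would only write it out as far as needed to verify the two possible factorizations of $a^*f_!\to p_*b^*$ agree.

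For the associativity statement under vertical composition in \eqref{E: two base changeable}, the plan is to reduce to the Cartesian case and then stack. Form the Cartesian squares sitting under each given square: let $T=C\times_X Y$ with projections $p',b'$, and let $S=T\times_Y Z$ with projections $q',c'$; then $S=C\times_X Z$ as well by transitivity of fiber products. The associativity of proper base change for Cartesian squares gives that $\on{BC}^{\on{cart}}$ for the outer rectangle equals the composition of the two inner $\on{BC}^{\on{cart}}$'s. Letting $h_1:D\to T$ and $h_2:E\to D\times_Y Z$ be the proper maps coming from base changeability of the two given squares, the induced proper map to the Cartesian outer rectangle factors as $E\to D\times_Y Z\to S$; inserting the two units $\eta_{h_1},\eta_{h_2}$ in the correct order, and using the projection/compatibility formula $p'_!h_{1!}\,h_1^*(b')^*q'_!\simeq (p'h_1)_!(b'h_1)^*q'_!$ with the analogous one for $h_2$, gives the desired equality. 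The same strategy, with the roles of $(-)_!$ and $(-)^*$ swapped, handles the dual statement about $(fg)^*a_!\to c_!(pq)^*$.

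The main obstacle is bookkeeping rather than conceptual: one has to verify that the various proper base change isomorphisms, projection formulas, and adjunction units compose coherently in the correct order, and that the properness hypotheses on the $h_i$ propagate to the stacked diagrams (which follows from Lemma \ref{L: criterion basechangeable}(2)). In the perfect setting, no new input is required since proper base change and the $(h^*,h_*)$-adjunction for perfectly proper morphisms are available as in \cite[\S A.1]{XZ}.
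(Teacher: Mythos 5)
The paper states Lemma~\ref{L:proper base change} without proof (the appendix defers to \cite[\S A.2]{XZ}), so there is no in-text argument to compare against. Your construction is correct and is the standard way to set this up: factor the base changeable square through its canonical Cartesian approximation $C\times_X Y$, apply genuine proper base change there, and then use that $h\colon D\to C\times_X Y$ is (perfectly) proper so that $h_!=h_*$ and the unit $\eta_h\colon\id\to h_*h^*=h_!h^*$ supplies the remaining map $p'_!(b')^*\to p'_!h_!h^*(b')^*=p_!b^*$; naturality is automatic since $h$ is canonical, and when the square is Cartesian $h$ is an isomorphism so $\eta_h$ is too. The compatibility with $(-)_!\to(-)_*$ and the vertical-composition identities reduce, as you say, to the Cartesian case together with the compatibility of $h_!=h_*$ and of $(-)_!\to(-)_*$ with composition of proper maps, plus Lemma~\ref{L: criterion basechangeable}(2) to ensure the stacked diagrams remain base changeable. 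One small point worth making explicit when you write this up: for the associativity statement you should note that the induced map $E\to S=C\times_X Z$ factors as $E\xrightarrow{h_2} D\times_Y Z\xrightarrow{h_1\times_Y\id_Z} S$, with both factors proper, so that inserting $\eta_{h_2}$ and $\eta_{h_1}$ in the indicated order and commuting $\eta_{h_1}$ past the Cartesian base change for the right square is the step that actually needs the base change compatibility with the unit — this is the only place where the bookkeeping is not purely formal. The dualization swapping $(-)_!$ and $(-)^*$ for the last claim is straightforward since proper base change for a Cartesian square is a genuine isomorphism and hence symmetric in the two versions.
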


\begin{definition}
\label{AD:correspondences}
Let $(X_i,\mF_i)$ for $i=1,2$ be two pairs, where $X_i$ are spaces, and $\mF_i\in \on{D}(X_i,E)$. A \emph{cohomological correspondence} $(C,u):(X_1,\mF_1)\to (X_2,\mF_2)$ is a space $C\xrightarrow{c_1\times c_2} X_1\times  X_2$, together with a morphism $u:c_1^*\mF_1\to c_2^!\mF_2$. We call $C$ the \emph{support} of $(C,u)$. For simplicity, $(C,u)$ is sometimes denoted by $C$ or by $u$. There is an obvious notion of (iso)morphisms between two cohomological correspondences $(C,u)$ and $(C',u')$ from $(X_1,\mF_1)$ to $(X_2,\mF_2)$. The set of isomorphism classes of cohomological correspondences from $(X_1,\mF_1)$ to $(X_2,\mF_2)$ supported on $C$ is denoted by $\on{Corr}_C((X_1,\mF_1),(X_2,\mF_2))$ and by definition
\[\on{Corr}_C((X_1,\mF_1),(X_2,\mF_2))\cong \Hom_C(c_1^*\mF_1,c_2^!\mF_2).\]

If $(C,u):(X_1,\mF_1)\to (X_2,\mF_2)$ and $(D,v):(X_2,\mF_2)\to (X_3,\mF_3)$ are two cohomological correspondences, we define their composition to be $(C\times_{X_2}D,v\circ u):(X_1,\mF_1)\to (X_3,\mF_3)$, with $v\circ u$ being the following composition
\[p^*c_1^*\mF_1\xrightarrow{p^*u} p^*c_2^!\mF_2\xrightarrow{\on{BC}^{*!}} q^!d_1^*\mF_2\xrightarrow{q^!v} q^!d_2^!\mF_3,\]
where $\on{BC}^{*!}$ is the base change homomorphism as in \eqref{E: base change isom}, and $p, q$ are the projections from $C\times_{X_2}D$ to $C$ and to $D$ respectively.
\end{definition}

Here are examples.

\begin{ex}
\label{Ex:examples of correspondences}
\begin{enumerate}
\item For a morphism $f: X\to Y$ and $\mF \in \on{D}(X)$, there is a cohomological correspondence $(X\xleftarrow{\id}X\xrightarrow{f} Y, u:\mF\to f^!f_!\mF)$ from $(X,\mF)$ to $(Y,f_!\mF)$, called the \emph{pushforward correspondence}, and denoted by $(\Ga_f)_!$ for simplicity.

\item For a morphism $f: X\to Y$ and $\mF \in \on{D}(Y)$, there is a natural cohomological correspondence ($Y\xleftarrow{f} X\xrightarrow{\id} X, u:= \mathrm{id}:f^*\mF\to f^*\mF)$ from $(Y,\mF)$ to $(X,f^*\mF)$, called the \emph{pullback correspondence}, and denoted by $\Ga_f^*$ for simplicity.

\item If $(C,u): (X_1,\mF_1)\to (X_2,\mF_2)$ is a cohomological correspondence, with $\mF_i\in \on{D}_c^b(X_i)$, then $(C,\bD u): (X_2, \bD \mF_2)\to (X_1,\bD \mF_1)$ is a cohomological correspondence, called the \emph{dual correspondence}.

\item Let $(X,\mF)$ be a pair with $\mF\in \on{D}_c^b(X)$. Then \eqref{E: VD counit} and \eqref{E: VD unit} give a cohomological correspondence and its dual
$$ \ev_\mF\in \on{Corr}_{X}((X\times X, \bD\mF\boxtimes \mF),(\on{pt}, E)),\quad  \coev_\mF\in \on{Corr}_{X}((\on{pt}, E), (X\times X, \mF\boxtimes\bD\mF)).$$

\item Assume that $k=\bar k$. Let $X_1\leftarrow C\to X_2$ be a correspondence. Then 
\begin{equation*}
\label{E:corr and cycle}
\begin{split}
\on{Corr}_C((X_1,E[d_1]), (X_2, \omega_{X_2}[d_2]))&=\Hom_{\on{D}^b_c(C)}(E[d_1],\omega_C[d_2])\\
&=\on{H}^{\on{BM}}_{d_1-d_2}(C).
\end{split}
\end{equation*}
So if $2\dim C=d_1-d_2$, $\on{Corr}_C((X_1,E[d_1]), (X_2, \omega_{X_2}[d_2]))$ is identified with the top Borel-Moore homology of $C$. In particular,
If $X_1,X_2,C$ are finite sets, regarded as zero dimensional spaces over $k$,  $\on{Corr}_C((X_1,E),(X_2,E))$ can be identified with the space of $E$-valued functions on $C$. 
\end{enumerate}
\end{ex}

Now assume that we have the following commutative diagram
\[\begin{CD}
X_1@<c_1<< C@>c_2>> X_2\\
@Vf_1VV@VVfV@VVf_2V\\
Y_1@<d_1<<D@>d_2>>Y_2
\end{CD}\]
and let $(C, u:c_1^*\mF_1\to c_2^!\mF_2)$ be a cohomological correspondence from $(X_1,\mF_1)$ to $(X_2,\mF_2)$. 
Assume that the left commutative square is base changeable (Definition \ref{D: basechangeable}), then we have 
\begin{equation}
\label{ASS:pushforward correspondence}
d_1^*(f_1)_!\mF_1\xrightarrow{\on{BC}^*_!} f_!c_1^*\mF_1\stackrel{f_!u}{\to} f_!c_2^!\mF_2\to d_2^!(f_2)_!\mF_2,
\end{equation}
where the last map is the natural adjunction.
By abuse of notation, we still denote the above map by $f_!(u)$. Then $(D,f_!(u))$ is a cohomological correspondence from $(Y_1,(f_1)_!\mF_1)$ to $(Y_2,(f_2)_!\mF_2)$. We call it the \emph{pushforward of the cohomological correspondence}.

In particular, if $(Y_1\xleftarrow{d_1} D\xrightarrow{d_2} Y_2)$ is $(S=S=S)$ for some base scheme $S$, and $c_1$ is proper (so the left square is base changeable), we get a homomorphism of  sheaves
\[f_!(u): (f_1)_!\mF_1\to (f_2)_!\mF_2.\]
More specifically, if $S=\Spec k$ is algebraically closed, and $c_1$ is proper, we obtain the induced map on cohomology
$\on{H}_c(u): \on{H}_c^*(X_1,\mF_1)\to \on{H}_c^*(X_2,\mF_2)$.

\begin{rmk}
If $f_1$ is separated and $f$ is representable by perfectly proper algebraic spaces (so the left square is base changeable), then $f_!(u)$ factors as the composition of the natural map $(f_1)_!\mF_1\to (f_1)_*\mF_1$ and
\[d_1^*(f_1)_*\mF_1\to f_*c_1^*\mF_1\to f_!c_2^!\mF_2\to d_2^!(f_2)_!\mF_2.\]
The latter map is the pushforward of cohomological correspondences considered in SGA 5. However, in most situations we are considering, $d_1$ is separated and $c_1$ is representable by perfectly proper algebraic spaces.
\end{rmk}

The pushforward cohomological correspondence is compatible with composition of cohomological correspondences.
\begin{lem}
\label{AL:pushforward compatible with composition}
Suppose we have a commutative diagram
\[
\begin{CD}
X_1@<c_1<< C@>c_2>> X_2@<c'_1<< C'@>c'_2>> X_3\\
@Vf_1VV@VVfV@VVf_2V@VVf'V@VVf_3V\\
Y_1@<d_1<<D@>d_2>>Y_2@<d'_1<<D'@>d'_2>>Y_3
\end{CD}
\]
with the first and the third square base changeable, $c_1,  c'_1$ proper and $d_1,d'_1$ separated, and let $u: c_1^*\calF_1 \to c_2^!\calF_2$ and $v: c'^*_1\calF_2 \to c'^!_2 \calF_3$ be cohomological correspondences from $(X_1, \calF_1)$ to $(X_2, \calF_2)$ and  from $(X_2, \calF_2)$ to $(X_3, \calF_3)$, respectively.
Set $ \widetilde C := C \times_{X_2} C'$ and $\widetilde D: = D \times_{Y_2} D'$ and let $\tilde f: \widetilde C \to \widetilde D$ denote the naturally induced morphism.
Then the following statements hold.
\begin{enumerate}
\item The right square in the diagram 
\[\begin{CD}
X_1@<c_1<<C@<p_C<<\tilde C \\
@Vf_1VV@VVfV@VV\tilde fV\\
Y_1@<d_1<<D@<P_D<<\tilde D
\end{CD}\]
is base changeable. Note that this ensures that $\tilde f_!(v \circ u)$ is well defined by Lemma \ref{L: criterion basechangeable} (1).
\item We have an equality of cohomological correspondences from $(Y_1, (f_1)_!\calF_1)$ to $(Y_3, (f_3)_!\calF_3)$ supported on $\widetilde D$:
\[
\tilde f_!(v\circ u) = f'_!(v) \circ f_!(u).
\]
In particular, if $c_1$ and $c'_1$ are representably by (perfectly) proper algebraic spaces, then
$$\on{H}_c(v\circ u) = \on{H}_c(v) \circ \on{H}_c(u):\on{H}^*_c(X_1\otimes\bar k, \calF_1) \to \on{H}^*_c(X_3\otimes \bar k, \calF_3).$$ 
\end{enumerate}
\end{lem}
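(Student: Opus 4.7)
The plan is to establish Part (1) as a direct application of the base change criterion, and to attack Part (2) by expanding both sides into an explicit zig-zag of base change maps and checking that they agree via the associativity of base change.

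\textbf{For Part (1).} I want to show the right-hand square $C \leftarrow \tilde C$, $D \leftarrow \tilde D$ with vertical maps $f, \tilde f$ is base changeable, i.e.\ the natural map $\tilde C \to C\times_D \tilde D$ is representable by perfectly proper algebraic spaces. First I compute $C\times_D \tilde D = C\times_D(D\times_{Y_2} D') \cong C\times_{Y_2} D'$. The induced map $\tilde C = C\times_{X_2} C' \to C\times_{Y_2} D'$ factors as the base change along $C\to X_2$ of the morphism $C' \to X_2\times_{Y_2} D'$ determined by $c'_1$ and $f'$. By the assumed base changeability of the third square, the latter is representable by perfectly proper algebraic spaces, hence so is its base change. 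Combining this with the base changeability of the first square and Lemma \ref{L: criterion basechangeable}(2) applied to the outer rectangle, the square with $c_1 p_C$ on top and $d_1 p_D$ on the bottom is base changeable, so $\tilde f_!(v\circ u)$ is well-defined.

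\textbf{For Part (2).} Both $\tilde f_!(v\circ u)$ and $f'_!(v)\circ f_!(u)$ are cohomological correspondences on $\tilde D$ from $(Y_1,(f_1)_!\mF_1)$ to $(Y_3,(f_3)_!\mF_3)$, so the assertion reduces to an equality of morphisms $(d_1 p_D)^*(f_1)_!\mF_1 \to (d'_2 p_{D'})^!(f_3)_!\mF_3$ in $\on{D}(\tilde D)$. I will unpack both sides using the definitions \eqref{ASS:pushforward correspondence} and Definition \ref{AD:correspondences}. Expanding $f'_!(v)\circ f_!(u)$ gives a chain of four maps through $p_D^*(f_2)_!d_2^!$-type terms using two separate applications of $\on{BC}^*_!$ (for $f$ and for $f'$) and one base change $\on{BC}^{*!}$ across the square $(D,p_D,p_{D'},D')\to Y_2$. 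Expanding $\tilde f_!(v\circ u)$ and substituting the definition of $v\circ u$ inside $\tilde f_!$ gives a chain involving a single $\on{BC}^*_!$ for $\tilde f$ applied to the composite $p_C^*u$ and $p_{C'}^! v$, with the intermediate $\on{BC}^{*!}$ coming from the square $(C,p_C,p_{C'},C')\to X_2$.

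The two chains are tied together by subdividing the big commutative 3D prism over the base squares into its constituent rectangles. The key identities are: the base change $\on{BC}^*_!$ for the composite $f\cdot p_C = p_D\cdot \tilde f$ (equivalently $f'\cdot p_{C'} = p_{D'}\cdot \tilde f$) factors as the successive base change along each square, by the second assertion of Lemma \ref{L:proper base change}; and the analogous factorization of $\on{BC}^{*!}$ as a composition of base changes across the two stacked squares $(\tilde C, p_{C'}, \tilde f, C')$ and $(C', c'_1, f', D')$. Applying these two factorizations to both expressions moves all the base change isomorphisms through the maps $p_C^*u$ and $p_{C'}^!v$ in a canonical way, yielding the same final composite. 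The functoriality of $*$- and $!$-pullbacks then gives the desired equality. The final claim about $\on{H}_c(v\circ u)=\on{H}_c(v)\circ\on{H}_c(u)$ follows by specializing to the case $(Y_1,Y_2,Y_3) = (\Spec\bar k,\Spec\bar k,\Spec\bar k)$.

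\textbf{Main obstacle.} The bookkeeping of the double base change in Part (2) is the only subtle point: one must carefully match the two decompositions of $\on{BC}^{*!}$ across the composite square $\tilde C \to \tilde D$ over $X_2\to Y_2$, with the zig-zag through the middle vertices $C$ and $C'$ versus through $D$ and $D'$. The compatibility assertion of Lemma \ref{L:proper base change} is exactly what reconciles these two routes, so no new geometric input is required beyond what is already in place.
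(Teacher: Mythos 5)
Part (1) of your argument is correct and complete: the identification $C\times_D\tilde D \cong C\times_{Y_2} D'$, the observation that $\tilde C \to C\times_{Y_2} D'$ is the base change along $C\to X_2$ of the morphism $C'\to X_2\times_{Y_2}D'$ expressing the base changeability of the third square, and the final appeal to Lemma~\ref{L: criterion basechangeable}(2) for the outer left rectangle all work exactly as you say.

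For Part (2), the overall strategy (unwind both sides using Definition~\ref{AD:correspondences} and \eqref{ASS:pushforward correspondence}, then chase the diagram) is the right one, but your description of the central technical step is imprecise, and that step is exactly where the substance of the lemma lies. The morphism $\on{BC}^{*!}$ appearing inside $v\circ u$ lives over the Cartesian square with vertices $\tilde C, C, C', X_2$, while the $\on{BC}^{*!}$ in $f'_!(v)\circ f_!(u)$ lives over the Cartesian square with vertices $\tilde D, D, D', Y_2$. These are the top and bottom faces of a cube, not two stacked squares, so the identity you need is a cube coherence that intertwines these two $\on{BC}^{*!}$ maps with: the base change $\on{BC}^*_!$ from Lemma~\ref{L:proper base change} across the face $\tilde C, C, \tilde D, D$; the adjunction $f_!c_2^!\to d_2^!(f_2)_!$ that occurs in the definition of $f_!(u)$; the $\on{BC}^*_!$ of the third square used inside $f'_!(v)$; and the wrong-way map $\tilde f_!p_{C'}^!\to p_{D'}^!f'_!$ across $\tilde C, C', \tilde D, D'$. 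Your reference to ``the analogous factorization of $\on{BC}^{*!}$ as a composition of base changes across two stacked squares'' does not describe this correctly, and you also omit the adjunction steps $f_!c_2^!\to d_2^!(f_2)_!$ and $f'_!c_2'^!\to d_2'^!(f_3)_!$ entirely from the accounting. Lemma~\ref{L:proper base change} on its own only gives the compatibility for $\on{BC}^*_!$ over stacked left squares; it does not supply the remaining cube commutativities, which have to be verified separately (by the standard unit/counit manipulations, using that the relevant faces are base changeable or Cartesian). So while nothing in your plan is \emph{wrong}, the part you declare to be ``the only subtle point'' and then wave through is the entire content of the proof, and as stated your claimed decomposition of $\on{BC}^{*!}$ would not directly yield the required equality.
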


Next, assume that we have the following commutative diagram
\[\begin{CD}
Y_1@<d_1<< D@>d_2>> Y_2\\
@Vf_1VV@VVfV@VVf_2V\\
X_1@<c_1<<C@>c_2>>X_2
\end{CD}\]
 Given $u:c_1^*\mF_1\to c_2^!\mF_2$ so that $(C,u)$ is a cohomological correspondence from $(X_1,\mF_1)$ to $(X_2,\mF_2)$, we would like to define the ``pullback of" $(C,u)$.
There are at least three situations where this is possible.
\begin{enumerate}
\item[(1)] Assume that $f$ is (cohomologically) smooth, equidimensional, of relative dimension $d_f$, and let us choose a trace map $\Tr_f: R^{2d_f}f_!E(d)\to E$ (see \cite[Definition A.1.18]{XZ}).
Then we have a cohomological correspondence $(D, f^*(u))$ from $(Y_1, f_1^*\calF_1)$ to $(Y_2, f_2^!\calF_2\langle-2d_f\rangle)$ defined as follows:
\begin{equation}
\label{ASS:smooth pullback correspondence} 
f^*(u)\colon d_1^*f_1^*\mF_1\cong f^*c_1^*\mF_1\xrightarrow{f^*u} f^*c_2^!\mF_2\cong f^!c_2^!\mF_2\langle-2d_f\rangle \cong d_2^!f_2^!\mF_2\langle-2d_f\rangle.
\end{equation}
Here and below, $\langle d\rangle$ denotes the shift and twist $[d](d/2)$ as usual.
We call this the \emph{smooth pullback of the cohomological correspondence}. Note that this depends on a choice of $\Tr_f$.

\item[(2)]
If the left inner square of the above diagram is Cartesian, then we have
\begin{equation}
\label{ASS:smooth pullback correspondence2} 
f^!(u)\colon d_1^*f_1^!\mF_1\xrightarrow{\on{BC}^{*!}} f^!c_1^*\mF_1\xrightarrow{f^!u}f^!c_2^!\mF_2= d_2^!f_2^!\mF_2.
\end{equation}
If in addition $f_1$ is cohomologically smooth, we can choose a trace map $\Tr_{f_1}$ to identify $f^*_1\simeq f^!_1\langle-2d_{f_1}\rangle$ and let $\Tr_f$ be the base change of $\Tr_{f_1}$. Then \eqref{ASS:smooth pullback correspondence2}  and \eqref{ASS:smooth pullback correspondence} coincide. 

\item[(3)] Similarly, if the right inner square of the above diagram is Cartesian, then we can define
\begin{equation}
\label{ASS:smooth pullback correspondence3} 
f^*(u)\colon d_1^*f_1^*\mF_1= f^*c_1^*\mF_1\xrightarrow{f^*u} f^*c_2^!\mF_2\xrightarrow{\on{BC}^{*!}} d_2^! f_2^*\mF_2.
\end{equation}
If $f$ is cohomologically smooth, we can choose $\Tr_f$ as above and let $\Tr_{f_2}$ be the base change of $\Tr_f$ to identify $f_2^*\cong f_2^!\langle-2d_{f}\rangle$. Then
\eqref{ASS:smooth pullback correspondence3}  and \eqref{ASS:smooth pullback correspondence} coincide.
\end{enumerate}

Similar to pushforward cohomological correspondences, pullback of cohomological correspondences is also compatible with composition of cohomological correspondences in certain situations.
\begin{lem}
\label{AL:pullback compatible with composition}
Suppose we have a commutative diagram
\[
\begin{CD}
Y_1@<d_1<<D@>d_2>>Y_2@<d'_1<<D'@>d'_2>>Y_3\\
@Vf_1VV@VVfV@VVf_2V@VVf'V@VVf_3V\\
X_1@<c_1<< C@>c_2>> X_2@<c'_1<< C'@>c'_2>> X_3
\end{CD}
\]
with $f$ cohomologically smooth equidimensional of relative dimension $d_f$. Assume that the third inner square is Cartesian.
Set $ \widetilde C := C \times_{X_2} C'$ and $\widetilde D: = D \times_{Y_2} D'$, and let $\tilde f:\tilde D\to\tilde C$ be the natural map. Then $\tilde f$ is the base change of $f$ along $\tilde C\to C$ and therefore is cohomologically smooth. We choose $\Tr_f$ and let $\Tr_{\tilde f}$ be the base change of $\Tr_f$.
Let $u: c_1^*\calF_1 \to c_2^!\calF_2$ and $v: c'^*_1\calF_2 \to c'^!_2 \calF_3$ be cohomological correspondences from $(X_1, \calF_1)$ to $(X_2, \calF_2)$ and  from $(X_2, \calF_2)$ to $(X_3, \calF_3)$, respectively. Then cohomological correspondence $\tilde f^*(v\circ u): (Y_1, f_1^*\calF_1) \to (Y_3, f_3^!\calF_3\langle 2d_f\rangle )$ is equal to
\[
(Y_1, f_1^*\calF_1) \xrightarrow{f^*(u)} \big(Y_2, f_2^!\calF_2\langle 2d_f\rangle \big) \xrightarrow{f'^!(v)} \big(Y_3, f_3^!\calF_3\langle 2d_f\rangle \big).
\]
\end{lem}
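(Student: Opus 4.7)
The plan is to unfold both sides as explicit composites in $\on{D}(\tilde D)$ and then match them factor-by-factor, thereby reducing the claim to a single compatibility of the base-change isomorphism $\on{BC}^{*!}$ under smooth pullback.

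First, I would expand $\tilde f^{*}(v\circ u)$. By Definition~\ref{AD:correspondences}, $v\circ u$ is the cohomological correspondence on $\tilde C := C\times_{X_2}C'$ whose underlying morphism is
\[
p^{*}c_1^{*}\calF_1 \xrightarrow{p^{*}u} p^{*}c_2^{!}\calF_2 \xrightarrow{\on{BC}^{*!}_{c_2,c'_1}} q^{!}c'^{*}_1\calF_2 \xrightarrow{q^{!}v} q^{!}c'^{!}_2\calF_3,
\]
where $p\colon\tilde C\to C$, $q\colon\tilde C\to C'$ are the projections. Since $\tilde f$ is the base change of $f$ along $p$ (the left square in the hypothesis diagram, once combined with the Cartesian third square, forces $\tilde D\cong \tilde C\times_{C}D$), the trace $\Tr_{\tilde f}$ is the base change of $\Tr_f$, giving a canonical isomorphism $\tilde f^{*}\cong \tilde f^{!}\langle -2d_f\rangle$ compatible with $f^{*}\cong f^{!}\langle -2d_f\rangle$. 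Applying formula~\eqref{ASS:smooth pullback correspondence} to $\tilde f$ yields $\tilde f^{*}(v\circ u)$ as $\tilde f^{*}$ of the above composite, followed by this conversion and the identity $\tilde f^{!}q^{!} = (q\tilde f)^{!} = (f'Q)^{!} = Q^{!}(f')^{!}$ (note $qf'\tilde f = c_1' f' Q$, using commutativity).

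Next, I would expand $(f')^{!}(v)\circ f^{*}(u)$. The factor $f^{*}(u)$ is given by~\eqref{ASS:smooth pullback correspondence}: it is $f^{*}u$ followed by $f^{*}\cong f^{!}\langle -2d_f\rangle$, landing in $d_2^{!}f_2^{!}\calF_2\langle -2d_f\rangle$. The factor $(f')^{!}(v)$ is given by~\eqref{ASS:smooth pullback correspondence2}, since the third inner square is Cartesian by hypothesis: it is $\on{BC}^{*!}_{c'_1,f_2}$ followed by $(f')^{!}v$. Their composition on $\tilde D$ introduces $\on{BC}^{*!}_{d_2,d'_1}$ in the middle. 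After using the canonical isomorphisms $P^{*}f^{*}\cong \tilde f^{*}p^{*}$ and $Q^{!}(f')^{!}\cong \tilde f^{!}q^{!}$ and absorbing the trace-conversion, the outer $u$- and $v$-factors of both composites match exactly. What remains is the commutativity of the square
\[
\xymatrix@C=40pt{
\tilde f^{*}p^{*}c_2^{!}\calF_2 \ar[r]^-{\tilde f^{*}\on{BC}^{*!}_{c_2,c'_1}} \ar[d]_{\cong} & \tilde f^{*}q^{!}c'^{*}_1\calF_2 \ar[d]^{\cong} \\
P^{*}d_2^{!}f_2^{!}\calF_2\langle -2d_f\rangle \ar[r]^-{\on{BC}^{*!}_{d_2,d'_1}} & Q^{!}(d'_1)^{*}f_2^{!}\calF_2\langle -2d_f\rangle,
}
\]
in which the verticals package together the trace-conversion $f^{*}\cong f^{!}\langle -2d_f\rangle$ and the Cartesian base-change $\on{BC}^{*!}_{c'_1,f_2}$ for $f'$.

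The commutativity of this square is a standard naturality of $\on{BC}^{*!}$ under a cube of composable squares: by the last paragraph of Lemma~\ref{L:proper base change} (applied in its $*!$-variant), $\on{BC}^{*!}$ for the outer rectangle factors as the composite of $\on{BC}^{*!}$'s for the two inner squares, and this identity is preserved under the smooth pullback by $\tilde f$ because $\Tr_{\tilde f}$ is the base change of $\Tr_f$. The main obstacle will be the careful bookkeeping of the shifts, twists, and multiple base-change isomorphisms through the three-dimensional diagram: although each individual identity is standard formalism, assembling them demands tracking precisely how the Cartesian hypothesis on the rightmost square propagates through $\tilde f$, and recognising that the identification $\tilde D\cong \tilde C\times_{C}D$ used for smooth pullback on one side matches the identification used to compose $f^{*}(u)$ with $(f')^{!}(v)$ on the other. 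Once this single compatibility square is verified, the equality $\tilde f^{*}(v\circ u) = (f')^{!}(v)\circ f^{*}(u)$ follows by pasting the matched factors.
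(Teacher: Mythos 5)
The paper does not include a proof of Lemma~\ref{AL:pullback compatible with composition}, so there is no written argument to compare against; assessing your proposal on its own merits, the overall strategy — unfold both composites on $\tilde D$, match the $u$- and $v$-factors using $\tilde f^{*}p^{*}=P^{*}f^{*}$ and $\tilde f^{!}q^{!}=Q^{!}(f')^{!}$, move the trace conversion past $u$ and $v$ by naturality, and reduce everything to a single compatibility square of base-change maps — is the right one and the reduction is carried out correctly. The observation that $\tilde D\cong\tilde C\times_{C}D$ (forced by the Cartesianness of the third inner square) so that $\tilde f$ is a base change of $f$ and $\Tr_{\tilde f}$ restricts correctly is exactly the point that makes the comparison of trace isomorphisms work.

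Two points of imprecision, neither fatal but worth flagging. First, the parenthetical ``$q f'\tilde f = c'_1 f' Q$'' is not a well-formed composite ($f'$ has source $D'$, not $\tilde C$); you mean simply $q\tilde f = f'Q$, which is part of the definition of $\tilde f$. Second, and more importantly, the reference to ``the last paragraph of Lemma~\ref{L:proper base change} applied in its $*!$-variant'' is not quite what you need. That paragraph concerns the $(\,^{*},\,_{!})$ base change $\on{BC}^{*}_{!}$ and its behaviour under horizontal pasting of base-changeable squares. What you need is a compatibility of the $(\,^{*},\,^{!})$ base change $\on{BC}^{*!}$ across a \emph{cube} with four Cartesian faces (left: $\tilde D,D,\tilde C,C$; right: $D',Y_2,C',X_2$; top: $\tilde D,D,Y_2,D'$; bottom: $\tilde C,C,X_2,C'$). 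Both composites are maps $P^{*}(c_2f)^{!}\to Q^{!}(f')^{!}(c'_1)^{*}$, one obtained by pasting the left and bottom faces, the other by pasting the top and right faces, and the needed identity is a pasting coherence for $\on{BC}^{*!}$ that is not literally the statement in the paper's Lemma~\ref{L:proper base change}. It \emph{is} standard (it follows from the base-change transformation being part of the coherent six-functor data, or can be checked directly by unwinding the adjunction definitions), but since this is genuinely the heart of the lemma, it should be either quoted from a precise reference or verified by hand rather than dispatched as ``standard naturality''. Also note that the left vertical of your compatibility square is purely the trace conversion plus the functorial identity $f^{!}c_2^{!}=d_2^{!}f_2^{!}$; only the right vertical involves $\on{BC}^{*!}_{c'_1,f_2}$, so the description ``the verticals package together'' both is slightly misleading.
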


\subsection{A category of correspondences}\label{SS: cat of corr}
Let $(\overleftarrow{c},\overrightarrow{c}): C\rightrightarrows X$ be a groupoid in spaces. In particular, there is the following diagram
\begin{equation*}
\label{E: diagram}
\xymatrix{
&& C\times_XC\ar_{\overleftarrow{p}}[ld]\ar^{\overrightarrow{p}}[dr]\ar^m[d]& &\\
&C\ar_{\overleftarrow{c}}[ld]\ar_{\overrightarrow{c}}[rd]& C\ar_{\overleftarrow{c}}[lld]\ar^{\overrightarrow{c}}[rrd]&C\ar^{\overleftarrow{c}}[ld]\ar^{\overrightarrow{c}}[rd]&\\
X&&X&&X
}
\end{equation*}
where $\overleftarrow{p},\overrightarrow{p}: C\times_XC\to C$ are the projections to the first and the second factor and $m: C\times_XC\to C$ is the multiplication map. There also exists the map
 $\delta: X\to C$ such that $(\overleftarrow{c},\overrightarrow{c})\circ \delta:X\to X\times X$ is the diagonal map. 

Now we assume that $m$ is proper and representable by an algebraic space, and $\delta$ is a closed embedding.
We define an $E$-linear category
$\on{D}^C(X)$ as follows: objects are sheaves on $X$; morphisms are 
\begin{equation}
\label{E:HomDC}
\Hom_{\on{D}^C(X)}(\mF_1,\mF_2)=\Hom_{\on{D}(C)}(\overleftarrow{c}^*\mF_1, \overrightarrow{c}^!\mF_2),
\end{equation}
of cohomological correspondences from $\mF_1$ to $\mF_2$ supported on $C$. 

The formalism of cohomological correspondences reviewed in the previous subsection allows one to define the composition law of the morphisms
\begin{eqnarray*}
&&\Hom_{\on{D}^C(X)}(\mF_1,\mF_2)\otimes\Hom_{\on{D}^C(X)}(\mF_2,\mF_3)\\
&=&\on{Corr}_C((X,\mF_1),(X,\mF_2))\otimes \on{Corr}_C((X,\mF_2),(X,\mF_3))\\
&\to& \on{Corr}_{C\times_XC}((X,\mF_1),(X,\mF_3)) \\
&\xrightarrow{m_!}& \on{Corr}_C((X,\mF_1),(X,\mF_3))\\
&=&\Hom_{\on{D}^C(X)}(\mF_1,\mF_3).
\end{eqnarray*}
It contains a full subcategory $\on{P}^C(X)$ consisting of those $\mF\in\on{D}^C(X)$ that are perverse on $X$.

The motivation to consider the above category is as follows. 
Let $f:X\to Y$ be a proper representable morphism and let $C=X\times_YX$, with $\overleftarrow{c}$ and $\overrightarrow{c}$ the two projections.
By its very definition, 
\begin{lem}
\label{L: fully faithful}
The functor $f_!: \on{D}(X)\to\on{D}(Y)$ factors as
\[\on{D}(X)\to \on{D}^C(X)\to \on{D}(Y)\]
and the second arrow is fully faithful. 
\end{lem}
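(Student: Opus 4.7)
The functor $\on{D}(X)\to\on{D}^C(X)$ is tautological on objects; on morphisms, it sends $u\colon\mF\to\mG$ to the cohomological correspondence supported on $C$ obtained by pushing $u$, viewed as the trivial correspondence on $X=X=X$, forward along the closed embedding $\delta\colon X\hookrightarrow C$. To define the second functor $F\colon\on{D}^C(X)\to\on{D}(Y)$, I would set $F(\mF)=f_!\mF$ on objects and send a morphism $u\in\Hom_{\on{D}(C)}(\overleftarrow{c}^*\mF_1,\overrightarrow{c}^!\mF_2)$ to the pushforward of cohomological correspondences \eqref{ASS:pushforward correspondence} along the diagram
\[
\begin{CD}
X @<\overleftarrow{c}<< C @>\overrightarrow{c}>> X \\
@VfVV @VVf\overleftarrow{c}V @VVfV \\
Y @= Y @= Y,
\end{CD}
\]
which makes sense because both outer squares are base-changeable: the maps $\overleftarrow{c}$ and $\overrightarrow{c}$ are proper and representable, being base changes of $f$. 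Compatibility of $F$ with composition in $\on{D}^C(X)$ then follows from Lemma \ref{AL:pushforward compatible with composition} together with the observation that the multiplication $m\colon C\times_X C\to C$ lies above $\id_Y$. It is also immediate from the construction that the composite $\on{D}(X)\to\on{D}^C(X)\to\on{D}(Y)$ coincides with $f_!$.

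For fully faithfulness of $F$, the plan is to compute both Hom-spaces via adjunction and base change. The adjoint pair $(\overleftarrow{c}^*,\overleftarrow{c}_*)$ yields
\[
\Hom_{\on{D}^C(X)}(\mF_1,\mF_2)=\Hom_{\on{D}(C)}(\overleftarrow{c}^*\mF_1,\overrightarrow{c}^!\mF_2)\cong\Hom_{\on{D}(X)}(\mF_1,\overleftarrow{c}_*\overrightarrow{c}^!\mF_2).
\]
Taking right adjoints of the proper base-change isomorphism $f^*f_!\cong\overrightarrow{c}_!\overleftarrow{c}^*$ (for the symmetrically-labeled Cartesian square with $\overrightarrow{c}$ on the left and $\overleftarrow{c}$ on the top) produces
\[
\overleftarrow{c}_*\overrightarrow{c}^!\cong f^!f_*=f^!f_!,
\]
the last equality using the properness of $f$. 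Combined with the adjunction $(f_!,f^!)$, this yields a chain of natural isomorphisms
\[
\Hom_{\on{D}^C(X)}(\mF_1,\mF_2)\cong\Hom_{\on{D}(X)}(\mF_1,f^!f_!\mF_2)\cong\Hom_{\on{D}(Y)}(f_!\mF_1,f_!\mF_2).
\]

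The main technical task—and the part that will require the most care—is to verify that this composite natural isomorphism, obtained via abstract adjunction formalism, agrees with the map on Hom-spaces induced by $F$ via pushforward of cohomological correspondences. This is a diagram chase unwinding the definition of the base-change morphism \eqref{E: base change isom} together with the explicit recipe \eqref{ASS:pushforward correspondence}, and is formal in nature but fiddly. Once this compatibility is established, the fully faithfulness of the second functor in the factorization follows immediately.
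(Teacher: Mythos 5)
Your proof is correct and takes essentially the same approach as the paper's, which obtains the same isomorphism $\Hom_{\on{D}(C)}(\overleftarrow{c}^*\mF_1,\overrightarrow{c}^!\mF_2)\cong\Hom_{\on{D}(Y)}(f_!\mF_1,f_!\mF_2)$ from proper base change and then simply asserts that it is compatible with compositions. Your version spells out the adjunction chain $\overleftarrow{c}_*\overrightarrow{c}^!\cong f^!f_*=f^!f_!$ and the explicit construction of the second functor via pushforward of cohomological correspondences, details the paper leaves implicit.
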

\begin{proof}
The first statement is clear.
The second statement follows from the canonical isomorphism (by the proper base change)
\[\Hom(\overleftarrow{c}^*\mF_1,\overrightarrow{c}^!\mF_2)=\Hom(f_!\mF_1,f_!\mF_2)\]
which is compatible with the compositions.
\end{proof}

\begin{rmk}
\label{R: DC enh}
(1) 
One reason to make the above definitions is that in some cases even when the stack $Y$ (and therefore $\on{D}(Y)$) is ill-behaved, the category $\on{D}^C(X)$ can still be defined appropriately.

(2) 
The category $\on{D}^C(X)$ introduced as an $E$-linear category as above is sufficient for the applications in this article. However, let us indicate how to define it as an $\infty$-category, at least in the case where both $\overleftarrow{c}$ and $\overrightarrow{c}$ are (perfectly) proper morphisms of separated (perfectly) finite type schemes over a finite or algebraically closed field $k$. Let us denote $\Shv(-)$ the natural $\infty$-categorical enhancement of $\on{D}(-)$. If $X$ is separated and (perfectly) of finite type over a field of finite $\ell$-cohomological dimension, $\Shv(X,E)$ is compactly generated with the compact objects being constructible complexes, when $E=\bF_\ell,\bZ_\ell, \bQ_\ell$, etc.

Notice that  $\on{D}^C(X)$ is a monadic description of the colimit of the diagram in the $2$-category of $E$-linear categories
\[\on{D}(C\times_XC)\ \substack{\longrightarrow\\[-1em] \longrightarrow \\[-1em] \longrightarrow}\ \on{D}(C)\ \substack{\longrightarrow\\[-1em] \longrightarrow} \ \on{D}(X),\]
with the functors given by proper pushforwards.
Then it is clear that one can define an $\infty$-category $\Shv^C(X)$ as the colimit of
\[\cdots \substack{\longrightarrow\\[-1em] \longrightarrow \\[-1em] \longrightarrow\\[-1em] \longrightarrow}\ \Shv(C\times_XC)\ \substack{\longrightarrow\\[-1em] \longrightarrow \\[-1em] \longrightarrow}\ \Shv(C)\ \substack{\longrightarrow\\[-1em] \longrightarrow} \ \Shv(X),\]
with respect to proper push forwards. 
There is a monadic description of $\Shv^C(X)$ in terms of $\Shv(X)$ (by the Barr-Beck-Lurie theorem). In particular, the hom spaces of its homotopy category can be described in terms of cohomological correspondences: Let $\mF_1,\mF_2\in \Shv(X)$, considered as objects in $\Shv^C(X)$ via the natural functor $\Shv(X)\to \Shv^C(X)$. Then 
$$\Hom_{\on{h}\Shv^C(X)}(\mF_1,\mF_2)=R\Hom_{\on{D}(C)}(\overleftarrow{c}^*\mF_1,\overrightarrow{c}^!\mF_2).$$
If $C=X\times_YX$ for some proper surjective morphism $f:X\to Y$, then $f_!:\Shv(X)\to \Shv(Y)$ induces an equivalence of $\infty$-categories $\Shv^C(X)\simeq \Shv(Y)$.
\end{rmk}

\end{document}